\newtheorem{thm}{Theorem}[subsection]
\newtheorem{cor}[thm]{Corollary}
\newtheorem{lem}[thm]{Lemma}
\newtheorem{prop}[thm]{Proposition}
\theoremstyle{definition}
\newtheorem{defn}[thm]{Definition}
\theoremstyle{remark}
\newtheorem{rem}[thm]{Remark}
\numberwithin{equation}{subsection}
\numberwithin{figure}{section}
\newcommand{\C}{{\mathbb C}}
\newcommand{\D}{{\mathbb D}}
\newcommand{\T}{{\mathbb T}}
\newcommand{\R}{{\mathbb R}}
\newcommand{\Mop}{\mathbf{M}}
\newcommand{\Vop}{\mathbf{\Lambda}}
\newcommand{\Lop}{{\mathbf L}}
\newcommand{\Pop}{{\mathbf P}}
\newcommand{\Hop}{{\mathbf H}}
\newcommand{\calS}{\mathcal{S}}
\newcommand{\calH}{{\mathcal H}}
\newcommand{\calD}{{\mathcal D}}
\newcommand{\calB}{{\mathcal B}}
\newcommand{\calK}{{\mathcal K}}
\newcommand{\calQ}{{\mathcal Q}}
\newcommand{\calE}{{\mathcal E}}
\newcommand{\hDelta}{\varDelta}
\newcommand{\hdelta}{\eta}
\newcommand{\frakF}{\mathfrak{T}}
\newcommand{\kernel}{\mathrm{k}}
\newcommand{\diff}{{\mathrm d}}
\newcommand{\diffs}{\mathrm{ds}}
\newcommand{\diffA}{\mathrm{dA}}
\newcommand{\imag}{{\mathrm i}}
\newcommand{\Ordo}{\mathrm{O}}
\newcommand{\e}{\mathrm e}
\renewcommand{\Re}{\operatorname{Re}}
\renewcommand{\Im}{\operatorname{Im}}
\DeclareSymbolFont{cyrletters}{OT2}{wncyr}{m}{n}
\DeclareMathSymbol{\Rfun}{\beta}{cyrletters}{"17}
\DeclareFontFamily{U}{rcjhbltx}{}
\DeclareFontShape{U}{rcjhbltx}{m}{n}{<->rcjhbltx}{}
\DeclareSymbolFont{hebrewletters}{U}{rcjhbltx}{m}{n}
\DeclareMathSymbol{\aleph}{\mathord}{hebrewletters}{39}
\DeclareMathSymbol{\beth}{\mathord}{hebrewletters}{98}
\DeclareMathSymbol{\gimel}{\mathord}{hebrewletters}{103}
\DeclareMathSymbol{\daleth}{\mathord}{hebrewletters}{100}
\DeclareMathSymbol{\lamed}{\mathord}{hebrewletters}{108}
\DeclareMathSymbol{\mem}{\mathord}{hebrewletters}{109}
\DeclareMathSymbol{\ayin}{\mathord}{hebrewletters}{96}
\DeclareMathSymbol{\tsadi}{\mathord}{hebrewletters}{118}
\DeclareMathSymbol{\qof}{\mathord}{hebrewletters}{113}
\DeclareMathSymbol{\shin}{\mathord}{hebrewletters}{152}
\DeclareMathSymbol{\memschloss}{\mathord}{hebrewletters}{77}
\DeclareMathSymbol{\nunlange}{\mathord}{hebrewletters}{78}
\DeclareMathSymbol{\vav}{\mathord}{hebrewletters}{79}
\DeclareMathSymbol{\tet}{\mathord}{hebrewletters}{84}
\DeclareMathSymbol{\tsadiklange}{\mathord}{hebrewletters}{90}
\DeclareMathSymbol{\He}{\mathord}{hebrewletters}{104}
\DeclareMathSymbol{\kaf}{\mathord}{hebrewletters}{107}
\DeclareMathSymbol{\nun}{\mathord}{hebrewletters}{110}
\DeclareMathSymbol{\pei}{\mathord}{hebrewletters}{112}
\DeclareMathSymbol{\resh}{\mathord}{hebrewletters}{114}
\DeclareMathSymbol{\samekh}{\mathord}{hebrewletters}{115}
\DeclareMathSymbol{\Het}{\mathord}{hebrewletters}{116}
\DeclareMathSymbol{\vav}{\mathord}{hebrewletters}{119}
\DeclareMathSymbol{\het}{\mathord}{hebrewletters}{120}
\DeclareMathSymbol{\yod}{\mathord}{hebrewletters}{121}
\DeclareMathSymbol{\zayin}{\mathord}{hebrewletters}{122}
\newcommand{\indset}{\Het}
\newcommand{\indsett}{\tsadiklange}
\begin{document}

\title[Off-spectral analysis of Bergman kernels]
{Off-spectral analysis of Bergman kernels}

\author[Hedenmalm]{Haakan Hedenmalm}

\address{Hedenmalm: Department of Mathematics
\\
The Royal Institute of Technology
\\
S -- 100 44 Stockholm
\\
SWEDEN}

\email{haakanh@math.kth.se}

\author[Wennman]{Aron Wennman}

\address{Wennman: Department of Mathematics
\\
The Royal Institute of Technology
\\
S -- 100 44 Stockholm
\\
SWEDEN}

\email{aronw@math.kth.se}



\date{\today}
\begin{abstract}
The asymptotic analysis of Bergman kernels with respect to exponentially 
varying measures near emergent interfaces has attracted recent  
attention. Such interfaces typically occur when the associated limiting 
Bergman density function vanishes on a portion of the plane, 
\emph{the off-spectral region}. This type of behavior is observed when
the metric is negatively curved somewhere, or when we study partial Bergman 
kernels in the context of positively curved metrics. 
In this work, we cover these two situations in a unified way, for 
exponentially varying weights on the complex plane. 
We obtain a uniform asymptotic expansion of the
{\em coherent state of depth} $n$ rooted at an off-spectral point, 
which we also refer to as the {\em root function} at the point in
question.
The expansion is valid in the entire off-spectral 
component containing the root point, and protrudes into the spectrum as well. 
This allows us to obtain error function transition behavior of the density 
of states along the smooth interface. Previous work on asymptotic 
expansions of Bergman kernels is typically local, and valid only in the 
bulk region of the spectrum, which contrasts with our non-local expansions.  
\end{abstract}
\maketitle

\section{Introduction}
\subsection{Bergman kernels and emergent interfaces}
This article is a companion to our recent work \cite{HW} on the structure
of planar orthogonal polynomials. 
We will make frequent use of methods developed there, and recommend that
the reader keep that article available for ease of reference.

The study of Bergman kernel asymptotics has by now a sizeable literature.
The majority of the contributions have the flavor of local asymptotics 
near a given point $w_0$, under a positive curvature condition. 
However, in the study of partial Bergman kernels for the subspace
of all functions vanishing to a given order at the point $w_0$, the assumption
of vanishing has the effect of introducing a negative point mass for the 
curvature form at $w_0$.
In addition to the negative curvature which comes from considering partial
Bergman kernels defined by vanishing, we allow for the direct
effect of patches of negatively curved geometry. 
Around the set of negative curvature, a {\em forbidden region}
(or {\em off-spectral set}) emerges. This forbidden region is typically larger
than the set of actual negative curvature, and may consist of several 
connectivity components. 
Recently, the asymptotic behavior of Bergman kernels near the 
interface at the edge of the forbidden region has attracted considerable 
attention. 
In this work, we intend to investigate this in the fairly general setting of 
exponentially varying weights in the complex plane. 
The restriction of the Bergman kernel to the diagonal gives us the density of 
states, which drops steeply at the interface.
Indeed, in the forbidden region the density of states vanishes asymptotically, 
with exponential decay. One of our main results is that
the density of states across the interface converges to the error function 
in a blow-up, provided the interface is smooth. 

The key to obtaining the above-mentioned result is in fact our main result. 
It concerns the {\em expansion of the coherent state 
$\kernel_n(z,w_0)$ of depth $n$ at a given 
off-spectral point $w_0$}. This is the renormalized reproducing kernel 
function at the point $w_0$ for the Bergman space defined by vanishing 
to order $n$ at $w_0$.
When $n=0$, the coherent state of depth $0$ is just the normalized 
Bergman kernel 
$K(w_0,w_0)^{-\frac12}K(z,w_0)$. An important feature of the asymptotic expansion
is that $z$ and $w_0$ are allowed to be macroscopically separated. Such 
truly off-diagonal expansions have, to the best of our knowledge, not appeared 
elsewhere. 
It is easy to see that the coherent state of order $n=0,1,2,\ldots$,  
at $w_0$ 
form an orthonormal basis for the Bergman space, which gives an 
expansion of the density of states,
which leads to the error function asymptotics. Similarly, if we want to 
handle partial Bergman kernels
given by vanishing to order $n_0$ at $w_0$, we expand in the basis 
given by the coherent states of depth $n= n_0,n_0+1,\ldots$.

\subsection{Coherent states and elementary potential theory}
We now introduce the objects of study. The Bergman space $A^2_{mQ}$ is
defined as the collection of all entire functions $f$ in with finite 
weighted $L^2$-norm
\[
\|f\|_{mQ}^2:=\int_{\C}\lvert f(z)\rvert^2\e^{-2mQ(z)}\diffA(z)<+\infty,
\]
where $\diffA$ denotes the planar area element normalized so that the unit disk 
$\D$ has unit area, and where $Q$ is a potential with certain growth and 
regularity properties (see Definition~\ref{def:adm}).
We denote the reproducing kernel for $A^2_{mQ}$ by $K_m$, and for a given point
$w_0\in\C$, we consider the coherent state (normalized Bergman kernel)
\begin{equation}\label{eq:normalized-ker}
\kernel_{m,w_0}(z):=K_{m}(w_0,w_0)^{-\frac12} K_{m}(z,w_0),
\end{equation}
which has norm $1$ in $A^2_{mQ}$. There is a notion of the \emph{spectrum} 
$\calS$, also called the \emph{spectral droplet}.  This is the closed set 
defined in terms of the following obstacle problem. 
Let $\mathrm{SH}(\C)$ denote the cone of all subharmonic functions
on the plane $\C$, and consider the function
\[
\hat Q(z):=\sup\big\{q(z)\,:\,\, q\in\mathrm{SH}(\C),\,\,\text{and}\,\,\,
q\le Q \,\,\,\text{on}\,\,\,\C\big\}.
\]
Whenever $Q$ is $C^{1,1}$-smooth and has some modest growth at infinity, 
it is known that $\hat Q\in C^{1,1}$ as well,
and it is a matter of definition that $\hat Q\le Q$ pointwise (see, e.g.,
\cite{HM}).
Here, $C^{1,1}$ denotes the standard smoothness class of differentiable 
functions with Lipschitz continuous first order partial derivatives.
We define the \emph{spectrum} (or the \emph{spectral droplet}) as the contact 
set
\begin{equation}
\calS:=\big\{z\in\C\,:\,\hat Q(z)=Q(z)\big\}.
\label{eq:calS}
\end{equation}
The {\em forbidden region} is the complement $\calS^c=\C\setminus \calS$.

We will need these notions in the context of partial Bergman kernels as well.
For a non-negative integer $n$ and a point $w_0\in\C$, 
we consider the subspace $A^2_{mQ,n,w_0}$ of $A^2_{mQ}$, consisting of those 
functions that vanish to order at least $n$ at $w_0$. It may happen for
some $n$ that this space is trivial, for instance when the potential $Q$
has logarithmic growth only, because then the space $A^2_{mQ}$ consists 
of polynomials of a bounded degree.
We denote its reproducing kernel by $K_{m,n,w_0}$, and observe that 
$K_{m,0,w_0}=K_m$. We shall need also the {\em coherent state 
of depth $n$ at $w_0$} (or the {\em root function} of order $n$), 
denoted $\kernel_{m,n,w_0}$, which is the unique solution to the optimization 
problem
\[
\max\big\{\Re f^{(n)}(w_0):\,\,f\in A^2_{mQ,n,w_0},\,\,\|f\|_{mQ}\le1\big\}, 
\]
provided the maximum is positive, in which case the optimizer has 
norm $\|f\|_{mQ}=1$. In the remaining case, the maximum equals
$0$, and either only $f=0$ is possible, or there are several competing
optimizers, simply because we may multiply the function by unimodular 
constants and obtain alternative optimizers. In both remaining instances 
we declare that $\kernel_{m,n,w_0}=0$.
When nontrivial, the root function (or coherent state) of order $n$ at $w_0$ is 
connected with the reproducing kernel $K_{m,n,w_0}$:
\begin{equation}
\label{eq:normalized-parker}
\kernel_{m,n,w_0}(z)=
\lim_{\zeta\to w_0}K_{m,n,w_0}(\zeta,\zeta)^{-1/2}K_{m,n,w_0}(z,\zeta),
\end{equation}
where the point $\zeta$ should approach $w_0$ not arbitrarily but in a 
fashion such that the limit exists and has positive $n$-th derivative at $w_0$. 
The root function $\kernel_{m,n,w_0}$ will play a key role in our analysis, 
similar to that of the orthogonal polynomials in the context of polynomial 
Bergman kernels. The root functions $\kernel_{m,n,w_0}$ 
all have norm equal to $1$
in $A^2_{mQ}$, except when they are trivial and have norm $0$. 
As a result of the relation \eqref{eq:normalized-parker},
we may alternatively call the root function $\kernel_{m,n,w_0}$ a 
\emph{normalized partial Bergman kernel}. 
The spectral droplet 
associated to a family of partial Bergman kernels of the above type is 
defined in Subsection~\ref{ss:obst} in terms of an obstacle problem, and
we briefly outline how this is done.
For $0\le\tau<+\infty$, let $\mathrm{SH}_{\tau,w_0}(\C)$ denote the convex set
\[
\mathrm{SH}_{\tau,w_0}(\C)=\big\{q\in\mathrm{SH}\,(\C)\,:\, 
q(z)\le\tau\log\lvert z-w_0\rvert+\Ordo(1)\,\;\text{as }\, z\to w_0\big\},
\]
so that for $\tau=0$ we recover $\mathrm{SH}(\C)$. We consider the 
corresponding obstacle problem
\begin{equation}
\label{eq:obst-function-tau}
\hat{Q}_{\tau,w_0}(z)=\sup\big\{q(z)\;:\; q\in\mathrm{SH}_{\tau,w_0}(\C),\, 
q\le Q\,\,\text{on}\,\,\C\big\},
\end{equation}
and observe that $\tau\mapsto\hat{Q}_{\tau,w_0}$ is monotonically decreasing
pointwise.
We define a family of spectral droplets as the coincidence sets 
\begin{equation}
\calS_{\tau,w_0}=\big\{z\in\C\,:\,Q(z)=\hat{Q}_{\tau,w_0}(z)\big\}.
\label{eq:calStau}
\end{equation}
Due to the monotonicity, the droplets $\calS_{\tau,w_0}$ get smaller as $\tau$
increases, starting from $\calS_{0,w_0}=\calS$ for $\tau=0$. 
The \emph{partial Bergman density} 
\[
\rho_{m,n,w_0}(z):=m^{-1}K_{m,n,w_0}(z,z)\,\e^{-2mQ(z)},\qquad z\in\C,
\]
may be viewed as the normalized local dimension of the space 
$A^2_{mQ,n,w_0}(\C)$, and, in addition, it has the interpretation as the 
intensity of a corresponding (possibly infinite) Coulomb gas. In the
case $n=0$ we omit the word ``partial'' and speak of the 
\emph{Bergman density}. It is known
that in the limit as $m,n\to+\infty$ with $n=m\tau$,
\[
\rho_{m,n,w_0}(z)\to 2\hDelta Q(z)\,1_{\calS_{\tau,w_0}}(z),
\]
in the sense of convergence of distributions. In particular, $\hDelta Q\ge0$ 
holds a.e. on $\calS$. Here, we write $\hDelta$ for differential operator
$\partial\bar\partial$, which is one quarter of the usual Laplacian.
The above convergence reinforces our understanding
of the droplets $\calS_{\tau,w_0}$ as spectra, in the sense that the Coulomb 
gas may be thought to model eigenvalues (at least in the finite-dimensional 
case). 
The \emph{bulk} of the spectral droplet $\calS_{\tau,w_0}$ is the set
\[
\{z\in\mathrm{int}(\calS_{\tau,w_0}):\,
\hDelta Q(z)>0\},
\]  
where ``int'' stands for the operation of taking the interior.

\subsection{Further background on Bergman kernel expansions}
Our motivation for the above setup originates with the theory of random 
matrices, specifically the {\em random normal matrix ensembles}. 
We should mention that an analogous situation occurs in the study of 
complex manifolds. 
The Bergman kernel then appears in the study of spaces of 
$L^2$-integrable global holomorphic sections of $L^m$, where $L^m$ is a high 
tensor power of a holomorphic line bundle $L$ over the manifold,
endowed with an hermitian fiber metric $h$. 
If $\{U_i\}_i$ is a coordinate system on a manifold, 
then a holomorphic section $s$ to a line bundle $L$ can be written in 
local coordinates as $s=s_i e_i$, where $e_i$ are local basis elements for $L$ 
and $s_i$ are holomorphic functions. 
The pointwise norm of a section $s$ of $L^m$ may be then be written as 
$\lvert s\rvert_{h^m}^2=\lvert s_i\rvert^2\e^{-m\phi_i}$ on $U_i$, 
for some smooth real-valued functions $\phi_i$.
Along with a volume form on the base manifold, this defines an $L^2$ space 
which shares many characteristics with the spaces considered here.

The asymptotic behavior of Bergman kernels has been the subject of intense 
investigation. However, the understanding has largely been limited to the 
analysis of the kernel inside the bulk of the spectrum, in which case the 
kernel enjoys a full {\em local} asymptotic expansion. 
The pioneering work on Bergman kernel asymptotics begins with the efforts 
by H\"ormander \cite{Horm} and Fefferman \cite{Fefferman}. Developing
further the microlocal approach of H\"ormander, Boutet de Monvel and
Sj\"ostrand \cite{BdMSjostrand} obtain a near-diagonal expansion of the 
Bergman kernel close to the boundary of the given domain.
Later, in the context of K\"ahler geometry, the influential 
{\em peak section method} was introduced by Tian \cite{Tian}. 
His results were refined further by Catlin and Zelditch 
\cite{Catlin, Zelditch}, while the connection with microlocal analysis was 
greatly simplified in the more recent work by Berman, Berndtsson, and 
Sj{\"o}strand \cite{BBS}.
A key element of all these methods is that the kernel is determined by the
local geometry around the given point.
This property is absent when we consider the kernel near an off-spectral point
or near a boundary point of the spectral droplet. 

In the recent work \cite{HW}, we analyze the boundary behavior of polynomial
Bergman kernels, for which the corresponding spectral droplet is compact,
connected, and has a smooth Jordan curve as boundary.  
The analysis takes the path via a full asymptotic expansion of the 
orthogonal polynomials, valid off a sequence of increasing compacts which
eventually fill the droplet. 
By expanding the polynomial kernel in the orthonormal basis provided by the 
orthogonal polynomials, the error function asymptotics emerges along smooth 
spectral boundaries. 

The appearance of an interface for partial Bergman kernels in higher 
dimensional settings 
and in the context of complex manifolds has been observed more than once, 
notably in the work by Shiffman and Zelditch \cite{Shiffman-Zelditch} 
and by Pokorny and Singer \cite{pokorny-singer}. 
That the error function governs the transition behavior across the interface
was observed later in several contexts. For instance, 
in \cite{ross-singer}, Ross and Singer investigate the partial Bergman 
kernels associated to spaces of holomorphic sections vanishing along a 
divisor, and obtain error function transition behavior under the assumption 
that the set-up is invariant under a holomorphic $S^1$-action. This result was 
later extended by Zelditch and Zhou \cite{ZZ2}, in the context of 
$S^1$-symmetry.
More recently, Zelditch and Zhou \cite{ZZ} obtain the same transition 
for so-called {\em spectral partial Bergman kernels}, defined in terms of 
the Toeplitz quantization of a general smooth Hamiltonian. 

Our methods for obtaining asymptotic expansions of coherent states centered at 
off-spectral points do not easily extend to the higher complex dimensional 
setting. It appears plausible, however, that our analysis would apply when 
the complex plane $\C$ is replaced by a compact Riemann surface with one 
or possibly several punctures.

\begin{figure}
\centering
  \includegraphics[width=.7\linewidth]{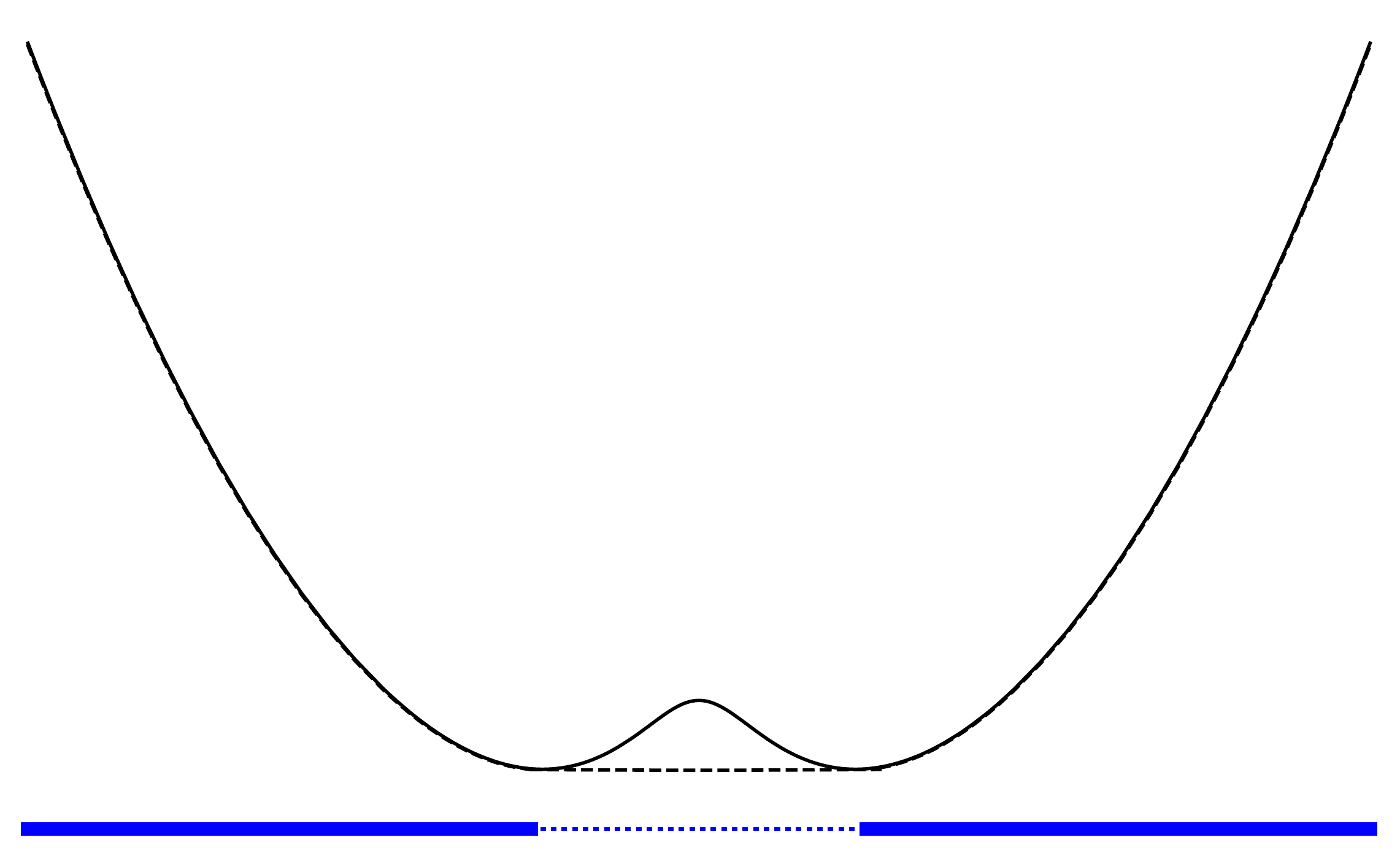}
\caption{
Illustration of the spectral droplet corresponding to the potential 
$Q(z)=\lvert z\rvert^2-\log(a+\lvert z\rvert^2)$, with $a=0.04$. 
The spectrum is illustrated with a thick line, 
and appears as the contact set between $Q$ (solid) and the solution 
$\hat{Q}$ to the 
obstacle function (dashed).}
\label{fig:obst-sing}
\end{figure}

\subsection{Off-spectral and off-diagonal asymptotics of 
coherent states}
The main contribution of the present work, put in the planar context, is a
non-local asymptotic expansion of coherent states centered at an 
off-spectral point. 
For ease of exposition, we begin with a 
version that requires as few prerequisites as possible for the formulation.
We denote by $Q(z)$ an \emph{admissible potential}, by which we mean the 
following:

\begin{enumerate}[(i)]
\item 
$Q:\C\to\R$ is $C^2$-smooth, and has sufficient growth at infinity:
\[
\tau_Q:=\liminf_{\lvert z\rvert\to+\infty}\frac{Q(z)}{\log \lvert z\rvert}>0.
\]

\item $Q$ is real-analytically smooth and strictly subharmonic in a 
neighborhood of $\partial\calS$, where $\calS$ is the contact set of 
\eqref{eq:calS},

\item there exists a bounded component $\Omega$ of the complement $\calS^c
=\C\setminus\calS$ which is simply connected, and has real-analytically 
smooth Jordan curve boundary.
\end{enumerate}

We consider the case when there exists a non-trivial off-spectral component 
$\Omega$ which is bounded and simply connected, with real-analytic boundary, 
and pick a ``root point'' $w_0\in\Omega$. To be precise, by an 
\emph{off-spectral component} we mean a connectivity component of the 
complement $\calS^c$.
This situation occurs, e.g., if the potential is strictly superharmonic 
in a portion of the plane, as is illustrated in 
Figure~\ref{fig:obst-sing}. In terms of the metric, this means that there
is a region where the curvature is negative.

\medskip

\noindent {\sc A word on notation.} To formulate our first main result, 
we need the function $\calQ_{w_0}$, which is bounded and holomorphic in the
off-spectral component $\Omega$ and whose real part equals $Q$ 
along the boundary $\partial\Omega$. To fix the imaginary part, we
require that $\calQ_{w_0}(w_0)\in\R$. In addition, we need the conformal 
mapping $\varphi_{w_0}$ which takes $\Omega$ onto the unit disk $\D$ with
$\varphi_{w_0}(w_0)=0$ and $\varphi_{w_0}'(w_0)>0$.
Since the boundary $\partial\Omega$ is assumed to be a real-analytically 
smooth Jordan curve, both the function $\calQ_{w_0}$ and the conformal mapping 
$\varphi_{w_0}$ extend analytically across $\partial\Omega$ to a fixed 
larger domain.
By possibly considering a smaller fixed larger domain, we may assume that 
the extended function 
$\varphi_{w_0}$ is conformal on the larger region.  
These observations are essential for our first main result, since we want the 
asymptotics to hold across the interface $\partial\Omega$.

\begin{thm}\label{thm:main-prel}
Assuming that $Q$ is an admissible potential, we have the following.
Given a positive integer $\kappa$ and a positive real $A$, there exist a
neighborhood $\Omega^{\circledast}$ of the closure of $\Omega$ and 
bounded holomorphic functions $\calB_{j,w_0}$ on $\Omega^{\circledast}$ 
for $j=0,\ldots,\kappa$, as well as domains $\Omega_{m}=\Omega_{m,A}$ with
$\Omega\subset\Omega_{m}\subset\Omega^{\circledast}$ which meet
\[
\mathrm{dist}_{\C}(\partial\Omega_m,\partial\Omega)\ge 
A\,m^{-\frac12}(\log m)^{\frac12},
\]
such that the normalized Bergman kernel at the point $w_0$ enjoys the 
asymptotic expansion
\begin{multline*}
\kernel_m(z,w_0)=\frac{K_m(z,w_0)}{K_m(w_0,w_0)^{1/2}}
\\
=m^{\frac14}(\varphi_{w_0}'(z))^\frac12\e^{m\calQ_{w_0}(z)}
\bigg\{\sum_{j=0}^{\kappa}m^{-j}\calB_{j,w_0}(z)+\Ordo\big(m^{-\kappa-1}\big)\bigg\},
\end{multline*}
as $m\to+\infty$, where the error term is uniform on 
$\Omega_m$. Here, the main term $\calB_{0,w_0}$ is obtained as the unique
zero-free holomorphic function on $\Omega$ which is smooth up to the boundary
with $\calB_{0,w_0}(w_0)>0$, and with prescribed boundary modulus  
\[
\lvert \calB_{0,w_0}(z)\rvert=\pi^{-\frac14}
[\hDelta Q(z)]^{\frac14},\qquad z\in\partial\Omega.
\]
Moreover, if $A$ is big enough, then
\[
\int_{\Omega_m}|\kernel_{m,w_0}(z)|^2\e^{-2mQ(z)}\diffA(z)=1+\Ordo(m^{-\kappa}).
\]
\end{thm}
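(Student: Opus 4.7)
The plan is to follow the singular-perturbation blueprint of our companion paper \cite{HW}. I will construct an explicit WKB-type quasi-coherent state on a fixed neighborhood $\Omega^\circledast\supset\overline{\Omega}$, fix its coefficients by matching leading asymptotics of both an $L^2$-normalization and of approximate orthogonality to the testing space $A^2_{mQ,1,w_0}$, and then convert it into a genuine element of $A^2_{mQ}$ by a weighted $\dbar$-correction. Concretely, I postulate
\[
F_m^{(\kappa)}(z):=m^{\frac14}(\varphi_{w_0}'(z))^{\frac12}\e^{m\calQ_{w_0}(z)}\sum_{j=0}^{\kappa}m^{-j}\calB_{j,w_0}(z),
\]
which is holomorphic on $\Omega^\circledast$ since both $\varphi_{w_0}$ and $\calQ_{w_0}$ extend holomorphically across the real-analytic Jordan curve $\partial\Omega$; the coefficients $\calB_{j,w_0}$ remain to be determined.

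The essential analytic fact driving the matching is that $\psi:=Q-\Re\calQ_{w_0}$ vanishes to second order on $\partial\Omega$ (value and outward normal derivative both vanish, by the $C^{1,1}$-matching of $\hat Q$ across $\partial\Omega$), with $\partial_n^2\psi=4\hDelta Q>0$. A Laplace/tubular-neighborhood expansion therefore converts weighted integrals of the form $\int|F_m^{(\kappa)}|^2\e^{-2mQ}\diffA$ and $\int F_m^{(\kappa)}\bar h\,\e^{-2mQ}\diffA$ into boundary integrals on $\partial\Omega$ organized as series in $m^{-1/2}$. The leading-order normalization $\|F_m^{(0)}\|_{mQ}^2=1$, using the Gaussian $\int\e^{-4m\hDelta Q\,r^2}\mathrm{d}r=\sqrt{\pi/(4m\hDelta Q)}$, collapses to $\frac{1}{2\pi}\int_{\partial\Omega}|\varphi_{w_0}'|\,|\calB_{0,w_0}|^2/\sqrt{\hDelta Q}\,|\mathrm{d}z|=1$, and since $\int_{\partial\Omega}|\varphi_{w_0}'|\,|\mathrm{d}z|=2\pi$ by conformality of $\varphi_{w_0}$, this forces the boundary modulus $|\calB_{0,w_0}|=\pi^{-1/4}(\hDelta Q)^{1/4}$. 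The zero-free holomorphic function on $\Omega$ with this modulus and $\calB_{0,w_0}(w_0)>0$ is uniquely recovered by harmonic conjugation, and it extends holomorphically to $\Omega^\circledast$ by Schwarz reflection. The higher $\calB_{j,w_0}$ are produced recursively: the vanishing of the $m^{-j}$-coefficient in the expanded orthogonality relations is a scalar Riemann-Hilbert problem on $\partial\Omega$ for $\calB_{j,w_0}/\calB_{0,w_0}$, solved order by order.

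To produce a genuine element of $A^2_{mQ}$, I fix a smooth cutoff $\chi_m$ with $\chi_m\equiv1$ on $\Omega_m$, $\mathrm{supp}\,\chi_m\subset\Omega^\circledast$, and $|\dbar\chi_m|\lesssim m^{1/2}(\log m)^{-1/2}$ on the transition strip, and solve $\dbar u_m=(\dbar\chi_m)F_m^{(\kappa)}$ via H\"ormander's estimate
\[
\int_\C|u_m|^2\e^{-2mQ}\diffA\le\frac{C}{m}\int_\C\frac{|\dbar\chi_m|^2|F_m^{(\kappa)}|^2}{\hDelta Q}\e^{-2mQ}\diffA.
\]
On $\mathrm{supp}\,\dbar\chi_m$ the distance to $\partial\Omega$ exceeds $Am^{-1/2}(\log m)^{1/2}$, where $|F_m^{(\kappa)}|^2\e^{-2mQ}\sim m^{1/2}\e^{-4m\hDelta Q\,r^2}\lesssim m^{1/2-cA^2}$; hence $\|u_m\|_{mQ}^2=\Ordo(m^{-\kappa-1})$ for $A$ large, and $f_m:=\chi_mF_m^{(\kappa)}-u_m\in A^2_{mQ}$. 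To identify $\tilde f_m:=f_m/\|f_m\|_{mQ}$ with $\kernel_{m,w_0}$, Cauchy-Schwarz gives $\tilde f_m(w_0)\le K_m(w_0,w_0)^{1/2}$; the near-orthogonality of $F_m^{(\kappa)}$ to $A^2_{mQ,1,w_0}$ from the construction, combined with the pointwise bound $|h(z)|\le K_m(z,z)^{1/2}\|h\|_{mQ}\e^{mQ(z)}$, closes the gap and yields $\|\tilde f_m-\kernel_{m,w_0}\|_{mQ}=\Ordo(m^{-\kappa-1})$. Upgrading this $L^2$-approximation to the pointwise expansion on $\Omega_m$ is then standard $L^2$-to-pointwise control for holomorphic functions against the weight $\e^{-2mQ}$, and the final normalization identity follows from the same Laplace expansion together with the $m^{-cA^2}$ Gaussian tail estimate for mass outside $\Omega_m$.

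The main obstacle is the Riemann-Hilbert step: systematically converting near-orthogonality against the infinite-dimensional testing space $A^2_{mQ,1,w_0}$ into a finite sequence of solvable scalar problems on $\partial\Omega$, and verifying that each solution $\calB_{j,w_0}$ extends holomorphically to a common neighborhood $\Omega^\circledast$ of $\overline\Omega$. This is the off-spectral analogue of the orthogonal-polynomial boundary expansion from \cite{HW}, and it relies crucially on the real-analyticity of $\partial\Omega$ and on the strict subharmonicity of $Q$ near the interface.
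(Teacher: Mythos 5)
Your proposal follows the paper's overall blueprint---build an approximate kernel, $\dbar$-correct it to an entire function via a H\"ormander estimate, and upgrade the $L^2$-approximation to pointwise control---and your account of the correction scheme and the growth estimates is accurate. But there is a genuine gap in the construction step, which is where the real work lies.

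First, you deduce the pointwise boundary modulus $|\calB_{0,w_0}|=\pi^{-1/4}(\hDelta Q)^{1/4}$ from the $L^2$-normalization $\|F_m^{(0)}\|_{mQ}=1$. This is a logical error: normalization is a single scalar integral constraint and cannot force a pointwise identity. Any zero-free holomorphic $\calB_0$ on $\Omega$, smooth to the boundary, with the correct value of $\int_{\partial\Omega}|\varphi'_{w_0}|\,|\calB_0|^2(\hDelta Q)^{-1/2}|\diff z|$ would satisfy it. The pointwise modulus is pinned down only once the near-orthogonality of $F_m^{(\kappa)}$ to the full partial space $A^2_{mQ,1,w_0}$ is brought into play.

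Second---and you flag this yourself as ``the main obstacle''---you do not explain how to convert near-orthogonality against the infinite-dimensional test space $A^2_{mQ,1,w_0}$ into a recursively solvable chain of scalar boundary problems. The paper's mechanism for this is the \emph{orthogonal foliation flow} (Lemma~\ref{lem:main-flow}): after canonical positioning to the unit disk via $\Vop_{m,n,w_0}$, one constructs a smooth flow of closed loops $\psi_{s,t}(\T)$ and a holomorphic function $f_s$ (whose Taylor coefficients in $s$ are the $B_j$) so that the \emph{pointwise} flow equation
\[
\lvert f_s\circ\psi_{s,t}(\zeta)\rvert^2\,\e^{-2s^{-1}R\circ\psi_{s,t}}\,
\Re\big(\bar\zeta\,\partial_t\psi_{s,t}\,\overline{\psi_{s,t}'}\big)
=\e^{-s^{-1}t^2}\Big\{(4\pi)^{-\frac12}+\Ordo\big(\lvert s\rvert^{\kappa+\frac12}+\lvert t\rvert^{2\kappa+1}\big)\Big\}
\]
holds on $\T$. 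Setting $s=t=0$ in this pointwise equation is exactly what yields $|B_0|=\pi^{-1/4}(\hDelta R)^{1/4}$ on $\T$; the integral normalization is then a consequence, not the cause. Near-orthogonality to $A^2_{mQ,1,w_0}$ is established by integrating the pairing along the flow parameter $t$ and invoking the mean-value property
\[
\int_\T q_{m,n}\circ\psi_{m,n,t}\,\diffs=q_{m,n}(0)
\]
for the holomorphic ratio $q_{m,n}=q/f_s$, which annihilates every test function vanishing at $w_0$. A bare Laplace expansion on $\Omega$, as you propose, produces at each order in $m^{-1/2}$ a boundary integral against the trace $\bar h|_{\partial\Omega}$ of an arbitrary test function $h$, and without a structure that collapses this infinite family to a single holomorphic quantity evaluated at $w_0$, the recursion for the $\calB_j$ does not close. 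The paper does use Laplace's method (Theorem~\ref{thm:comput-coeff}), but only as a device to \emph{compute} the $B_j$ once the flow has established their existence; it is not a substitute for the flow.
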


As an illustration of this result, we show the Gaussian wave character of
the Berezin density $\kernel_m(w_0,w_0)^{-1}|\kernel_m(z,w_0)|^2\e^{-2mQ(z)}$ 
in Figure \ref{fig:wave}.

\begin{figure}
\centering
  \includegraphics[width=.7\linewidth]{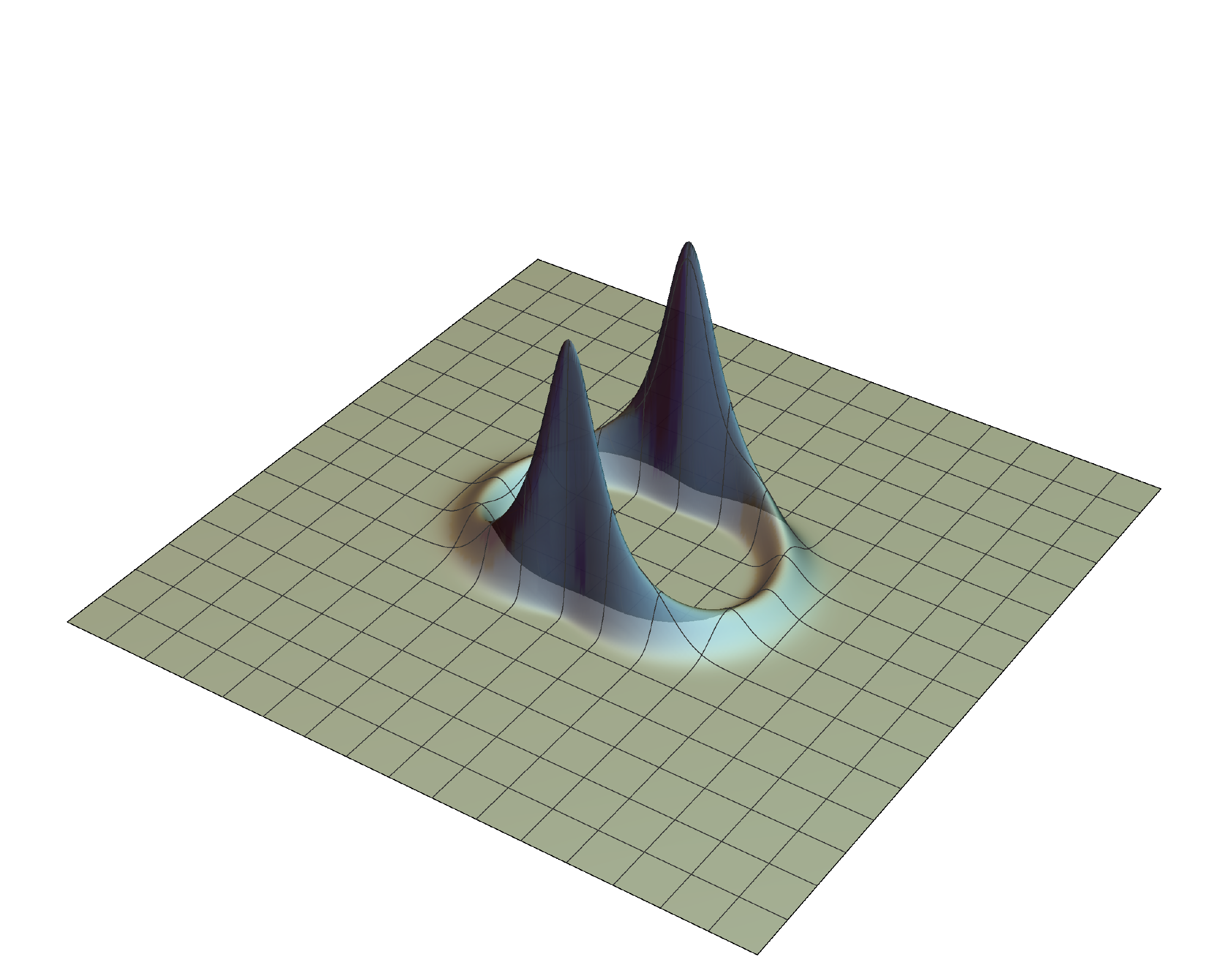}
\caption{
Illustration of the the Gaussian wave asymptotics of the probability 
density associated to 
the coherent state $\kernel_{m,0,0}(z)$, where 
$Q(z)=\frac12|z|^{-2}-\frac18\Re(z^{-2})+\log|z|$.
The root point is at the origin, and the asymptotics of 
Theorem~\ref{thm:main-prel} are 
valid in the domain $\Omega$ (inside the ridge) including a neighborhood 
of the boundary 
$\partial\Omega$ of width proportional to $(m^{-1}\log m)^{\frac12}$.}
\label{fig:wave}
\end{figure}

\begin{rem} Using an approach based on Laplace's method, the 
functions $\calB_{j,w_0}$ may be obtained algorithmically, for 
$j=1,2,3,\ldots$, see Theorem~\ref{thm:comput-coeff} below. The
details of the algorithm are analogous with the case of the orthogonal 
polynomials presented in \cite{HW}. 
\end{rem}

\noindent {\sc Commentary to the theorem.}
The point $w_0$ may be chosen as any fixed point in $\Omega$, while 
the point $z$ is allowed to vary anywhere inside the set $\Omega_m$, which 
contains the entire
off-spectral component $\Omega$ as well as a shrinking neighborhood of 
the boundary $\partial \Omega$.
Hence, we obtain {\em macroscopically off-diagonal asymptotics}, where 
the points $z$ and $w_0$ are either both off-spectral, or where $z$ is 
spectral near-boundary and $w_0$ off-spectral, respectively. 
To our knowledge, this is the first instance of such asymptotics. 
Indeed, earlier work covers either diagonal 
or near-diagonal asymptotics inside the bulk of the spectrum. The analysis 
of Bergman kernel asymptotics for two macroscopically separated points in the 
bulk $\calS^\circ$ appears difficult if we ask for high precision, 
and the same can be said for the case when $w_0\in\Omega$ is off-spectral 
and $z\in\calS^\circ$ is a bulk point. To address the latter issue, we may 
ask what happens in Theorem~\ref{thm:main-prel} if $z$ is not in the set 
$\Omega_m$.  
Since $\Omega_m$ captures most of the $L^2$-mass of the root function 
in view of Theorem~\ref{thm:main-prel}, we see that the coherent state is 
minuscule outside $\Omega_m$ 
in the $L^2$-sense. If we want corresponding pointwise control, we may 
appeal to e.g. the subharmonicity estimate of Lemma~ 3.2 of \cite{ahm1}.

\subsection{Expansion of partial Bergman kernels in terms 
of root functions}
For a $(\tau,w_0)$-admissible potential, the partial Bergman kernel 
$K_{m,n,w_0}$ with the root point $w_0$ is well-defined and nontrivial. 
In analogy with Taylor's formula, it enjoys an expansion in terms of the 
root functions $\kernel_{m,n',w_0}$ for $n'\ge n$. 

\begin{thm} 
Under the above assumption of $(\tau,w_0)$-admissibility of $Q$,
we have that
\[
K_{m,n,w_0}(z,w)=\sum_{n'=n}^{+\infty}\kernel_{m,n',w_0}(z)
\overline{\kernel_{m,n',w_0}(w)},\qquad(z,w)\in\C\times\C.
\]
\label{thm:expansion}
\end{thm}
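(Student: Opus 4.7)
The identity amounts to saying that the root functions $\{\kernel_{m,n',w_0}\}_{n'\ge n}$ form an orthonormal basis for the Hilbert space $A^2_{mQ,n,w_0}$, and that the displayed formula is the corresponding Parseval expansion of its reproducing kernel. Accordingly the plan has three steps: (i) orthonormality; (ii) completeness; (iii) assembly of the kernel formula via the reproducing property. The $(\tau,w_0)$-admissibility hypothesis enters only to guarantee that $K_{m,n,w_0}$ is well defined and nontrivial, so that the ambient Hilbert space is meaningful.

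The first step relies on a Riesz representation: for every $n'\ge 0$, the bounded linear functional $f\mapsto f^{(n')}(w_0)$ on $A^2_{mQ,n',w_0}$ admits a representer, and the extremal characterisation of the root function gives
\[
f^{(n')}(w_0)=c_{n'}\,\langle f,\kernel_{m,n',w_0}\rangle_{mQ}\qquad\text{for all }f\in A^2_{mQ,n',w_0},
\]
for some constant $c_{n'}\ge 0$, with $c_{n'}=0$ precisely when $\kernel_{m,n',w_0}$ is declared to be zero. Each $\kernel_{m,n',w_0}$ with $n'\ge n$ lies in $A^2_{mQ,n',w_0}\subset A^2_{mQ,n,w_0}$ and has norm $1$ whenever it is nonzero. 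For orthogonality, take $n\le n_1<n_2$: since $\kernel_{m,n_2,w_0}$ vanishes to order at least $n_2>n_1$ at $w_0$, the left-hand side of the displayed identity with $f=\kernel_{m,n_2,w_0}$ and $n'=n_1$ vanishes, forcing $\langle\kernel_{m,n_2,w_0},\kernel_{m,n_1,w_0}\rangle_{mQ}=0$ (or else $\kernel_{m,n_1,w_0}=0$ and orthogonality is trivial).

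The crux of the argument is completeness. Suppose $f\in A^2_{mQ,n,w_0}$ is orthogonal to every root function $\kernel_{m,n',w_0}$ with $n'\ge n$. Taking $n'=n$ in the representation identity gives $f^{(n)}(w_0)=0$, either directly from the orthogonality assumption, or, in the degenerate case $c_n=0$, from the fact that the functional already vanishes identically on $A^2_{mQ,n,w_0}$. Hence $f\in A^2_{mQ,n+1,w_0}$, and iterating shows that $f$ vanishes to every order at $w_0$; since $f$ is entire, $f\equiv 0$. Finally, expanding $K_{m,n,w_0}(\cdot,w)\in A^2_{mQ,n,w_0}$ in the orthonormal basis and reading off the Fourier coefficients by the reproducing property yields
\[
K_{m,n,w_0}(z,w)=\sum_{n'=n}^{+\infty}\kernel_{m,n',w_0}(z)\,\overline{\kernel_{m,n',w_0}(w)},
\]
where absolute pointwise convergence follows from Cauchy--Schwarz combined with Parseval's identity $\sum_{n'\ge n}|\kernel_{m,n',w_0}(z)|^2=K_{m,n,w_0}(z,z)$, and uniform convergence on compact subsets of $\C\times\C$ from the local boundedness of point evaluation on $A^2_{mQ,n,w_0}$. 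The only genuinely delicate point is the completeness step; everything else is routine for reproducing-kernel Hilbert spaces.
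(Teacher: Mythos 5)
Your proof is correct, and while it follows the same overall plan as the paper's (orthonormality of the root functions, completeness, then Parseval for the reproducing kernel of the closed span), the mechanism you use for the first two steps is genuinely different. The paper works from the limit representation \eqref{eq:normalized-parker}: orthogonality is derived by estimating the renormalized kernel inner product $K_{m,n',w_0}(\zeta',\zeta')^{-1/2}K_{m,n'',w_0}(\zeta'',\zeta'')^{-1/2}K_{m,n',w_0}(\zeta'',\zeta')=\Ordo(|\zeta''-w_0|^{n'-n''})$ as $\zeta''\to w_0$, and completeness is derived by isolating the precise vanishing order $N$ of a putative $f$ orthogonal to all root functions, using $K_{m,N,w_0}(\zeta,\zeta)\asymp|\zeta-w_0|^{2N}$ near $w_0$ to extract a contradiction from the same limit formula. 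You instead go back to the extremal definition of $\kernel_{m,n',w_0}$ and invoke the Riesz representer $\phi_{n'}$ of the derivative functional $f\mapsto f^{(n')}(w_0)$ on $A^2_{mQ,n',w_0}$; identifying $\kernel_{m,n',w_0}=\phi_{n'}/\|\phi_{n'}\|_{mQ}$ when nontrivial gives the clean identity $f^{(n')}(w_0)=c_{n'}\langle f,\kernel_{m,n',w_0}\rangle_{mQ}$ with $c_{n'}\ge 0$, from which orthogonality follows by plugging in $f=\kernel_{m,n_2,w_0}$ with $n_2>n_1$, and completeness by iterating to show that every Taylor coefficient of $f$ at $w_0$ vanishes. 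Your route is shorter and avoids the diagonal asymptotics of $K_{m,N,w_0}$ near $w_0$, at the small cost of noting once that $f\mapsto f^{(n')}(w_0)$ is a bounded functional on the partial Bergman space (which the $(\tau,w_0)$-admissibility setup supplies). The paper's choice has the advantage of re-using the representation \eqref{eq:normalized-parker} that is introduced and exploited elsewhere in the article, whereas yours is the more elementary, self-contained packaging of the same fact.
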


\begin{proof}
For $n',n''\ge n$ with $n''<n'$, the functions $\kernel_{m,n',w_0}$
and $\kernel_{m,n'',w_0}$ are orthogonal in $A^2_{mQ}$. If one of them is trivial,
orthogonality is immediate, while if both are nontrivial, we argue as 
follows. Let $\zeta',\zeta''\in\C$ be close to $w_0$, and calculate that
\begin{multline*}
K_{m,n',w_0}(\zeta',\zeta')^{-\frac12}K_{m,n'',w_0}(\zeta'',\zeta'')^{-\frac12}
\big\langle K_{m,n',w_0}(\cdot,\zeta'),K_{m,n'',w_0}(\cdot,\zeta'')
\big\rangle_{mQ}
\\
=K_{m,n',w_0}(\zeta',\zeta')^{-\frac12}K_{m,n'',w_0}(\zeta'',\zeta'')^{-\frac12}
K_{m,n',w_0}(\zeta'',\zeta')=\Ordo(|\zeta''-w_0|^{n'-n''}),
\end{multline*}
which tends to $0$ as $\zeta''\to w_0$. The claimed orthogonality follows. 
Moreover, since the root functions $\kernel_{m,n',w_0}$ have unit norm 
when nontrivial, the expression 
\[
\sum_{n'=n}^{+\infty}\kernel_{m,n',w_0}(z)
\overline{\kernel_{m,n',w_0}(w)}
\]
equals the reproducing kernel function for the Hilbert space with the norm 
of $A^2_{mQ}$ spanned by the vectors $\kernel_{m,n',w_0}$ with $n'\ge n$. 
It remains to check that this is the whole partial Bergman space 
$A^2_{mQ,n,w_0}$. To this end, let $f\in A^2_{mQ,n,w_0}$ be orthogonal to all the
the vectors $\kernel_{m,n',w_0}$ with $n'\ge n$. By the definition of the space
$A^2_{mQ,n,w_0}$, this means that $f(z)=\Ordo(|z-w_0|^n)$ near $w_0$. If $f$
is nontrivial, there exists an integer $N\ge n$ such that $f(z)=c\,(z-w_0)^N
+\Ordo(|z-w_0|^{N+1})$ near $w_0$, where $c\ne0$ is complex. At the same time,
the existence of such nontrivial $f$ entails that the corresponding root 
functions $\kernel_{m,N,w_0}$ is nontrivial as well, and  
that $K_{m,N,w_0}(\zeta,\zeta)\asymp|\zeta-w_0|^{2N}$ for $\zeta$ near $w_0$. 
On the other hand, the orthogonality between $f$ and $\kernel_{m,N,w_0}$ 
gives us that
\begin{multline*}
0=\langle f,\kernel_{m,N,w_0}\rangle_{mQ}=
\lim_{\zeta\to w_0}K_{m,N,w_0}(\zeta,\zeta)^{-\frac12}\big\langle f,
K_{m,N,w_0}(\cdot,\zeta)\big\rangle_{mQ}
\\
=\lim_{\zeta\to w_0}K_{m,N,w_0}(\zeta,\zeta)^{-\frac12} f(\zeta)
\end{multline*}
where we approach $w_0$ only in an appropriate direction so that the limit
exists. But this contradicts the given asymptotic behavior of $f(\zeta)$ near
$w_0$, since $c\ne0$ tells us that any limit of the right-hand side would
be nonzero. 
\end{proof}

\subsection{Off-spectral asymptotics of partial Bergman 
kernels}
Given a point $w_0\in\C$ we recall the partial Bergman spaces 
$A^2_{mQ,n,w_0}$, and the associated spectral droplets $\calS_{\tau,w_0}$ (see
\eqref{eq:calStau}), where we keep $n=\tau m$. 
Before we proceed with the formulation of the second result, let us fix some 
terminology. 

\begin{defn}\label{def:adm}
A real-valued potential $Q$ is said to be {\em $(\tau,w_0)$-admissible} if the 
following conditions hold:
\begin{enumerate}[(i)]
\item $Q:\C\to\R$ is $C^2$-smooth and has sufficient growth at infinity:
\[
\tau_Q:=\liminf_{\lvert z\rvert\to+\infty}\frac{Q(z)}{\log \lvert z\rvert}>0.
\]
\item $Q$ is real-analytically smooth and strictly subharmonic in a 
neighborhood of $\partial\calS_{\tau,w_0}$.
\item The point $w_0$ is an off-spectral point, i.e., 
$w_0\notin\calS_{\tau,w_0}$, and the component $\Omega_{\tau,w_0}$ of the 
complement $\calS_{\tau,w_0}^c$ containing the point $w_0$ is bounded and 
simply connected, with real-analytically smooth Jordan curve boundary.
\end{enumerate}
If for an interval $I\subset[0,+\infty[$, the potential $Q$ is 
$(\tau,w_0)$-admissible for each $\tau\in I$ and 
$\{\Omega_{\tau,w_0}\}_{\tau\in I}$ is a smooth flow of domains, then $Q$ is 
said to be $(I,w_0)$-admissible.
\end{defn}

Generally speaking, off-spectral components may be unbounded.
It is for reasons of simplicity that we focus on bounded off-spectral 
components in the above definition.

\medskip

\noindent {\sc A word on notation.} 
We will assume in the sequel that $Q$ is $(I,w_0)$-admissible for some 
non-trivial compact interval $I=I_{0}$. For an illustration of the situation,
see Figure~\ref{fig:mc}.
\begin{figure}
\centering
  \includegraphics[width=.6\linewidth,trim=0 70 0 0,clip]
{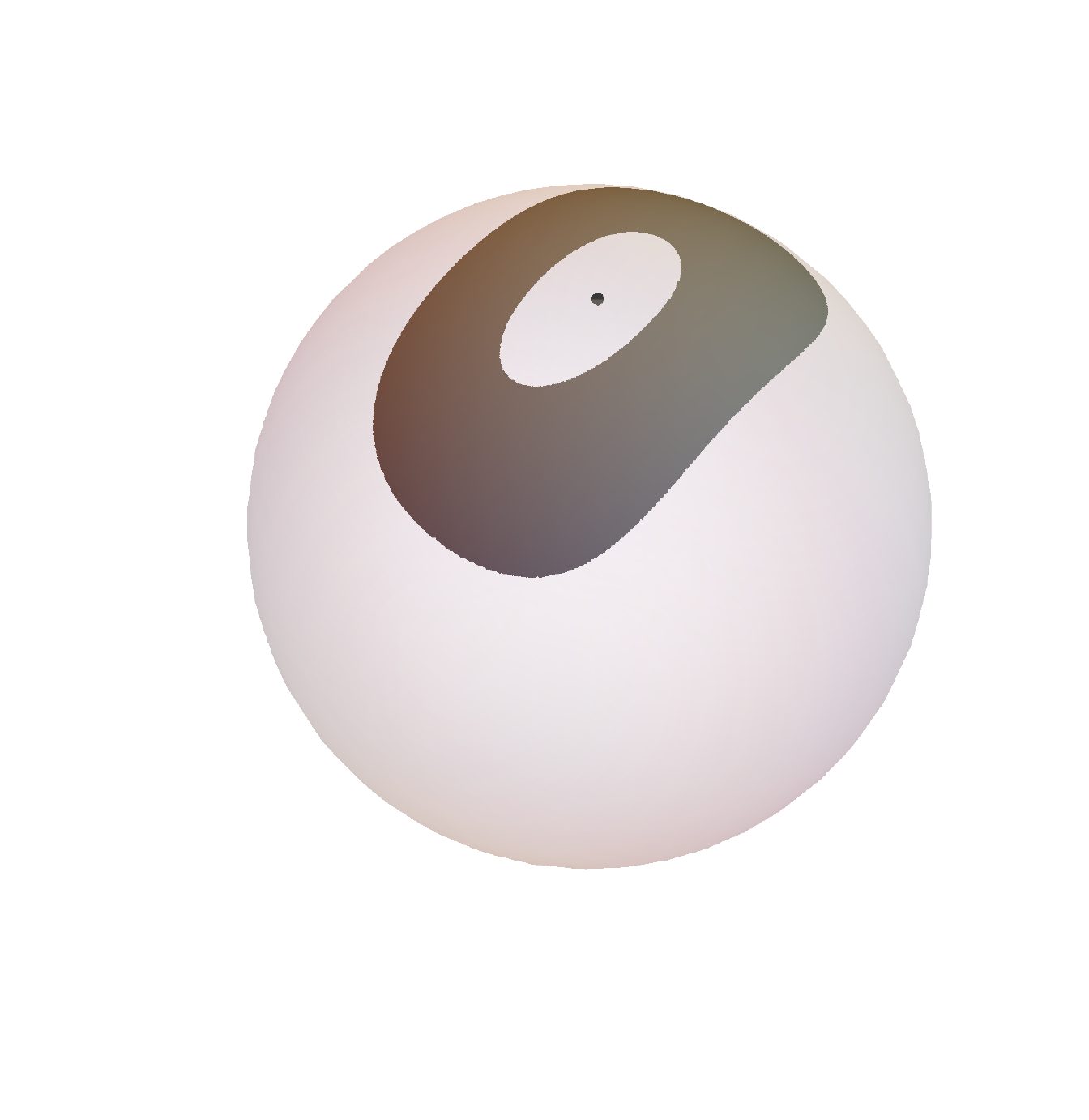}
\caption{
Illustration of a spectral droplet (shaded) in the Riemann sphere with two 
off-spectral components of the complement.
The point $w_0$ is indicated by the dot near 
the north pole, and the connectivity component of the off-spectral set 
containing $w_0$ is denoted by $\Omega_{w_0}$.}
\label{fig:mc}
\end{figure}
We let $\varphi_{\tau,w_0}$ denote the surjective Riemann mapping
\begin{equation}\label{eq:conf-map-bdd}
\varphi_{\tau,w_0}\colon \Omega_{\tau,w_0}\to\D,\quad 
\varphi_{\tau,w_0}(0)=0,\quad \varphi_{\tau,w_0}'(0)>0,
\end{equation}
which by our smoothness assumption on the boundary $\partial\Omega_{\tau,w_0}$ 
extends conformally across $\partial\Omega_{\tau,w_0}$.
We denote by $\mathcal{Q}_{\tau,w_0}$ the bounded holomorphic function in 
$\Omega_{\tau,w_0}$ whose real part equals $Q$ on $\partial\Omega_{\tau,w_0}$
and is real-valued at $w_0$. It is tacitly assumed to extend holomorphically
across the boundary $\partial\Omega_{\tau,w_0}$.   
We now turn to our second main result.

\begin{thm}
\label{thm:main1} 
Assume that the potential $Q$ is 
$(I_0,w_0)$-admissible, where the interval $I_0$ is compact.
Given a positive integer $\kappa$ and a positive real $A$, there exists
a neighborhood $\Omega^{\circledast}_{\tau,w_0}$ of the closure
of $\Omega_{\tau,w_0}$ and bounded holomorphic functions $\calB_{j,\tau,w_0}$
on $\Omega^{\circledast}_{\tau,w_0}$, as well as 
domains $\Omega_{\tau,w_0,m}=\Omega_{\tau,w_0,m,A}$ with 
$\Omega_{\tau,w_0}\subset\Omega_{\tau,w_0,m}\subset
\Omega^{\circledast}_{\tau,w_0}$ which meet
\[
\mathrm{dist}_{\C}(\Omega_{\tau,w_0,m}^c,
\Omega_{\tau,w_0})\ge Am^{-\frac12}(\log m)^{\frac12},
\]
such that the root function of order $n$ at $w_0$ enjoys the 
expansion
\[
\kernel_{m,n,w_0}(z)=
m^{\frac{1}{4}}(\varphi_{\tau,w_0}'(z))^{\frac12} (\varphi_{\tau,w_0}(z))^n
\e^{m\calQ_{\tau,w_0}(z)}\bigg\{\sum_{j=0}^{\kappa}m^{-j}
\calB_{j,\tau,w_0}(z)+\Ordo(m^{-\kappa-1})\bigg\}
\]
on $\Omega_{\tau,w_0,m}$ as $n=\tau m\to+\infty$ while $\tau\in I_0$,
where the error term is uniform.
Here, the main term $\calB_{0,\tau,w_0}$ is zero-free and smooth up to the 
boundary on $\Omega_{\tau,w_0}$ with $\calB_{0,\tau,w_0}(w_0)>0$, 
and with prescribed boundary modulus
\[
\lvert \calB_{0,\tau,w_0}(\zeta)\rvert=\pi^{-\frac14}[\hDelta Q(\zeta)]^{\frac14},
\qquad\zeta\in\partial\Omega_{\tau,w_0}.
\]
Moreover, for $A$ big enough, we have
\[
\int_{\Omega_{\tau,w_0,m}}|\kernel_{m,n,w_0}(z)|^2\e^{-2mQ(z)}\diffA(z)=
1+\Ordo(m^{-\kappa})
\]
as $n=\tau m\to+\infty$.
\end{thm}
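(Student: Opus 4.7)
The plan is to adapt the quasi-solution and $\dbar$-correction scheme from \cite{HW} already used to prove Theorem \ref{thm:main-prel}, the key new ingredient being the factor $(\varphi_{\tau,w_0}(z))^n$ in the ansatz. Since $\varphi_{\tau,w_0}$ has a simple zero at $w_0$, this factor automatically supplies the required vanishing to order $n$ at the root point; and since it has modulus one on $\partial\Omega_{\tau,w_0}$, it does not disturb the boundary-matching that pins down the leading coefficient $\calB_{0,\tau,w_0}$.

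Concretely, I would first assemble the holomorphic quasi-approximation
\[
F_m(z):=m^{\frac14}(\varphi_{\tau,w_0}'(z))^{\frac12}(\varphi_{\tau,w_0}(z))^n\e^{m\calQ_{\tau,w_0}(z)}\sum_{j=0}^{\kappa}m^{-j}\calB_{j,\tau,w_0}(z)
\]
on a fixed neighborhood $\Omega^{\circledast}_{\tau,w_0}$ of $\overline{\Omega_{\tau,w_0}}$, where the holomorphic extensions of $\varphi_{\tau,w_0}$ and $\calQ_{\tau,w_0}$ are guaranteed by the real-analyticity of $\partial\Omega_{\tau,w_0}$. The coefficients $\calB_{j,\tau,w_0}$ would be determined recursively by Laplace asymptotics of $\int|F_m|^2\e^{-2mQ}\diffA$ in a normal tube around $\partial\Omega_{\tau,w_0}$: once $|\varphi_{\tau,w_0}|^{2n}\e^{2m(\Re\calQ_{\tau,w_0}-Q)}$ is expanded normally to the boundary, the integrand sharpens to a Gaussian of scale $m^{-\frac12}$ concentrated on $\partial\Omega_{\tau,w_0}$, and the unit-mass condition forces $|\calB_{0,\tau,w_0}|=\pi^{-1/4}[\hDelta Q]^{1/4}$ on $\partial\Omega_{\tau,w_0}$. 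The positivity $\calB_{0,\tau,w_0}(w_0)>0$ pins down $\calB_{0,\tau,w_0}$ uniquely via harmonic conjugation of the log-modulus, and the higher coefficients fall out of the same recursive algorithm as in \cite{HW}.

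The second step would be to multiply $F_m$ by a smooth cutoff $\chi_m$ equal to $1$ on $\Omega_{\tau,w_0,m}$ and supported in $\Omega^{\circledast}_{\tau,w_0}$, and then correct the non-holomorphic residual by H\"ormander's weighted $L^2$-estimate for $\dbar$ with weight $\e^{-2mQ(z)}|z-w_0|^{2n}$, where the logarithmic singularity at $w_0$ enforces the $n$-fold vanishing there. Setting $\tilde F_m:=\chi_m F_m-u$ with $u$ the minimal-norm $\dbar$-solution, one notes that the support of $\dbar\chi_m$ sits at distance $\gtrsim Am^{-\frac12}(\log m)^{\frac12}$ outside $\partial\Omega_{\tau,w_0}$, where $\e^{2m(\Re\calQ_{\tau,w_0}-Q)}\le m^{-cA^2}$ by the strict subharmonicity of $Q-\Re\calQ_{\tau,w_0}$ across the boundary; choosing $A$ large then makes $\|u\|_{mQ}$ smaller than any prescribed negative power of $m$. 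One finally identifies $\tilde F_m$ with $\kernel_{m,n,w_0}$ up to $\Ordo(m^{-\kappa-1})$ via the extremal characterization of the root function: $\tilde F_m$ lies in $A^2_{mQ,n,w_0}$, has unit norm up to a tiny error, and delivers a nearly optimal value of $\Re f^{(n)}(w_0)$, so a Hilbert-space projection argument combined with the subharmonicity estimate of Lemma 3.2 of \cite{ahm1} upgrades $L^2$-closeness to pointwise closeness on $\Omega_{\tau,w_0,m}$.

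The principal obstacle will be ensuring uniformity in $\tau\in I_0$ at every stage of the recursive construction of the $\calB_{j,\tau,w_0}$, together with securing a common $\tau$-indexed family of extension domains $\Omega^{\circledast}_{\tau,w_0}$ on which $\varphi_{\tau,w_0}$ remains univalent and depends smoothly on $\tau$; the smooth-flow hypothesis built into $(I_0,w_0)$-admissibility is tailored precisely to this. A secondary technical point is the strict negativity of $\Re\calQ_{\tau,w_0}-Q$ on $\Omega^{\circledast}_{\tau,w_0}\setminus\overline{\Omega_{\tau,w_0}}$, without which the $\dbar$-error would not decay sufficiently fast; this follows from Schwarz reflection across $\partial\Omega_{\tau,w_0}$ combined with the strict subharmonicity of $Q$ near the boundary.
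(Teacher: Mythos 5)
Your overall architecture — canonical positioning through the conformal factor $(\varphi_{\tau,w_0})^n$, an approximate root function built from extendable holomorphic data, a cutoff, a weighted $\bar\partial$-correction, and finally identification with the extremal element — matches the paper's strategy and is the right skeleton. But there are two gaps that matter.

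First, you determine the $\calB_{j,\tau,w_0}$ by Laplace asymptotics of the \emph{norm} $\int|F_m|^2\e^{-2mQ}\diffA$ and appeal to "the same recursive algorithm as in \cite{HW}." Unit norm alone does not pin down the root function; what the argument actually needs is approximate \emph{orthogonality} of $\chi F_m$ to the subspace $A^2_{mQ,n+1,w_0}$, i.e.\ condition (i) of Definition~\ref{def:q-pol-orth}. The paper obtains this through the orthogonal foliation flow of Lemma~\ref{lem:main-flow}: after canonical positioning, the integral $\int \chi_1 f^{\langle\kappa\rangle}_{m,n,w_0}\bar q\,\e^{-2mR_{\tau,w_0}}\diffA$ is disintegrated over the flow loops $\psi_{m,n,t}(\T)$, and the flow equation reduces each loop integral to $\int_\T q_{m,n}\circ\psi_{m,n,t}\,\diffs$, which by the mean value property equals $q_{m,n}(0)$ and so vanishes whenever $g$ vanishes to order $>n$ at $w_0$. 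Your proposal skips this mechanism entirely, which also undercuts the identification step: to conclude that $\tilde F_m$ "delivers a nearly optimal value of $\Re f^{(n)}(w_0)$" you would need a matching upper bound on the extremal value to precision $\Ordo(m^{-\kappa-1})$, which is circular unless you already have the orthogonality relation. With orthogonality in hand, the paper instead writes $\tilde\kernel_{m,n,w_0}=\kernel^\star_{m,n,w_0}-\Pop_{m,n+1,w_0}\kernel^\star_{m,n,w_0}$, bounds the projection by the approximate orthogonality as in~\eqref{eq:norm-orth-proj}, and observes that the result is automatically a scalar multiple of the true root function.

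Second, the weight you propose for the $\bar\partial$-estimate, $\e^{-2mQ(z)}|z-w_0|^{2n}$, has a \emph{zero} of order $2n$ at $w_0$, not a pole, so $\int|u|^2\e^{-2mQ}|z-w_0|^{2n}\diffA<\infty$ imposes no vanishing on $u$ at all; moreover the associated potential $Q-\tau\log|z-w_0|$ carries a negative point mass at $w_0$, which spoils the subharmonicity required for H\"ormander's estimate. The paper uses the obstacle solution $\hat Q_{\tau,w_0}$ as potential (Proposition~\ref{prop:horm}); by Proposition~\ref{prop:obst-sol} it satisfies $\hat Q_{\tau,w_0}(z)=\tau\log|z-w_0|+\Ordo(1)$ near $w_0$, so $\e^{-2m\hat Q_{\tau,w_0}}\sim|z-w_0|^{-2n}$ blows up and forces the minimal solution to vanish to order $n$, while $\hat Q_{\tau,w_0}$ is globally subharmonic and coincides with $Q$ on $\calS_{\tau,w_0}$, which is what gives the exponential decay you invoke. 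This is more than a sign typo: the choice of the obstacle solution (rather than any ad hoc logarithmic perturbation of $Q$) is precisely what makes the $\bar\partial$-surgery work, and your $\Re\calQ_{\tau,w_0}-Q$ strict-negativity argument for the decay of $\|u\|_{mQ}$ should instead be phrased in terms of $\hat Q_{\tau,w_0}-Q<0$ off the coincidence set.
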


\begin{rem} As in the case of the normalized Bergman kernels, the
expressions $\calB_{j,\tau, w_0}$ may be obtained algorithmically, for 
$j=1,2,3,\ldots$ (see Theorem~\ref{thm:comput-coeff} below). 
\end{rem}

\noindent {\sc Commentary to the theorem.}
{\sc(a)} As in Theorem~\ref{thm:main-prel}, the point $w_0$ may be 
chosen as any fixed point in $\Omega$, while 
the point $z$ is allowed to vary anywhere inside the set 
$\Omega_{\tau,w_0,m}$. Arguing in a fashion 
analogous to what we did in the commentary following 
Theorem~\ref{thm:main-prel}, we may conclude that 
$\kernel_{m,n,w_0}$ is small, both pointwise and in the $L^2$-sense, 
away from the set $\Omega_{\tau,w_0,m}$.

\noindent {\sc(b)} Our Theorems~\ref{thm:main-prel} and \ref{thm:main1} 
seemingly cover different instances of the root function asymptotics. 
In fact, in a certain sense, they are equivalent. Indeed, modulo 
technicalities we may obtain
Theorem~\ref{thm:main1} from Theorem~\ref{thm:main-prel} by replacing 
the potential 
$Q(z)$ with $\tilde{Q}_{\tau,w_0}(z)=Q(z)-\tau\log|z-w_0|$.
On the other hand, the former theorem is the limit case $\tau\to0$ of 
the latter.

\noindent {\sc(c)} Theorem \ref{thm:main1} should be compared with 
Theorem~4.1 in the work 
of Shiffman and Zelditch \cite{Shiffman-Zelditch}.
There, an asymptotic expansion of the diagonal restriction $K_{NP}(z,z)$ 
of the Bergman
kernel associated to a family of scaled Newton polytopes $NP$ is obtained, 
as $N\to+\infty$. This expansion
is valid deep inside the corresponding forbidden region.
In one complex dimension, this is equivalent to studying a certain 
weighted partial Bergman density 
for a polynomial Bergman space. Hence there is some overlap with the 
present work as well as with \cite{HW}.

\noindent {\sc(d)} The genuinely off-diagonal and off-spectral 
asymptotic expansion of the 
coherent states obtained here seem not to have any analogue elsewhere. 
Although we restrict ourselves to the setting of one complex variable,
we compensate for this by making minimal geometric assumptions.

\subsection{Interface transition of the density of states}
As a consequence of Theorem~\ref{thm:main1}, we obtain the 
transition behavior of the (partial) Bergman densities at emergent interfaces.
To explain how this works, we fix a bounded simply connected off-spectral 
component $\Omega$ with real-analytic Jordan curve boundary, 
associated to either a sequence of partial Bergman kernels $K_{m,n,w_0}$ of the 
space $A^2_{mQ,n,w_0}$ with the ratio $\tau=\frac{n}{m}$ fixed, 
or, alternatively, a sequence $K_m$ of full Bergman kernels. We think of
the latter as the parameter choice $\tau=0$.
We assume that the potential $Q$ is real-analytically smooth and strictly 
subharmonic near $\partial\Omega$.
Let $z_0\in\partial\Omega$, and denote by $\nu\in\T$ the inward unit normal to 
$\partial\Omega$ at $z_0$. We define the rescaled density 
$\varrho_m=\varrho_{m,\tau,w_0,z_0}$ by
\begin{equation}\label{eq:resc-density}
\varrho_m(\xi)=\frac{1}{2m\hDelta Q(z_0)}
K_{m,\tau m,w_0}\big(z_m(\xi), z_m(\xi)\big)\e^{-2mQ(z_m(\xi))},
\end{equation}
where the rescaled variable is defined implicitly by
\[
z_m(\xi)=z_0+\nu\frac{\xi}{\sqrt{2m\hDelta Q(z_0)}}.
\]

\begin{cor}
\label{cor:interface}
The rescaled density $\varrho_m$ in \eqref{eq:resc-density} has the limit
\[
\lim_{m\to+\infty}\varrho_m(\xi)=\mathrm{erf}\,(2\Re \xi)
=\frac{1}{\sqrt{2\pi}}\int_{2\Re \xi}^{\infty}\e^{-t^2/2}\diff t,
\]
where the convergence is uniform on compact subsets.
\end{cor}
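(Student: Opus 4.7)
The plan is to combine Theorem~\ref{thm:expansion} with Theorem~\ref{thm:main1}. The former expresses the partial Bergman kernel on the diagonal as
\[
K_{m,n,w_0}(z,z) = \sum_{k=0}^{\infty}\bigl|\kernel_{m,n+k,w_0}(z)\bigr|^2,
\]
and I would substitute the leading-order asymptotics of Theorem~\ref{thm:main1} into each term, with $\tau'=\tau+k/m$. Introducing the phase function
\[
\Psi_{\tau'}(z):=\Re\calQ_{\tau',w_0}(z)+\tau'\log|\varphi_{\tau',w_0}(z)|-Q(z),
\]
which coincides with the obstacle gap $\hat Q_{\tau',w_0}-Q$ inside $\Omega_{\tau',w_0}$ and extends harmonically across the real-analytic boundary to $\Omega^{\circledast}_{\tau',w_0}$, each summand becomes
\[
\bigl|\kernel_{m,n+k,w_0}(z)\bigr|^2\e^{-2mQ(z)}\sim m^{1/2}|\varphi_{\tau',w_0}'(z)|\,|\calB_{0,\tau',w_0}(z)|^2\,\e^{2m\Psi_{\tau'}(z)}.
\]
By smooth fit, $\Psi_\tau$ and $\nabla\Psi_\tau$ vanish on $\partial\Omega_{\tau,w_0}$, while $\hDelta\Psi_\tau=-\hDelta Q$ since $\tau\log|\varphi_{\tau,w_0}|$ and $\Re\calQ_{\tau,w_0}$ are harmonic; these facts pin down the pure-$z$ second-order jet of $\Psi_\tau$ at the boundary point $z_0$.

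Next I would plug in the rescaled variable $z=z_m(\xi)$ and Taylor expand $\Psi_{\tau'}$ jointly in $(\tau'-\tau,\,z-z_0)$. A Hadamard-type variation identifies $\partial_{\tau'}\Psi_{\tau'}|_{\tau'=\tau}(z)=\log|\varphi_{\tau,w_0}(z)|$, which vanishes at $z_0$ with inward normal derivative $-|\varphi'_{\tau,w_0}(z_0)|$. A second-order Hadamard computation---equivalently, the observation that $\partial\Omega_{\tau',w_0}$ sweeps across $z_0$ with normal speed $|\varphi'_{\tau,w_0}(z_0)|/(4\hDelta Q(z_0))$---yields
\[
\partial_{\tau'}^2\Psi_{\tau'}\big|_{\tau'=\tau}(z_0)=-\frac{|\varphi'_{\tau,w_0}(z_0)|^2}{4\hDelta Q(z_0)}.
\]
Assembling these jets at the rescaled point, the pure $(\Re\xi)^2$ contributions cancel pleasantly, and one arrives at
\[
2m\Psi_{\tau'}(z_m(\xi))=-\bigl(as+\sqrt{2}\,\Re\xi\bigr)^2+\ordo(1),\qquad a:=\frac{|\varphi'_{\tau,w_0}(z_0)|}{2\sqrt{\hDelta Q(z_0)}},
\]
with $s:=k/\sqrt{m}$, while the prefactor converges to its boundary value $m^{1/2}\pi^{-1/2}|\varphi'_{\tau,w_0}(z_0)|[\hDelta Q(z_0)]^{1/2}$ by the boundary modulus formula of Theorem~\ref{thm:main1}.

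I then interpret the sum over $k$ as a Riemann sum with mesh $1/\sqrt{m}$ in the variable $s$, so that
\[
K_{m,n,w_0}(z_m,z_m)\e^{-2mQ(z_m)}\sim m\,\pi^{-1/2}|\varphi'_{\tau,w_0}(z_0)|[\hDelta Q(z_0)]^{1/2}\int_0^{\infty}\e^{-(as+\sqrt{2}\,\Re\xi)^2}\diff s.
\]
The substitution $u=as+\sqrt{2}\,\Re\xi$ converts the integral into $a^{-1}\sqrt{\pi}\,\mathrm{erf}(2\Re\xi)$, and inserting $a^{-1}=2\sqrt{\hDelta Q(z_0)}/|\varphi'_{\tau,w_0}(z_0)|$ collapses every remaining factor to $2m\hDelta Q(z_0)\,\mathrm{erf}(2\Re\xi)$. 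Dividing by $2m\hDelta Q(z_0)$ gives the corollary, with uniform convergence on compact $\xi$-sets.

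The main technical obstacle will be justifying these approximations uniformly in $k$. First, Theorem~\ref{thm:main1} has to apply as $\tau'$ ranges continuously through a neighborhood of $\tau$; the $(I_0,w_0)$-admissibility hypothesis guarantees smooth $\tau'$-dependence of $\calB_{0,\tau',w_0}$, $\calQ_{\tau',w_0}$, and $\varphi_{\tau',w_0}$, making the asymptotic expansion locally uniform in $\tau'$. Second, one must verify that $z_m(\xi)$ belongs to the extended domain $\Omega_{\tau',w_0,m}$: for $\tau'>\tau$ this is automatic because the family $\{\Omega_{\tau',w_0}\}$ grows to contain $z_m(\xi)$, while near $\tau'=\tau$ the slack $Am^{-1/2}(\log m)^{1/2}$ in Theorem~\ref{thm:main1} accommodates $z_m(\xi)$ for any compact range of $\Re\xi$. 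Finally, replacing the sum by the integral is controlled by Euler--Maclaurin applied to the smooth Gaussian integrand; both the discretization error and the tail $k\gtrsim\sqrt{m}\log m$ (where $\tau'$ may leave $I_0$) are rendered negligible by the Gaussian factor.
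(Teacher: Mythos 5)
Your proposal matches the paper's own proof sketch: both expand the partial Bergman kernel along the diagonal via Theorem~\ref{thm:expansion}, substitute the leading-order root function asymptotics of Theorem~\ref{thm:main1} (equivalently Corollary~\ref{cor:main1}) with $\tau'=\tau+k/m$, Taylor-expand the obstacle gap $\breve Q_{\tau',w_0}-Q$ about $(\tau,z_0)$ using the Hele-Shaw propagation of Proposition~\ref{prop:free-bdry}, and recognize the resulting sum of translated Gaussians as a Riemann sum for the error function integral. Your write-up supplies the Hadamard-variation computations (identifying $\partial_{\tau'}\Psi_{\tau'}$ with the Green's function and pinning down the second $\tau'$-derivative via the normal speed $|\varphi'_{\tau,w_0}(z_0)|/(4\hDelta Q(z_0))$) that the paper states only implicitly by reference to the Taylor expansion and to the analogous argument in \cite{HW}, and the final bookkeeping---including the change of variable $u=as+\sqrt2\,\Re\xi$ and the uniformity/tail estimates---is correct and consistent with the boundary modulus $|\calB_{0,\tau,w_0}|=\pi^{-1/4}[\hDelta Q]^{1/4}$ from Theorem~\ref{thm:main1}.
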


\subsection{Comments on the exposition and a guide to the 
proofs}
In Section~\ref{s:off-spectral}, we explain the proofs of the main results, 
which are Theorems \ref{thm:main-prel} 
and \ref{thm:main1} together with Corollary~\ref{cor:interface}. 
Our approach is analogous to that of \cite{HW},
and we make an effort to explain exactly what needs to be modified for 
the techniques to apply in the present context.
As in \cite{HW}, the proofs involve the construction of a flow of loops 
near the spectral boundary, 
called the {\em orthogonal foliation flow}. 
The flow is constructed in an iterative procedure, which produces both 
the terms in the asymptotic expansion of the coherent state as well as
the successive terms in the expansion of the flow. 
The flow $\{\gamma_t\}_t$ is constructed to have the following property. 
If the normal velocity is denoted by $\nu$, we want the Szeg{\H{o}} kernel 
for the point $w_0$ with respect to the interior of the curve $\gamma_t$ 
and the induced weighted arc-length measure
$\e^{-2mQ}\nu\diff\sigma$ on $\gamma_t$ to be stationary in $t$. In turn, 
this allows us to glue together the Szeg\H{o} kernels to form the Bergman 
kernel.
In order to obtain the coefficient functions of the expansions in a more 
straightforward fashion, 
we apply the method developed in \cite{HW} which is based on Laplace's method. 
Another important ingredient, which allows localization to a neighborhood 
of each off-spectral component, is H{\"o}rmander's $\bar\partial$-estimate, 
suitably modified to the given needs.
Since this is the method which allows us to change the geometry drastically, 
we sometimes refer to it as {\em $\bar\partial$-surgery}.

In Section~\ref{s:off-spectral-general}, we develop a more general version 
of the foliation flow lemma, which allows us to introduce a conformal 
factor in the area form. 
To avoid unnecessary repetition, both the orthogonal foliation flow and 
the algorithm for computing the coefficient functions in the asymptotic 
expansions are explained only in this more general setting.
We supply a couple of applications of this extension, 
including a stability result for the root functions and
the orthogonal polynomials under a $\frac1m$-perturbation of the potential $Q$ 
(Theorems~\ref{thm:twist-root} and \ref{thm:twist-onp}). 

\subsection{Acknowledgements} 
We wish to thank Steve Zelditch, Bo Berndtsson, and Robert Berman for their 
interest in this work. In addition we should thank the anonymous referee for 
his or her helpful efforts. 

\section{Off-spectral expansions of normalized kernels} 
\label{s:off-spectral}
\subsection{A family of obstacle problems and evolution 
of the spectrum}
\label{ss:obst}
The spectral droplets \eqref{eq:calS} and the partial analogues 
\eqref{eq:calStau} were defined earlier. From that point of view, 
the spectral droplet $\calS$ is the instance $\tau=0$ of the partial spectral
droplets $\calS_{\tau,w_0}$. We should like to point out here that the partial
spectral droplet $\calS_{\tau,w_0}$ emerges as the full spectrum under a 
perturbation of the potential $Q$. To see this, we consider the perturbed
potential
\[
\tilde Q(z)=\tilde Q_{\tau,w_0}(z):=Q(z)-\tau\log|z-w_0|,
\]
and observe that the coincidence set $\tilde\calS$ for $\tilde Q$ equals 
the partial spectral droplet  $\calS_{\tau,w_0}$. 

The following proposition summarizes some basic properties of the function
$\hat{Q}_{\tau,w_0}$ given by \eqref{eq:obst-function-tau}. 
We refer to \cite{HM} for the necessary details.

\begin{prop}\label{prop:obst-sol}
Assume that $Q\in C^2(\C)$ is real-valued with the logarithmic growth of
condition {(i)} of Definition \ref{def:adm}. 
Then for each $\tau$ with $0\le\tau<\tau_Q$ and for each point $w_0\in\C$, 
the function $\hat{Q}_{\tau,w_0}$ is a 
subharmonic function in the plane $\C$ which is $C^{1,1}$-smooth off $w_0$, 
and harmonic on $\C\setminus(\calS_{\tau,w_0}\cup\{w_0\})$. 
Near the point $w_0$ we have 
\[
\hat{Q}_{\tau,w_0}(z)=\tau\log \lvert z-w_0\rvert+\Ordo(1).
\]
\end{prop}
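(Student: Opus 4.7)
The plan is to reduce the statement for general $\tau \geq 0$ to the classical case $\tau=0$ via the perturbation $\tilde{Q}(z) := Q(z) - \tau\log|z-w_0|$ highlighted in the paragraph preceding the proposition. First I would establish a bijective correspondence between the convex cones $\mathrm{SH}_{\tau,w_0}(\C)$ and $\mathrm{SH}(\C)$ via the map $q \mapsto \tilde{q} := q - \tau\log|\cdot-w_0|$. The non-trivial direction uses the removable singularity theorem for subharmonic functions: if $q \in \mathrm{SH}_{\tau,w_0}(\C)$, then $\tilde{q}$ is subharmonic on $\C\setminus\{w_0\}$ and bounded above near $w_0$ by the defining growth condition, so it extends to a subharmonic function on the entire plane. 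Conversely, any $\tilde{q} \in \mathrm{SH}(\C)$ yields $q = \tilde{q} + \tau\log|\cdot-w_0| \in \mathrm{SH}_{\tau,w_0}(\C)$.

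Under this bijection, the constraint $q \leq Q$ translates to $\tilde{q} \leq \tilde{Q}$, so the two obstacle problems are equivalent. Taking pointwise suprema on $\C\setminus\{w_0\}$ yields the identity
\[
\hat{Q}_{\tau,w_0}(z) = \hat{\tilde{Q}}(z) + \tau\log|z-w_0|,
\]
where $\hat{\tilde{Q}}$ is the classical envelope for $\tilde{Q}$. The remaining task is to apply the standard obstacle-problem theory from \cite{HM} to $\tilde{Q}$: since $\tilde{Q}$ is $C^2$-smooth off $w_0$, and since the hypothesis $\tau<\tau_Q$ guarantees $\liminf_{|z|\to+\infty}\tilde{Q}(z)/\log|z|>0$, one obtains that $\hat{\tilde{Q}}$ is subharmonic on $\C$, $C^{1,1}$-smooth on $\C$, and harmonic off its coincidence set, which is precisely $\calS_{\tau,w_0}$.

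Translating back via the displayed identity yields all four assertions. Subharmonicity of $\hat{Q}_{\tau,w_0}$ on $\C$ follows because $\tau\log|\cdot-w_0|$ is subharmonic and sums of subharmonic functions are subharmonic. The $C^{1,1}$-regularity off $w_0$ is inherited from $\hat{\tilde{Q}}$, together with the real-analyticity of $\tau\log|\cdot-w_0|$ on $\C\setminus\{w_0\}$. Harmonicity on $\C\setminus(\calS_{\tau,w_0}\cup\{w_0\})$ follows from the harmonicity of $\hat{\tilde{Q}}$ off $\calS_{\tau,w_0}$ combined with the harmonicity of the logarithm off $w_0$. Finally, the asymptotic $\hat{Q}_{\tau,w_0}(z)=\tau\log|z-w_0|+\Ordo(1)$ near $w_0$ is just the local boundedness of $\hat{\tilde{Q}}$ at $w_0$.

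The main obstacle I anticipate is verifying that the classical obstacle-problem theorem of \cite{HM} applies cleanly to the perturbed potential $\tilde{Q}$, which is not globally $C^{1,1}$ because of its logarithmic singularity at $w_0$. This is largely a technical matter: the usual $C^{1,1}$-regularity results are local in nature and transfer to any open set avoiding $w_0$, while near $w_0$ the constraint $\tilde{q}\leq\tilde{Q}$ is vacuous (as $\tilde{Q}\to+\infty$), so $\hat{\tilde{Q}}$ is automatically harmonic on a punctured neighborhood of $w_0$ and extends subharmonically across $w_0$ by the same removability argument as above. Apart from this localization step, the argument is a matter of translating between $Q$ and $\tilde{Q}$.
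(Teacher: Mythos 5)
Your proof is correct and follows essentially the route the paper itself indicates: the paper offers no argument beyond a citation of \cite{HM}, but the paragraph preceding the proposition introduces exactly your perturbation $\tilde Q_{\tau,w_0}=Q-\tau\log|z-w_0|$ and observes that its coincidence set is $\calS_{\tau,w_0}$. Your removable-singularity argument making the correspondence between $\mathrm{SH}_{\tau,w_0}(\C)$ and $\mathrm{SH}(\C)$ precise, together with the observation that the constraint is inactive near $w_0$ so that the envelope is harmonic and bounded there, correctly supplies the details the paper delegates to \cite{HM}.
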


The evolution of the free boundaries $\partial\calS_{\tau,w_0}$, which is of 
fundamental importance for our understanding of the properties of the 
normalized reproducing kernels, is summarized in the following.

\begin{prop}
\label{prop:free-bdry}
The continuous chain of off-spectral components 
$\Omega_{\tau,w_0}$ for $\tau\in I_0$ deform according to weighted 
Laplacian growth with weight $2\hDelta Q$, that is, for 
$\tau,\tau'\in I_0$ with $\tau'<\tau$, and for any bounded harmonic 
function $h$ on $\Omega_{\tau,w_0}$, we have that
\[
\int_{\Omega_{\tau,w_0}\setminus\Omega_{\tau',w_0}}h\,2\hDelta Q\diffA
=(\tau-\tau')h(w_0).
\]
Fix a point $\zeta\in\partial\Omega_{\tau,w_0}$, and denote for real 
$\varepsilon$ by $\zeta_{\varepsilon,w_0}$ the point closest to $\zeta$ in the 
intersection
\[
(\zeta+\nu_\tau(\zeta)\R_+)\cap \partial\Omega_{\tau-\varepsilon,w_0},
\]
where $\nu_\tau(\zeta)\in\T$ points in the inward normal direction at $\zeta$
with respect to $\Omega_{\tau,w_0}$.
Then we have that
\[
\zeta_{\varepsilon}=\zeta+\varepsilon\, \nu_\tau(\zeta)
\frac{\lvert \varphi'_{\tau,w_0}(\zeta)\rvert}{4\hDelta Q(\zeta)}
+\Ordo(\varepsilon^2),\quad \varepsilon\to0,
\]
and the outer normal ${\rm n}_{\tau-\varepsilon,w_0}(\zeta_\varepsilon)$ satisfies
\[
{\rm n}_{\tau-\varepsilon,w_0}(\zeta_\varepsilon)={\rm n}_{\tau,w_0}(\zeta)
+\Ordo(\varepsilon).
\]
\end{prop}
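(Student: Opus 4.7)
The plan is to prove the Richardson-type identity (the first displayed formula in the proposition) by applying Green's identity to the difference $u := \hat Q_{\tau,w_0}-\hat Q_{\tau',w_0}$, and then to deduce the pointwise normal-velocity expansion by linearizing this identity in $\tau$ and comparing it with a Poisson representation at $w_0$. The structure of $u$ for $\tau'<\tau$ in $I_0$ that I would use is as follows: by Proposition~\ref{prop:obst-sol} and the nesting $\calS_{\tau,w_0}\subset\calS_{\tau',w_0}$ (valid by the monotonicity of the obstacle family), one checks that $u\equiv 0$ on $\C\setminus\Omega_{\tau,w_0}$; that $u$ is $C^{1,1}$ off $w_0$, and in particular $u=|\nabla u|=0$ on $\partial\Omega_{\tau,w_0}$; that $\hDelta u=-\hDelta Q$ on the shell $\Omega_{\tau,w_0}\setminus\Omega_{\tau',w_0}$, where $\hat Q_{\tau,w_0}$ is harmonic while $\hat Q_{\tau',w_0}=Q$; that $\hDelta u=0$ on $\Omega_{\tau',w_0}\setminus\{w_0\}$; and that $u(z)=(\tau-\tau')\log|z-w_0|+\Ordo(1)$ near $w_0$.

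For the Richardson identity I would apply Green's identity on $\Omega_{\tau,w_0}\setminus\overline{B_\delta(w_0)}$ against an arbitrary bounded harmonic $h$ on $\Omega_{\tau,w_0}$, and let $\delta\to 0^+$. The outer boundary contribution along $\partial\Omega_{\tau,w_0}$ vanishes because $u=|\nabla u|=0$ there, while the contribution from $\partial B_\delta(w_0)$ tends, by the residue of $\log|z-w_0|$, to a multiple of $(\tau-\tau')h(w_0)$. Combining this with the shell contribution $-\int h\,\hDelta Q\,\diffA$ and accounting for the normalization of $\hDelta\log|z-w_0|$ relative to $\diffA$ yields
\[
\int_{\Omega_{\tau,w_0}\setminus\Omega_{\tau',w_0}}h\cdot 2\hDelta Q\,\diffA=(\tau-\tau')h(w_0),
\]
first for smooth harmonic $h$, and then by density for all bounded harmonic $h$.

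For the pointwise formula I set $\tau'=\tau-\varepsilon$ and invoke the smooth flow hypothesis of $(I_0,w_0)$-admissibility to parametrize the shell, modulo $\Ordo(\varepsilon^2)$, as the tubular neighborhood $\{\zeta+t\,\nu_\tau(\zeta)V_\tau(\zeta):\zeta\in\partial\Omega_{\tau,w_0},\,0\le t\le\varepsilon\}$, for an unknown positive normal velocity $V_\tau$. Inserting this into the Richardson identity, dividing by $\varepsilon$, and comparing with the Poisson representation $h(w_0)=\int_{\partial\Omega_{\tau,w_0}}h(\zeta)\,P_{w_0}(\zeta)\,\diffs(\zeta)$, in which $P_{w_0}$ is a universal constant multiple of $|\varphi_{\tau,w_0}'(\zeta)|$ obtained by pulling back the mean-value formula on $\D$ through $\varphi_{\tau,w_0}$, one arrives at an identity of the form $\int_{\partial\Omega_{\tau,w_0}}h\cdot\big(2\hDelta Q\,V_\tau-\mathrm{const}\cdot|\varphi_{\tau,w_0}'|\big)\diffs=0$. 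Since the traces of bounded harmonic $h$ are dense in $C(\partial\Omega_{\tau,w_0})$, the integrand must vanish pointwise, and we read off $V_\tau(\zeta)=|\varphi_{\tau,w_0}'(\zeta)|/(4\hDelta Q(\zeta))$; the $\Ordo(\varepsilon^2)$ remainder is the second-order Taylor error of the smooth flow, and the assertion $\mathrm{n}_{\tau-\varepsilon,w_0}(\zeta_\varepsilon)=\mathrm{n}_{\tau,w_0}(\zeta)+\Ordo(\varepsilon)$ follows from the smooth dependence of the boundary curves on $\tau$. The main obstacle I anticipate is the careful distributional bookkeeping around the pole at $w_0$: tracking the normalization constants (the factor $1/4$ in $\hDelta=\partial\bar\partial$, the normalization of $\diffA$, the conventions for $\diffs$, and the point-mass normalization of $\hDelta\log|z-w_0|$) so that the factor $2$ in $2\hDelta Q$ and the coefficient $1/4$ in $V_\tau$ emerge with exactly the right constants.
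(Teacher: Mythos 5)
Your proof is correct and, at bottom, takes the same route as the paper. For the Richardson identity, the paper applies Green's formula on $\Omega_{\tau,w_0}$ twice, replacing $Q$ by $\hat Q_{\tau,w_0}$ in the boundary integral (legitimate because $Q=\hat Q_{\tau,w_0}$ and $\nabla Q=\nabla\hat Q_{\tau,w_0}$ on $\partial\Omega_{\tau,w_0}$ by $C^{1,1}$ regularity of the obstacle solution), reads off $\int_{\Omega_{\tau,w_0}}h\,\hDelta Q\,\diffA=\tfrac{\tau}{2}h(w_0)$ from the logarithmic pole of $\hat Q_{\tau,w_0}$ at $w_0$, and then subtracts the $\tau$ and $\tau'$ versions. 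You instead apply Green's identity once to the difference $u=\hat Q_{\tau,w_0}-\hat Q_{\tau',w_0}$ on the punctured domain $\Omega_{\tau,w_0}\setminus\overline{B_\delta(w_0)}$ and let $\delta\to0^+$. These are interchangeable bookkeepings of the same computation. One small caution: the assertion ``$u\equiv 0$ on $\C\setminus\Omega_{\tau,w_0}$'' is a bit more than you can claim in general (if $\calS_{\tau,w_0}^c$ has components other than $\Omega_{\tau,w_0}$, the two obstacle solutions need not coincide there), but your Green's identity only requires $u=|\nabla u|=0$ on $\partial\Omega_{\tau,w_0}$, which does hold, so the argument is unaffected. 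For the pointwise normal-velocity formula, the paper simply cites Lemma~2.3.1 of \cite{HW}; your argument --- linearize the Richardson identity in $\tau$ using the tubular parametrization of the shell, compare with the Poisson representation $h(w_0)=\int_{\partial\Omega_{\tau,w_0}}h\,|\varphi_{\tau,w_0}'|\,\diffs$ obtained by conformal pull-back of the mean-value property, and conclude by density of harmonic traces --- is the standard derivation of the Hele-Shaw velocity law and fills in precisely what the citation elides. The concluding statement $\mathrm{n}_{\tau-\varepsilon,w_0}(\zeta_\varepsilon)=\mathrm{n}_{\tau,w_0}(\zeta)+\Ordo(\varepsilon)$ indeed follows from the smooth-flow hypothesis of $(I_0,w_0)$-admissibility as you indicate.
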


\begin{proof}
That the domains deform according to Hele-Shaw flow is a direct 
consequence of the
relation of $\Omega_{\tau,w_0}$ to the obstacle problem. 
To see how it follows, assume that $h$ is harmonic on $\Omega_{\tau,w_0}$ and
$C^2$-smooth up to the boundary, and apply Green's formula to obtain
\begin{multline*}
\int_{\Omega_{\tau,w_0}}h(z)\hDelta Q(z)\diffA(z)=
\frac{1}{4\pi}\int_{\partial\Omega_{\tau,w_0}}
\Big(h(z)\partial_{\rm n}Q(z)-Q(z)\partial_{\rm n}h(z)\Big)\lvert \diff z\rvert 
\\
= \frac{1}{4\pi}\int_{\partial\Omega_{\tau,w_0}}
\Big(h(z)\partial_{\rm n}\hat{Q}_{\tau,w_0}(z)-\hat{Q}_{\tau,w_0}(z)
\partial_{\rm n}h(z)\Big)
\lvert \diff z\rvert 
\\
=\int_{\Omega_{\tau,w_0}}h(z) \hDelta\hat{Q}_{\tau,w_0}\diffA(z),
\end{multline*}
where the latter integral is understood in the sense of distribution theory.
As $\hat{Q}_{\tau,w_0}$ is a harmonic perturbation of $\tau$ times the 
Green function for $\Omega_{\tau,w_0}$, the result follows by writing
$\int_{\Omega_{\tau,w_0}\setminus\Omega_{\tau',w_0}}h\hDelta Q\diffA$ as the 
difference of two integrals of the above form, and by approximation of bounded
harmonic functions by harmonic functions $C^2$-smooth up to the boundary.

The second part follows along the lines of \cite[Lemma~2.3.1]{HW}.
\end{proof}

We turn next to an off-spectral growth bound for weighted
holomorphic functions.

\begin{prop}
\label{prop:growth}
Assume that $Q$ is admissible and denote by $\calK_{\tau,w_0}$ a closed subset 
of the interior of $\calS_{\tau,w_0}$.
Then there exist constants $c_0$ and $C_0$ such that for any 
$f\in A^2_{mQ,n,w_0}(\C)$ it holds that
\[
\lvert f(z)\rvert\le C_0\,m^{\frac12}\e^{m\hat{Q}_{\tau,w_0}}
\big\lVert 1_{\calK_{\tau,w_0}^{\mathrm{c}}}\,f\big\rVert_{mQ},\qquad 
\mathrm{dist}(z,\calK_{\tau,w_0})\ge c_0m^{-\frac12}.
\]
In case $\calK_{\tau,w_0}=\emptyset$, the estimate holds globally.
\end{prop}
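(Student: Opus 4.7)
My plan is to combine a local submean value estimate for weighted holomorphic functions (which gives the bound inside the spectrum where $\hat Q_{\tau,w_0}=Q$) with a maximum principle argument (which transports the bound off the spectrum, using that $\hat Q_{\tau,w_0}$ is harmonic on $\C\setminus(\calS_{\tau,w_0}\cup\{w_0\})$ by Proposition~\ref{prop:obst-sol}). First I would establish the standard Bergman pointwise-from-$L^2$ bound
\[
\lvert f(z)\rvert^2\e^{-2mQ(z)}\le C_1 m\int_{D(z,m^{-1/2})}\lvert f(w)\rvert^2\e^{-2mQ(w)}\diffA(w),
\]
with $C_1$ depending only on a local sup-bound on $\hDelta Q$. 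This is obtained by inspecting $u=\log\lvert f\rvert^2-2mQ$: as a distribution $\hDelta u\ge-2m\lVert\hDelta Q\rVert_{L^\infty(D(z,1))}$ (the zeros of $f$ only contribute positively), so $u+\frac{mM}{2}\lvert\cdot-z\rvert^2$ is subharmonic and Jensen's inequality on $D(z,m^{-1/2})$ delivers the claim, the quadratic correction amounting to a harmless $\Ordo(1)$ factor. When $\mathrm{dist}(z,\calK_{\tau,w_0})\ge c_0 m^{-1/2}$ with $c_0\ge 1$, the disk $D(z,m^{-1/2})$ is disjoint from $\calK_{\tau,w_0}$, so the above becomes
\[
\lvert f(z)\rvert^2\e^{-2mQ(z)}\le C_1 m\,\big\lVert 1_{\calK_{\tau,w_0}^{\mathrm c}}\,f\big\rVert_{mQ}^2.
\]
For $z\in\calS_{\tau,w_0}$ this is already the desired bound, since $\hat Q_{\tau,w_0}=Q$ on the spectrum.

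To handle $z\in\calS_{\tau,w_0}^{\mathrm c}$ I would consider
\[
v(z):=\log\lvert f(z)\rvert^2-2m\hat Q_{\tau,w_0}(z).
\]
By Proposition~\ref{prop:obst-sol}, $\hat Q_{\tau,w_0}$ is harmonic on $\C\setminus(\calS_{\tau,w_0}\cup\{w_0\})$, so $v$ is subharmonic there. At $w_0$ (relevant when $\tau>0$), the logarithmic singularity $2\tau m\log\lvert z-w_0\rvert$ of $2m\hat Q_{\tau,w_0}$ is precisely cancelled by the contribution from $\log\lvert f\rvert^2$, since $f\in A^2_{mQ,n,w_0}$ vanishes to order $n=\tau m$ at $w_0$. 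Hence $v$ extends subharmonically across $w_0$, and is subharmonic on all of $\calS_{\tau,w_0}^{\mathrm c}$. Since $\tau_Q>0$, the submean value inequality applied at large $\lvert z\rvert$ together with the finiteness of $\lVert f\rVert_{mQ}$ gives $v(z)\to-\infty$ as $\lvert z\rvert\to\infty$. By the maximum principle on each off-spectral component, $v$ attains its supremum on $\partial\calS_{\tau,w_0}$, where $\hat Q_{\tau,w_0}=Q$ and each point lies at distance at least $\mathrm{dist}(\partial\calS_{\tau,w_0},\calK_{\tau,w_0})>0$ from the compact $\calK_{\tau,w_0}\subset\mathrm{int}\,\calS_{\tau,w_0}$---which exceeds $c_0m^{-1/2}$ for all large $m$. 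The spectral bound just obtained therefore controls $v$ on $\partial\calS_{\tau,w_0}$ by $\log(C_1 m\lVert 1_{\calK_{\tau,w_0}^{\mathrm c}}f\rVert_{mQ}^2)$, and the off-spectral estimate follows. The case $\calK_{\tau,w_0}=\emptyset$ is subsumed, with no distance restriction on $z$.

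The only real obstacle is the removable-singularity step at $w_0$, handled cleanly by matching the vanishing order of $f$ with the $\tau$-log singularity of $\hat Q_{\tau,w_0}$; the decay of $v$ at infinity follows from the growth condition $\tau_Q>0$, and the constants $C_1$ and $c_0$ above absorb into those of the statement.
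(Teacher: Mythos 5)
Your strategy coincides with the one the paper intends: the proof given there consists precisely of the local sub-mean-value estimate (Lemma~2.2.1 of \cite{HW}, going back to \cite{ahm1}) combined with the maximum principle, and your write-up supplies both ingredients. The local estimate via the logarithmic subharmonicity of $\lvert f\rvert^2\e^{-2mQ}$ up to a Gaussian correction is correct; so is the observation that on $\calS_{\tau,w_0}$ one may replace $Q$ by $\hat Q_{\tau,w_0}$ for free once $D(z,m^{-1/2})$ avoids $\calK_{\tau,w_0}$; and the removable-singularity step at $w_0$ is handled properly, since $f$ vanishing to order at least $n=\tau m$ together with $\hat Q_{\tau,w_0}(z)=\tau\log\lvert z-w_0\rvert+\Ordo(1)$ makes $v=\log\lvert f\rvert^2-2m\hat Q_{\tau,w_0}$ bounded above near $w_0$, hence subharmonically extendable.

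The gap is in your treatment of the point at infinity. You claim that $v(z)\to-\infty$ as $\lvert z\rvert\to\infty$ ``by the sub-mean-value inequality and the finiteness of $\lVert f\rVert_{mQ}$,'' but that inequality controls $\log\lvert f\rvert^2-2mQ$, not $v$; the two differ by $2m(Q-\hat Q_{\tau,w_0})\ge 0$, which is strictly positive and typically growing in the off-spectral region, so the upper bound you actually derive tends to $+\infty$, not $-\infty$. Consequently the maximum principle on an unbounded component of $\calS_{\tau,w_0}^{\mathrm c}$ is not justified as written. Two standard repairs are available. For $\calK_{\tau,w_0}=\emptyset$ one can bypass the maximum principle altogether: the function $u:=\frac{1}{2m}\log\bigl(\lvert f\rvert^2/(C_1m\lVert f\rVert_{mQ}^2)\bigr)$ is subharmonic on $\C$, satisfies $u\le Q$ everywhere by the local estimate, and has the singularity $u\le\tau\log\lvert z-w_0\rvert+\Ordo(1)$ at $w_0$, so $u\le\hat Q_{\tau,w_0}$ by the very definition \eqref{eq:obst-function-tau} of the obstacle function as a supremum. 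For general $\calK_{\tau,w_0}$ one then uses this global bound to see that $v$ is at least \emph{bounded above} on each unbounded component, after which the extended maximum principle for bounded-above subharmonic functions (the finite boundary $\partial\calS_{\tau,w_0}$ being non-polar) yields the conclusion from the boundary values alone. A smaller point in the same spirit: your appeal to $\mathrm{dist}(\partial\calS_{\tau,w_0},\calK_{\tau,w_0})>0$ uses compactness of $\calK_{\tau,w_0}$, which the statement does not grant (it is merely closed in $\C$ and contained in the interior), and the uniformity of your constant $C_1$ over an unbounded spectrum requires a word about $\sup_{D(z,1)}\hDelta Q$; the paper is equally silent on both, but they should be acknowledged.
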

\begin{proof}
This follows immediately by an application of the maximum principle,
together with the result of Lemma~2.2.1 in \cite{HW}, originating 
from \cite{ahm1}.
\end{proof}

\subsection{Some auxilliary functions}
There are a number of functions related to the potential $Q$ that will be 
useful in the sequel.
We denote by $\calQ_{\tau,w_0}$ the bounded 
holomorphic function on $\Omega_{\tau,w_0}$ whose real part
on the boundary curve $\partial\Omega_{\tau,w_0}$ equals $Q$,
uniquely determined by the requirement that $\Im \calQ_{\tau,w_0}(w_0)=0$. 
We also need the function $\breve{Q}_{\tau,w_0}$, which denotes the harmonic 
extension of $\hat{Q}_{\tau,w_0}$ across the boundary of the off-spectral 
component $\Omega_{\tau,w_0}$.
These two functions are connected via
\begin{equation}
\breve{Q}_{\tau,w_0}(z)=\tau\log\lvert \varphi_{\tau,w_0}(z)\rvert
+\Re\calQ_{\tau,w_0}(z).
\label{eq:Qbreve101}
\end{equation} 
Since we work with $(\tau,w_0)$-admissible potentials $Q$,  the off-spectral 
component $\Omega_{\tau,w_0}$ is a bounded simply 
connected domain with real-analytically smooth Jordan curve boundary. 
Without loss of generality, we may hence assume that
$\calQ_{\tau,w_0}$, $\breve{Q}_{\tau,w_0}$  as well as the conformal mapping 
$\varphi_{\tau,w_0}$ extend to a common domain $\Omega_0$, containing the closure
$\bar{\Omega}_{\tau,w_0}$. By possibly shrinking the interval $I_0$, we 
may moreover choose the set $\Omega_0$ to be independent of the parameter 
$\tau\in I_0$.

\subsection{Canonical positioning}
An elementary but important observation for the main result of 
\cite{HW} is that we may ignore a compact subset of the interior of the 
compact spectral droplet $\calS_\tau$ associated to polynomial Bergman kernels 
when we study the asymptotic expansions of the orthogonal polynomials 
$P_{m,n}$ (with $\tau=\frac{n}{m}$). Indeed, only the behavior in a small 
neighborhood of the complement $\calS_\tau^{\mathrm{c}}$ is of 
interest, and the $\bar\partial$-surgery methods allow us to disregard the 
rest. 
The physical intuition behind this is the interpretation of the
probability density $|P_{m,n}|^2\e^{-2mQ}$ as the net effect of adding one more
particle to the system, and since the positions in the interior of the droplet 
are already occupied we would expect the net effect to occur near
the boundary.  
The fact that we may restrict our attention to a simply connected 
proper subset of the Riemann sphere $\hat\C$ breaks up the rigidity and allows 
us to apply a conformal mapping to place ourselves in an appropriate
model situation. 

In the present context, we consider the Riemann mapping $\varphi_{\tau,w_0}$
which maps the off-spectral region $\Omega_{\tau,w_0}$ onto the unit disk 
$\D$, and 
has a conformal extension to a neighborhood of $\overline{\Omega}_{\tau,w_0}$.
We let the {\em canonical positioning operator} for the point $w_0$ 
with respect to
the off-spectral component $\Omega_{\tau,w_0}$ be given by
\begin{equation}
\label{eq:c-pos}
\Vop_{m,n,w_0}[v]=\varphi_{\tau,w_0}'(z)(\varphi_{\tau,w_0})^n
\e^{m\calQ_{\tau,w_0}}v\circ\varphi_{\tau,w_0},
\end{equation}
and put 
\begin{equation}
R_{\tau,w_0}:=(Q-\breve{Q}_{\tau})\circ\varphi_{\tau,w_0}^{-1}.
\label{eq-Rtau}
\end{equation}
An essential property of this operator is that $\Vop_{m,n,w_0}$ acts 
isometrically
from $L^2(\e^{-2mR_{\tau,w_0}})$ to $L^2(\e^{-2mQ})$, wherever it is well-defined.

As for the coherent states, we will analyze them in terms of the canonical 
positioning operator $\Vop_{m,n,w_0}$.
This simplifies the geometry of $\Omega_{\tau,w_0}$ by mapping it to the 
unit disk $\D$, and simplifies the weight.
Indeed, the function $R_{\tau,w_0}$ is flat to order $2$ at the unit circle, 
and consequently the weight 
$\e^{-2mR_{\tau,w_0}}$ behaves like a Gaussian ridge. 

We summarize the properties of the operator $\Vop_{m,n,w_0}$ in the 
following proposition.
For a potential $V$ and a domain $\Omega$ with $w_0\in\Omega$, we denote by 
$A^2_{mV,n,w_0}(\Omega)$ the space of 
holomorphic functions on $\Omega$ which vanish to order $n$ at $w_0\in\Omega$, 
endowed with the topology of $L^2(\mathrm{e}^{-2mV},\Omega)$. 
In case $n=0$ we simply denote the space by $A^2_{mV}(\Omega)$.

\begin{prop}\label{prop:Vop} 
Let $Q$ be a $(\tau,w_0)$-admissible potential, and let $\Omega_{\tau,w_0}$
denote the corresponding off-spectral component. Moreover, let 
$R_{\tau, w_0}$ be given by \eqref{eq-Rtau}. Then, for $\hdelta>1$ 
sufficiently close to $1$, the operator $\Vop_{m,n,w_0}$ defines an 
invertible isometry
\[
\Vop_{m,n,w_0}:A^2_{mR_{\tau,w_0}}\big(\D(0,\hdelta)\big)\to 
A^2_{mQ,n,w_0}(\Omega_{0}),
\]
if $\Omega_0=\varphi_{\tau,w_0}^{-1}(\D(0,\hdelta))$.  
The isometry property 
remains valid in the context of weighted $L^2$-spaces as well.
\end{prop}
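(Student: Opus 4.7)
The plan is to verify three claims in turn: (a) $\Vop_{m,n,w_0}$ sends $A^2_{mR_{\tau,w_0}}\bigl(\D(0,\hdelta)\bigr)$ into $A^2_{mQ,n,w_0}(\Omega_0)$, (b) the map is isometric and the computation in fact works in the ambient $L^2$-spaces, and (c) the map is surjective via an explicit inverse. Throughout, I take the convention $\tau = n/m$, which is built into the setup since the same $\tau$ appears both in the subscripts of $\varphi_{\tau,w_0}$, $\calQ_{\tau,w_0}$, and in the exponent $n$ of $(\varphi_{\tau,w_0})^n$.

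For (a), holomorphy of $\Vop_{m,n,w_0}[v]$ is automatic from the product and composition structure, provided $\hdelta>1$ is close enough to $1$ that $\varphi_{\tau,w_0}^{-1}$ and $\calQ_{\tau,w_0}$ are both holomorphic on $\D(0,\hdelta)$; this uses the real-analytic smoothness of $\partial\Omega_{\tau,w_0}$. The factor $(\varphi_{\tau,w_0})^n$ together with $\varphi_{\tau,w_0}(w_0)=0$ and $\varphi_{\tau,w_0}'(w_0)>0$ supplies the requisite vanishing of order $n$ at $w_0$. Membership in $L^2(\e^{-2mQ},\Omega_0)$ will follow from the isometry in (b).

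For (b), I would compute pointwise. Using \eqref{eq:Qbreve101} and $n=\tau m$, the exponent rearranges as
\[
2n\log|\varphi_{\tau,w_0}(z)|+2m\Re\calQ_{\tau,w_0}(z)-2mQ(z)=-2m(Q-\breve{Q}_{\tau,w_0})(z)=-2mR_{\tau,w_0}(\varphi_{\tau,w_0}(z)),
\]
so that
\[
|\Vop_{m,n,w_0}[v](z)|^{2}\e^{-2mQ(z)}=|\varphi_{\tau,w_0}'(z)|^{2}\,|v(\varphi_{\tau,w_0}(z))|^{2}\,\e^{-2mR_{\tau,w_0}(\varphi_{\tau,w_0}(z))}.
\]
Since $|\varphi_{\tau,w_0}'(z)|^{2}$ is the Jacobian of $z\mapsto\zeta=\varphi_{\tau,w_0}(z)$, integrating over $\Omega_0$ and changing variables yields precisely the squared norm of $v$ in $L^{2}(\e^{-2mR_{\tau,w_0}},\D(0,\hdelta))$. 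The computation never invoked holomorphy of $v$, which secures the analogous statement for weighted $L^2$-spaces.

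For (c), I would write down the candidate inverse
\[
\Vop_{m,n,w_0}^{-1}[u](\zeta)=\frac{u(\varphi_{\tau,w_0}^{-1}(\zeta))}{\varphi_{\tau,w_0}'(\varphi_{\tau,w_0}^{-1}(\zeta))\,\zeta^{n}\,\e^{m\calQ_{\tau,w_0}(\varphi_{\tau,w_0}^{-1}(\zeta))}},
\]
and observe that the apparent singularity at $\zeta=0$ is removable because $u$ vanishes to order $n$ at $w_0=\varphi_{\tau,w_0}^{-1}(0)$, while the remaining factors in the denominator are zero-free on a neighborhood of $\overline{\D(0,\hdelta)}$. The essentially only obstacle is bookkeeping around the choice of $\hdelta$: one must ensure that $\hdelta>1$ is small enough that the real-analytic extensions of $\varphi_{\tau,w_0}^{\pm 1}$, $\calQ_{\tau,w_0}$ and $\breve{Q}_{\tau,w_0}$ are simultaneously defined on the relevant sets, which is handled by the conventions fixed in the preceding subsection.
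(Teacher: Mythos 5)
Your proof is correct and fills in exactly the computation that the paper's one-sentence proof ("the conclusion is immediate by the defining normalizations of the conformal mapping $\varphi_{\tau,w_0}$") leaves to the reader. The key point --- unwinding the exponent via \eqref{eq:Qbreve101} with $n=\tau m$ and then changing variables with Jacobian $\lvert\varphi_{\tau,w_0}'\rvert^{2}$ --- is precisely what the authors have in mind, so this is the same approach made explicit rather than a different route.
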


\begin{proof}
The conclusion is immediate by the defining normalizations of the conformal 
mapping $\varphi_{\tau,w_0}$.
\end{proof}

The following definition is an analogue of Definition~3.1.2 in \cite{HW}. 
We denote by $\Omega_1$ a domain containing the closure of the off-spectral 
component $\Omega_{\tau,w_0}$, and let $\chi_{0,\tau}$ denote a $C^\infty$-smooth 
cut-off function which vanishes off $\Omega_1$, and equals $1$ in a 
neighborhood of the closure of $\Omega_{\tau,w_0}$.

\begin{defn}\label{def:q-pol-orth}
Let $\kappa$ be a positive integer.
A sequence $\{F_{m,n,w_0}\}_{m,n}$ of holomorphic functions on $\Omega_0$ 
is called a \emph{sequence of approximate root functions of order $n$ at $w_0$ 
of accuracy} $\kappa$ for the space $A^2_{mQ,n,w_0}$
if the following conditions are met as $m\to+\infty$ 
while $\tau=\frac{n}{m}\in {I}_{w_0}$:

\noindent (i) For all $f\in A^2_{mQ,n+1,w_0}$, we have the approximate 
orthogonality
\[
\int_{\C}\chi_{0,\tau}F_{m,n,w_0}\bar{f}\,\e^{-2mQ}\diffA=
\Ordo(m^{-\kappa-\frac13}\lVert p\rVert_{mQ}).
\]

\noindent (ii) The approximate root functions have norm approximately equal 
to $1$,
\[
\int_{\C}\chi_{0,\tau}^2\lvert F_{m,n,w_0}(z)\rvert^2\e^{-2mQ}(z)\diffA(z)=
1+\Ordo(m^{-\kappa-\frac13}).
\]

\noindent (iii) The functions $F_{m,n,w_0}$ are approximately real and positive
at $w_0$, in the sense that the leading coefficient 
$a_{m,n,w_0}=\lim_{z\to w_0} (z-w_0)^{-n}F_{m,n,w_0}(z)$ satisfies 
\break{$\Re a_{m,n,w_0}>0$} and
\[
\frac{\Im a_{m,n,w_0}}{\Re a_{m,n,w_0}}=\Ordo(m^{-\kappa-\frac{1}{12}}).
\]
\end{defn}

We remark that the exponents in the above error terms are chosen for 
reasons of convenience, related to the correction scheme of 
Subsection~\ref{ss:d-bar}.

\subsection{The orthogonal foliation flow}
The orthogonal foliation flow $\{\gamma_{m,n,t}\}_t$ is a smooth flow 
of closed curves near the unit circle $\T$, originally formulated in 
\cite{HW} in the context of orthogonal polynomials.
The defining property is that $P_{m,n}$ should be approximately orthogonal 
to the lower degree polynomials along the curves 
$\Gamma_{m,n,t}=\phi_\tau^{-1}(\gamma_{m,n,t})$ with respect 
to the induced measure $\e^{-2mQ}\nu_{\mathrm{n}}\diffs$, where 
$\nu_{\mathrm{n}}$ denotes the normal velocity of the flow $\{\Gamma_{m,n,t}\}_t$
and $\diffs$ denotes normalized arc length measure.

\medskip

\noindent{\sc Smoothness classes and polarization of smooth functions.} 
We fix the smoothness class of the weights under consideration, and adapt
Definition~4.2.1 in \cite{HW} to the the present setting. 
First, we need the notion of polarization, which applies to real-analytically 
smooth functions. If $R(z)$ is real-analytic, there exists a function of two 
complex variables, denoted by $R(z,w)$, which is holomorphic in $(z,\bar w)$
in a neighborhood of the diagonal, with diagonal restriction $R(z,z)=R(z)$. 
The function $R(z,w)$ is referred to as the \emph{polarization} of $R(z)$, 
and it is uniquely determined by its diagonal restriction $R(z)$. 
If $R(z,w)$ is such a polarization of a function $R(z)$ which is 
real-analytically smooth near the circle $\T$ and quadratically flat there, 
then $R(z)=(1-|z|^2)^2R_0(z)$ and in polarized form 
$R(z,w)=(1-z\bar w)^2R_0(z,w)$, where $R_0(z,w)$ is holomorphic in 
$(z,\bar w)$ in a neighborhood of the diagonal where both variables are near 
$\T$. The function $R_0(z,w)$ is then the polarization of $R_0(z)$.

\begin{defn}
\label{def:class-W}
For real numbers $\hdelta,\sigma$ with $\hdelta>1$ and $\sigma>0$, 
we denote by $\mathfrak{W}(\hdelta,\sigma)$ the class of non-negative  
$C^2$-smooth functions $R$ on $\D(0,\hdelta)$ such that $R$ is 
quadratically flat on $\T$ with $\hDelta R|_\T>0$ and satisfies
\[
\inf_{z\in \D(0,\hdelta)}
R_0(z)=\frac{R(z)}{(1-|z|^2)^2}=\alpha(R)>0,
\]
while on the annulus 
\[
\mathbb{A}(\hdelta^{-1},\hdelta):=\{z\in\C:\,\hdelta^{-1}<|z|<\hdelta\}
\]
$R$ is real-analytically smooth and has a polarization $R(z,w)$ which is 
holomorphic in $(z,\bar w)$ on the $2\sigma$-fattened diagonal annulus
\[
\hat{\mathbb{A}}(\hdelta,\sigma)
=\big\{(z,w)\in\mathbb{A}(\hdelta^{-1},\hdelta)\times 
\mathbb{A}(\hdelta^{-1},\hdelta)\;:\;\lvert z-w\rvert\le 2\sigma\big\},
\]
and factors as $R(z,w)=(1-z\bar w)^2R_0(z,w)$, where $R_0(z,w)$ is
holomorphic $(z,\bar w)$ on the set 
$\hat{\mathbb{A}}(\hdelta,\sigma)$, and bounded 
and bounded away from zero there.
We say that a subset $S\subset \mathfrak{W}(\hdelta,\sigma)$ is a 
\emph{uniform family}, provided that for each $R\in S$, the corresponding
$R_0(z,w)$ is uniformly bounded and bounded away from $0$ on 
$\hat{\mathbb{A}}(\hdelta,\sigma)$
while the constant $\alpha(R)$ is uniformly bounded away from $0$.
\end{defn}

The point with above definition is that it lets us encode 
uniformity properties of the potentials $R_{\tau,w_0}$ with respect to the 
parameter $\tau$ and the point $w_0$.

For a polarized function $f(z,w)$, we let $f_\T(z)$ denote the restriction of 
$f(z,z)$ for $z\in\T$, wherever it is well-defined. We recall from 
Proposition~4.2.2 of \cite{HW} that if $f(z,{w})$ is holomorphic in 
$(z,\bar w)$ on the $2\sigma$-fattened diagonal annulus 
$\hat{\mathbb{A}}(\hdelta,\sigma)$ and if the parameters meet 
$1<\hdelta\le \sqrt{1+\sigma^2}+\sigma$, then it follows that the function 
$f_{\T}$ extends holomorphically to the annulus 
$\mathbb{A}(\hdelta^{-1},\hdelta)$. We may need to restrict the
numbers $\hdelta,\sigma$ further. Indeed, it turns out that we need 
that the functions
$\log\hDelta R$, $\hat{R}=\sqrt{R}$ 
(chosen to be positive inside the unit circle and negative outside) 
as well as $\log(-z\partial_z\hat{R})$ have polarizations which are 
holomorphic in $(z,\bar{w})$ for $(z,w)\in\hat{\mathbb{A}}(\hdelta,\sigma)$ and 
uniformly bounded there as well. 
If $R$ belongs to a uniform family of $\mathfrak{W}(\hdelta_0,\sigma_0)$, 
then there exist 
$(\hdelta_1,\sigma_1)$ such that these properties
hold for the polarizations with 
$\hdelta=\hdelta_1$ and $\sigma=\sigma_1$ (See Proposition~4.2.3 of \cite{HW}),
where we moreover require that $1<\hdelta_1\le \sqrt{1+\sigma_1^2}+\sigma_1$.

\begin{lem}
\label{lem:R-weight} 
Let $\calK$ be a compact subset of each of 
the domains $\Omega_{\tau,w_0}$, where $\tau\in I_0$.
Then there exist constants $\hdelta,\sigma$ with $\hdelta>1$ and $\sigma>0$,
 such that the 
collection of weights $R_{\tau,w_0}$ with $w_0\subset\calK$ and $\tau\in I_0$
is a uniform family in $\mathfrak{W}(\hdelta,\sigma)$.
\end{lem}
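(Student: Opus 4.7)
The plan is to verify each clause of Definition~\ref{def:class-W} for $R_{\tau,w_0}$ and then promote the resulting bounds to uniformity over the compact parameter set $I_0\times\calK$. The starting point is to analyze $R_{\tau,w_0}=(Q-\breve Q_{\tau,w_0})\circ\varphi_{\tau,w_0}^{-1}$ in an annular neighborhood of $\T$. Condition (ii) of $(\tau,w_0)$-admissibility gives that $Q$ is real-analytically smooth in a neighborhood of $\partial\Omega_{\tau,w_0}$, while $\hat Q_{\tau,w_0}$ is harmonic on a one-sided neighborhood of $\partial\Omega_{\tau,w_0}\setminus\{w_0\}$ (Proposition~\ref{prop:obst-sol}) with real-analytic boundary data, so its harmonic extension $\breve Q_{\tau,w_0}$ is real-analytic across $\partial\Omega_{\tau,w_0}$. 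Since the boundary is a real-analytic Jordan curve, $\varphi_{\tau,w_0}^{-1}$ extends conformally across $\T$, so $R_{\tau,w_0}$ is real-analytically smooth on some annulus $\mathbb{A}(\hdelta^{-1},\hdelta)$.

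Next I would check quadratic flatness and positive Laplacian on $\T$. The identity $\hat Q_{\tau,w_0}=Q$ on the contact set combined with $\hat Q_{\tau,w_0}\le Q$ and the $C^{1,1}$-regularity of $\hat Q_{\tau,w_0}$ forces $Q-\hat Q_{\tau,w_0}$ and its gradient to vanish on $\partial\Omega_{\tau,w_0}$, so after pullback $R_{\tau,w_0}$ vanishes to second order on $\T$; moreover, since $\breve Q_{\tau,w_0}$ is harmonic near $\partial\Omega_{\tau,w_0}$, the boundary Laplacian of $Q-\breve Q_{\tau,w_0}$ equals $4\hDelta Q$, which is strictly positive by admissibility. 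Transferring the polarization of the real-analytic function $R_{\tau,w_0}$ to a $2\sigma$-fattened diagonal annulus $\hat{\mathbb{A}}(\hdelta,\sigma)$ (for small enough $\sigma$), the quadratic vanishing on $\T$ translates to a double zero of $R_{\tau,w_0}(z,w)$ along the anti-diagonal $z\bar w=1$, so the quotient
\[
R_{0;\tau,w_0}(z,w):=\frac{R_{\tau,w_0}(z,w)}{(1-z\bar w)^2}
\]
is holomorphic in $(z,\bar w)$ on $\hat{\mathbb{A}}(\hdelta,\sigma)$; its diagonal restriction is bounded and bounded away from zero near $\T$ by the positive-Laplacian computation.

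To obtain the global lower bound $\alpha(R_{\tau,w_0})>0$ on all of $\D(0,\hdelta)$, I would combine the above annular control with strict positivity inside $\D$. By the obstacle-problem structure, the contact set for $\hat Q_{\tau,w_0}$ is precisely $\calS_{\tau,w_0}$, so $Q>\hat Q_{\tau,w_0}$ on the open set $\Omega_{\tau,w_0}$; together with the quadratic boundary decay, this guarantees a positive lower bound for $R_{\tau,w_0}(z)/(1-|z|^2)^2$ on $\bar\D$. A matching bound in a thin outer strip follows directly from the boundary Taylor expansion. Choosing $\hdelta>1$ sufficiently small then yields $\alpha(R_{\tau,w_0})>0$ on $\D(0,\hdelta)$.

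Finally, uniformity in $(\tau,w_0)\in I_0\times\calK$ comes from compactness and the smooth-flow hypothesis: by $(I_0,w_0)$-admissibility, the domains $\Omega_{\tau,w_0}$ vary smoothly with real-analytic boundaries whose defining data depend continuously on the parameters, so $\varphi_{\tau,w_0}^{-1}$ and $\breve Q_{\tau,w_0}$ admit a common neighborhood of definition and their analyticity radii are bounded below. This lets me fix a single pair $(\hdelta,\sigma)$ for which all the polarizations are jointly holomorphic on $\hat{\mathbb{A}}(\hdelta,\sigma)$ and uniformly bounded and bounded away from zero, and for which $\alpha(R_{\tau,w_0})$ is bounded below by a positive constant. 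The main obstacle I anticipate is precisely this uniform extraction — one must ensure that the radius of real-analytic extension of $\breve Q_{\tau,w_0}$ and the conformal extension radius of $\varphi_{\tau,w_0}$ do not collapse as $(\tau,w_0)$ approaches the boundary of $I_0\times\calK$. This is handled by the continuous dependence of these objects on the parameters together with the compactness of the parameter set.
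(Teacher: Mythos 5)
Your proposal is correct and follows essentially the same route as the paper, which simply defers to the analogous claim in \cite{HW}: one verifies each clause of Definition~\ref{def:class-W} directly (real-analyticity of $R_{\tau,w_0}$ near $\T$ from the conformal extension of $\varphi_{\tau,w_0}^{-1}$ and the analytic continuation of $\breve Q_{\tau,w_0}$, quadratic flatness from the $C^{1,1}$ matching of $Q$ and $\hat Q_{\tau,w_0}$ on the free boundary, positivity of $R_0$ from $\hDelta Q>0$ at the interface and $Q>\hat Q_{\tau,w_0}$ off-spectrally), and then extracts uniform constants $\hdelta,\sigma,\alpha$ by compactness of $I_0\times\calK$ and smooth dependence of the domains. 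The only adjustment versus \cite{HW} is the passage from the exterior to the interior conformal frame, which you have handled correctly.
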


This is completely analogous to the corresponding claim in of \cite{HW}, 
which was expressed in the context of an exterior conformal mapping.

\medskip

\noindent{\sc The orthogonal foliation flow near the unit circle.} 
The existence of the orthogonal foliation flow around the circle $\T$ and 
the asymptotic expansion of the root functions after canonical positioning 
are stated in the following lemma (compare with Lemma~4.1.2 in \cite{HW}).
For the proof, we refer to the sketched proof of 
Lemma~\ref{lem:main-flow-general}
below, as well as the complete proof of Lemma~4.1.2 in \cite{HW}, for 
the case of orthogonal
polynomials. 

\begin{lem}\label{lem:main-flow}
Fix an accuracy parameter $\kappa$ and let 
$R\in \mathfrak{W}(\hdelta_0,\sigma_0)$.
Then, if $\hdelta_1$ is as above,
there exist a radius
$\hdelta_2$ with $1<\hdelta_2<\hdelta_1$, bounded holomorphic functions 
$f_{s}$ on $\D(0,\hdelta_1)$ of the form
\[
f_{s}(z)=\sum_{0\le j\le\kappa}s^{j}
B_{j}(z),\qquad z\in\D(0,\hdelta'),
\]
and conformal mappings $\psi_{s,t}$ from $\D(0,\rho_2)$ into the plane
given by 
\[
\psi_{s,t}=\psi_{0,t}+\sum_{\substack{(j,l)\in\indsett_{2\kappa+1}\\ j\ge 1}}s^j t^l 
\hat{\psi}_{j,l}
\] 
such that for $s,t$ small enough, the domains 
$\psi_{s,t}\big(\D\big)$ grow with $t$, while they remain 
contained in $\D(0,\hdelta_1)$. Moreover, for 
$\zeta\in\T$ we have that
\begin{multline}
\label{eq:flow-eq}
\lvert f_{s}\circ \psi_{s,t}(\zeta)\rvert^2\,\e^{-2s^{-1}R\circ\psi_{s,t}}
\Re\big(\bar{\zeta}\partial_t\psi_{s,t}(\zeta)\overline{\psi_{s,t}'(\zeta)}\big)
\\
=\e^{-s^{-1}t^2}\Big\{(4\pi)^{-\frac12}+\Ordo\big(\lvert s\rvert^{\kappa+\frac12}
+\lvert t\rvert^{2\kappa+1}\big)\Big\}.
\end{multline}
For small positive $s$, when $t$ varies in the interval $[-\beta_s,\beta_s]$ 
with $\beta_s:=s^{1/2}\log \frac{1}{s}$, the flow of loops 
$\{\psi_{s,t}(\T)\}_t$ cover a neighborhood of the circle $\T$ of width 
proportional to $\beta_s$ smoothly.
In addition, the first term $B_0$ is zero-free, positive at the origin, 
and has modulus $|B_0|=\pi^{-\frac14}(\hDelta R)^{\frac14}$ on $\T$. 
The other terms $B_j$ are
all real-valued at the origin. 
The implied constant in \eqref{eq:flow-eq} is uniformly bounded, provided that 
$R$ is confined to a uniform family of $\mathfrak{W}(\hdelta_0,\sigma_0)$. 
\end{lem}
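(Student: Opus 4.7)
The plan is to construct $f_s$ and $\psi_{s,t}$ simultaneously by order-by-order matching in $s$, adapting the construction of Lemma~4.1.2 in \cite{HW} from the exterior-disk (orthogonal polynomial) setting to the interior-disk setting of the present lemma.

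For the leading order, I would begin with the function $\hat R=\sqrt R$, chosen positive inside $\T$ and negative outside, which by the polarization hypothesis on $R\in\mathfrak{W}(\hdelta_0,\sigma_0)$ extends real-analytically across $\T$ with non-vanishing gradient there. I would define $\psi_{0,t}$ as the flow that pushes $\T$ along its real normals so that $\hat R\circ\psi_{0,t}\equiv -t$ on $\T$; the exponential factor in \eqref{eq:flow-eq} then reduces to $\e^{-s^{-1}t^2}$ up to an $\Ordo(s^{-1}|t|^3)$ error in the exponent, while the normal-velocity factor $\Re(\bar\zeta\,\partial_t\psi_{0,t}\,\overline{\psi_{0,t}'})|_\T$ is a scalar multiple of $(\hDelta R)^{-1/2}$. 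The leading coefficient $B_0$ is the unique zero-free holomorphic function in $\D$ with $B_0(0)>0$ and boundary modulus $\pi^{-1/4}(\hDelta R)^{1/4}$ on $\T$, obtained by exponentiating the Herglotz extension of $\tfrac14\log(\hDelta R/\pi)$; this matches the leading order $(4\pi)^{-1/2}\e^{-s^{-1}t^2}$ of \eqref{eq:flow-eq}.

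For the higher-order corrections, I would take logarithms in \eqref{eq:flow-eq}, insert the ansatz $f_s=\sum_{j=0}^{\kappa}s^j B_j$ and $\psi_{s,t}=\psi_{0,t}+\sum_{j\ge 1,\,l}s^j t^l\hat\psi_{j,l}$, and equate coefficients as a formal double power series in $(s,t)$. At each order $s^j$, restriction to $\T$ yields a pair of coupled scalar relations: a modulus equation that, by Herglotz/Cauchy inversion on $\T$, recovers $B_j$ from data built out of $B_0,\ldots,B_{j-1}$ and the $\hat\psi_{i,l}$ with $i<j$; and an argument equation that recovers the $\hat\psi_{j,l}$ at this order. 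The polarization hypothesis in Definition~\ref{def:class-W} ensures that every function arising here extends holomorphically in $(z,\bar w)$ to a fattened diagonal annulus, so the splittings are well defined and bounded uniformly in the family. The residual additive real freedom in $\log B_j$ is pinned down by the convention $B_j(0)\in\R$, and the recursion terminates at $j=\kappa$ with an error of size $\Ordo(s^{\kappa+1/2}+t^{2\kappa+1})$.

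For the geometric assertion, $\psi_{0,t}(\T)$ is the level set $\{\hat R=-t\}$, at normal distance $\asymp|t|$ from $\T$, while the higher-order correction $\sum_{j\ge 1}s^j t^l\hat\psi_{j,l}$ contributes only $\Ordo(s|t|)$ uniformly; thus for $t\in[-\beta_s,\beta_s]$ with $\beta_s=s^{1/2}\log(1/s)$, the map $(t,\zeta)\mapsto\psi_{s,t}(\zeta)$ is a diffeomorphism onto a $\beta_s$-neighborhood of $\T$, and confinement in $\D(0,\hdelta_1)$ follows by choosing $\hdelta_2$ small enough. The main obstacle, as in \cite{HW}, is verifying the triangularity of the linear system arising at each order: both $B_j$ and the family $\{\hat\psi_{j,l}\}_l$ couple nontrivially through the Laplace-type evaluation of $\e^{-2s^{-1}R\circ\psi_{s,t}}$, and one must check that this coupling is lower-triangular in $j$ so that the induction closes. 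Uniformity in a uniform family of $\mathfrak{W}(\hdelta_0,\sigma_0)$ then follows because the relevant Cauchy and holomorphic-extension operators on $\T$ have operator norms controlled purely by $(\hdelta_0,\sigma_0)$.
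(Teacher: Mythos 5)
Your overall strategy matches the paper's: the authors defer the proof to the sketched proof of Lemma~\ref{lem:main-flow-general} (the version with conformal factor $W$, specialized to $W\equiv 1$) and to the complete proof in \cite{HW}, and that proof proceeds exactly by the level-curve/Herglotz iteration you outline, with the dependence structure expressed via the $\frakF_{j,l}$ expressions. Two issues in your sketch, however, are worth flagging.

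First, there is a normalization error. Matching the limit $s\to0^+$ in the flow equation forces $2R\circ\psi_{0,t}\equiv t^2$ on $\T$, i.e.\ $\hat R\circ\psi_{0,t}\equiv -t/\sqrt2$ with the sign convention that $\hat R>0$ inside $\T$ and $\hat R<0$ outside. You wrote $\hat R\circ\psi_{0,t}\equiv-t$; that would give $\e^{-2s^{-1}t^2}$ instead of $\e^{-s^{-1}t^2}$ on the right-hand side of \eqref{eq:flow-eq}, so the Gaussian profile comes out with the wrong width. The downstream normalization of $|B_0|$ and the Riemann-sum computation for the error function in Corollary~\ref{cor:interface} both depend on getting this constant right.

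Second, and more seriously, the description of $\psi_{0,t}$ as ``the flow that pushes $\T$ along its real normals'' is not what is needed. The normal push sends $\zeta\in\T$ along the inner/outer normal to the nearest point on the level curve $\Gamma_t=\{R=\tfrac12 t^2\}$; this has the right image, but it is not a holomorphic map. The lemma requires $\psi_{s,t}$ to be conformal mappings, and in the paper $\psi_{0,t}$ is the Riemann map from $\D$ onto the bounded component $D_t$ enclosed by $\Gamma_t$, normalized by $\psi_{0,t}(0)=0$ and $\psi_{0,t}'(0)>0$. The distinction matters twice: the coefficient functions $\hat\psi_{0,l}$ in the ansatz are Taylor coefficients of a conformal map (obtained via Herglotz integrals as in Proposition~4.6.1 of \cite{HW}), and, more importantly, the later application of the foliation flow (the change of variables in \eqref{eq:comput-fol-coord} followed by the mean value property for $q_{m,n}\circ\psi_{m,n,t}$) works only because each $\psi_{s,t}$ is conformal, so that compositions with holomorphic functions remain holomorphic. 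With a normal-velocity flow this step would break. Once these two points are corrected, your iterative Herglotz splitting and the triangularity argument are exactly the ($\frakF$-i)--($\frakF$-ii) dependence structure set out in the proof of Lemma~\ref{lem:main-flow-general}.
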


\subsection{$\bar\partial$-corrections and asymptotic 
expansions of root functions}
\label{ss:d-bar}
In this section, we supply a proof of the main result, Theorem~\ref{thm:main1}.
The proof consists of two parts. First, we construct a family of approximate 
root function of a given order and accuracy, 
after which we apply H{\"o}rmander-type $\bar\partial$-estimates to correct 
these approximate kernels to entire functions. The precise result needed 
for the correction scheme runs as follows.

\begin{prop}\label{prop:horm}
Let $f\in L^\infty(\calS_{\tau,w_0})$, where $\tau=\frac{n}{m}$, 
and denote by $u=u_{m,n,w_0}$ the norm-minimal solution in 
$L^2_{m\hat{Q}_{\tau,w_0}}$ to the problem
\[
\bar\partial u=f,
\]
among the functions which vanish at $w_0$ to order $n$:
$|u(z)|=\Ordo(|z-w_0|^n)$ around $w_0$. 
Then $u$ meets the bound
\[
\int_{\C}\lvert u\rvert^2\e^{-2m\hat{Q}_{\tau,w_0}}\diffA\le 
\frac{1}{2m}\int_{\calS_{\tau,w_0}}\lvert f\rvert^2
\frac{\e^{-2mQ}}{\hDelta Q}\diffA.
\]
\end{prop}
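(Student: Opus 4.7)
The plan is to derive the bound from a suitable form of H\"ormander's $\bar\partial$-theorem with the subharmonic weight $\phi:=m\hat{Q}_{\tau,w_0}$. We extend $f$ by zero off $\calS_{\tau,w_0}$, so that $f\in L^\infty(\C)$ with compact support. Since $\hat{Q}_{\tau,w_0}=Q$ on $\calS_{\tau,w_0}$, the right-hand side of the claimed inequality may be rewritten as $\tfrac{1}{2m}\int_\C|f|^2\e^{-2\phi}(\hDelta Q)^{-1}\diffA$, which is precisely the form taken by H\"ormander's bound once we identify $\hDelta\phi$ with $m\hDelta Q$ on the support of $f$.

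First I would address the two regularity obstacles: the weight $\hat{Q}_{\tau,w_0}$ is only $C^{1,1}$ (not $C^2$), and it is harmonic rather than strictly subharmonic throughout $\calS_{\tau,w_0}^c\setminus\{w_0\}$. I would regularize by setting $\phi_\varepsilon:=\phi+m\varepsilon|z|^2$ for $\varepsilon>0$, so that $\hDelta\phi_\varepsilon\ge m\varepsilon$ uniformly on $\C$. Applying H\"ormander's $\bar\partial$-estimate (in its $C^{1,1}$ form, obtainable by further mollification of the weight) produces a solution $u_\varepsilon$ to $\bar\partial u_\varepsilon=f$ satisfying
\[
\int_\C |u_\varepsilon|^2\e^{-2\phi_\varepsilon}\diffA\le \int_\C |f|^2\frac{\e^{-2\phi_\varepsilon}}{2\hDelta\phi_\varepsilon}\diffA.
\]
Because $f$ is supported on $\calS_{\tau,w_0}$, where $\phi=mQ$, the right-hand side equals
\[
\frac{1}{2m}\int_{\calS_{\tau,w_0}} |f|^2\frac{\e^{-2mQ-2m\varepsilon|z|^2}}{\hDelta Q+\varepsilon}\diffA,
\]
which converges as $\varepsilon\to0^+$ to the target bound by dominated convergence, using hypothesis (ii) of admissibility (strict subharmonicity of $Q$ near $\partial\calS_{\tau,w_0}$) together with the boundedness of $f$.

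Next I would pass to a weak limit $u$ of $\{u_\varepsilon\}$ in $L^2(\C,\e^{-2m\hat{Q}_{\tau,w_0}}\diffA)$ along a subsequence, which produces a solution of $\bar\partial u=f$ that inherits the desired bound. The vanishing condition at $w_0$ comes for free: since $w_0\notin\calS_{\tau,w_0}$, we have $f\equiv0$ near $w_0$, so $u$ is holomorphic there, and Proposition~\ref{prop:obst-sol} gives $\e^{-2m\hat{Q}_{\tau,w_0}}\asymp|z-w_0|^{-2n}$ near $w_0$; consequently $L^2$-integrability against this weight forces $|u(z)|=\Ordo(|z-w_0|^n)$.

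Finally, the norm-minimal solution $u_{m,n,w_0}$ among admissible functions (which differ from the constructed $u$ by an element of the partial Bergman space $A^2_{mQ,n,w_0}$) has norm no greater than that of the $u$ produced above, and hence satisfies the stated inequality. The principal technical step is justifying H\"ormander's estimate for the non-$C^2$ weight $\phi$ and controlling the limit as $\varepsilon\to 0$; both rely only on standard regularization and weak compactness, with no essentially new input.
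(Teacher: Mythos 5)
Your proposal is correct and follows essentially the same route as the paper, which simply cites Corollary~2.4.2 of \cite{HW} — a H\"ormander-type $\bar\partial$-estimate for the subharmonic weight $m\hat{Q}_{\tau,w_0}$ — and notes that the vanishing to order $n$ at $w_0$ is forced by the logarithmic singularity $\hat{Q}_{\tau,w_0}(z)=\tau\log|z-w_0|+\Ordo(1)$ of Proposition~\ref{prop:obst-sol}. Your write-up just makes explicit the standard regularization and weak-limit technicalities that the cited corollary packages away.
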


This is an immediate consequence of Corollary~2.4.2 in \cite{HW}, and 
essentially amounts to H\"ormander's classical bound for the 
$\bar\partial$-equation in the given setting.

We turn to the proof of Theorem~\ref{thm:main1}. 

\begin{proof}[Sketch of proof of Theorem~\ref{thm:main1}]
As the proof is analogous to that of Theorems 1.3.3 and 1.3.4 in
\cite{HW}, we supply only an outline of the proof.
\\
\\
\noindent {\sc The construction of approximate root functions.} We apply 
Lemma~\ref{lem:main-flow} with $s=m^{-1}$ and 
$R=R_{\tau,w_0}$ to obtain a smooth flow $\gamma_{s,t}=\gamma_{m,n,t,w_0}$ 
of curves, as well as bounded holomorphic functions 
$f_{s}=f_{m,n,w_0}^{\langle\kappa\rangle}$ such that the flow equation 
\eqref{eq:flow-eq} is met. 
The sequence $\{B_j\}_j$ of bounded holomorphic functions 
produced by the lemma actually depend (smoothly) on 
the parameter $\tau$ and the root point $w_0$, so we put
$B_j=B_{j,\tau,w_0}$ and define
\begin{equation}
\label{def:hnm}
f_{m,n,w_0}^{\langle \kappa\rangle}=\sum_{j=0}^{\kappa}m^{-j}B_{j,\tau,w_0}.
\end{equation}
If we write
\[
\calB_{j,\tau,w_0}:=(\varphi_{\tau,w_0}')^{\frac{1}{2}}
B_{j,\tau,w_0}\circ\varphi_{\tau,w_0},
\]
it follows that 
\[
\kernel_{m,n,w_0}^{\langle \kappa\rangle}:=m^{\frac14}\Vop_{m,n,w_0}
[f_{m,n,w_0}^{\langle \kappa\rangle}]
\] 
has the claimed form.
It remains to show that $\kernel_{m,n,w_0}^{\langle \kappa\rangle}$
is a family of approximate root functions of order $n$ at $w_0$ 
with the stated uniformity property,
and to show that it is close to the true normalizing reproducing kernel.

We denote by $\calD_{m,n,w_0}$ the domain covered by the foliation flow,
over the parameter range $-\delta_m\le t\le\delta_m$, where 
$\delta_m:=m^{-\frac12}\log m$.
Moreover, we define the domain $\calE_{\tau,w_0}$ as the image of 
$\D(0,\hdelta'')$ under $\varphi_{\tau,w_0}^{-1}$, and let 
$\chi_{0}=\chi_{0,\tau,w_0}$ denote an an appropriately chosen smooth 
cut-off function, which takes the value $1$ on a neighborhood of 
$\bar\Omega_{\tau,w_0}$
and vanishes off $\calE_{\tau,w_0}$. 
If we let $\chi_{1}:=\chi_{0}\circ\varphi_{\tau,w_0}^{-1}$ denote the 
corresponding cut-off extended to vanish off $\D(0,\hdelta'')$, 
we may show that
\begin{equation}
\label{eq:norm-h-flow}
\int_{\calD_{m,n,w_0}}
\lvert f_{m,n,w_0}^{\langle \kappa\rangle}\rvert^2\e^{-2mR_{\tau,w_0}}
 \diffA(z)=m^{-\frac12}+\Ordo(m^{-\kappa-\frac{5}{6}}),
\end{equation}
as follows immediately from integration of the flow equation of 
Lemma~\ref{lem:main-flow},
as well as the estimate
\begin{equation}
\int_{\C\setminus\calD_{m,n,w_0}}\chi_{1}^2 
\lvert f_{m,n,w_0}^{\langle \kappa\rangle}\rvert^2\e^{-2mR_{\tau,w_0}}
 \diffA(z)=\Ordo(m^{-\alpha_0\log m}),
\label{eq:outsideflow1}
\end{equation}
which holds for some $\alpha_0>0$ as a consequence of the Gaussian 
ridge behavior of the function 
$\e^{-2mR_{\tau,w_0}}$ around the unit circle. We now observe that in 
view of \eqref{eq:norm-h-flow} and 
\eqref{eq:outsideflow1}, the isometric property of $\Vop_{m,n,w_0}$ implies that 
$\chi_0\kernel_{m,n,w_0}^{\langle \kappa\rangle}$ has norm $1+\Ordo(m^{-\kappa-\frac13})$ 
in $L^2_{mQ}$.

Let $g\in A^2_{mQ,n,w_0}$ be given, and put
$q=m^{-\frac14}\Vop_{m,n,w_0}^{-1}[g]$.
Then, by the isometric property of $\Vop_{m,n,w_0}$ and the estimate 
\eqref{eq:outsideflow1}, it follows that
\begin{multline*}
\int_{\C}\chi_{0}\,\kernel_{m,n,w_0}^{\langle \kappa\rangle}(z) \bar{g}(z)\,\e^{-2mQ}
\diffA(z) 
=m^{\frac12}\int_{\C}\chi_{1} f_{m,n,w_0}^{\langle \kappa\rangle}(z)\bar{q}(z)\,
\e^{-2mR_{\tau,w_0}}\diffA(z)\\
=m^{\frac12}\int_{\calD_{m,n,w_0}}\chi_{1} 
f_{m,n,w_0}^{\langle \kappa\rangle}(z)
\bar{q}(z)\,\e^{-2mR_{\tau,w_0}}\diffA(z)
+\Ordo\big(m^{-\frac{\alpha_0}{2}\log m+\frac14}\lVert g\rVert_{mQ}\big),
\end{multline*}
where we have applied the Cauchy-Schwarz inequality together with 
the estimate \eqref{eq:outsideflow1} to obtain the error term.
 
The function $f_{m,n,w_0}^{\langle \kappa\rangle}$ is zero-free up to the boundary in 
$\calD_{m,n,w_0}$ provided that $m$ is large enough, as the main term 
is bounded away from $0$ in modulus, 
and consecutive terms are much smaller. Also, for large enough $m$, 
it holds that $\chi_1=1$ on $\calD_{m,n,w_0}$. We now introduce the function 
\[
q_{m,n}:=\frac{q}{f_{m,n}^{\langle \kappa\rangle}}
\]
and integrate along the flow:
\begin{multline}\label{eq:comput-fol-coord}
m^{\frac12}\int_{\calD_{m,n,w_0}}\chi_{1} f_{m,n,w_0}^{\langle \kappa\rangle}(z)
\bar{q}(z)\,\e^{-2mR_{\tau,w_0}}\diffA(z)\\
=m^{\frac12}\int_{\calD_{m,n,w_0}}q_{m,n}(z)\lvert 
f_{m,n,w_0}^{\langle\kappa\rangle}(z)\rvert^2 
\e^{-2mR_{\tau,w_0}(z)}\diffA(z)
\\
=2m^{\frac12}\int_{-\delta_m}^{\delta_m}\int_{\T}
q_{m,n}\circ\psi_{m,n,t}
(\zeta)\big\lvert f_{m,n,w_0}^{\langle \kappa\rangle}\circ\psi_{m,n,t}
(\zeta)
\big\rvert^2\e^{-2m\,R_{\tau,w_0}\circ\psi_{m,n,t}(\zeta)}
\\
\times
\Re\big\{\bar{\zeta}\partial_t\psi_{m,n,t}(\zeta)
\overline{\psi_{m,n,t}'(\zeta)}\big\}\diffs(\zeta)\diff t
\\=2m^{\frac12}\int_{-\delta_m}^{\delta_m}
\int_{\T}q_{m,n}\circ\psi_{m,n,t}(\zeta)\Big\{(4\pi)^{-\frac12}\e^{-mt^2}
+\Ordo\big(m^{-\kappa-\frac13}\e^{-mt^2}\big)\Big\}\diffs(\zeta)\diff t,
\end{multline}
where the last step uses the flow equation of Lemma~\ref{lem:main-flow}.
We now make the crucial observation is that for fixed $t$,
the composition $q_{m,n}\circ\psi_{m,n,t}$ is 
holomorphic, so that we may apply the mean value property:
\[
\int_{\T}q_{m,n}\circ \psi_{m,n,t}\diffs=q_{m,n}\circ \psi_{m,n,t}(0)=q_{m,n}(0)
=\frac{q(0)}{m^{\frac14}f_{m,n}^{\langle\kappa\rangle}(0)}.
\]
Consequently, it follows that
\begin{multline}
2m^{\frac12}\int_{-\delta_m}^{\delta_m}
\int_{\T}q_{m,n}\circ\psi_{m,n,t}(\zeta)\Big\{(4\pi)^{-\frac12}\e^{-mt^2}
+\Ordo\big(m^{-\kappa-\frac13}\e^{-mt^2}\big)\Big\}\diffs(\zeta)\diff t
\\
=q_{m,n}(0)
\big(1+\Ordo(m^{-\log m})\big)
\\
+\Ordo\bigg(
m^{-\kappa+\frac1{6}}\int_{-\delta_m}^{\delta_m}\int_{\T}\lvert q_{m,n}
\circ\psi_{m,n,t}(\zeta)\rvert \diffs(\zeta)\,\e^{-mt^2}\diff t\bigg)
\\
=q_{m,n}(0)
+\Ordo\big(m^{-\kappa-\frac1{3}}\|g\|_{mQ}\big)
=\frac{q(0)}{f_{m,n}^{\langle\kappa\rangle}(0)}
+\Ordo\big(m^{-\kappa-\frac1{3}}\|g\|_{mQ}\big).
\label{eq:flowcalc102}
\end{multline}
Here, in order to obtain the last estimate, we have used 
\eqref{eq:comput-fol-coord} backwards with $q_{m,n}$
replaced by $|q_{m,n}|$, and the estimate \eqref{eq:norm-h-flow} to obtain
\begin{multline*}
\pi^{-\frac12}\int_{-\delta_m}^{\delta_m}\int_{\T}\lvert q_{m,n}
\circ\psi_{m,n,t}(\zeta)\rvert \diffs(\zeta)\,\e^{-mt^2}\diff t
\\=\big(1+\Ordo(m^{-\kappa-\frac13})\big)\int_{\calD_{m,n,w_0}} 
\chi_{1} |f_{m,n,w_0}^{\langle \kappa\rangle}(z)
q(z)|\,\e^{-2mR_{\tau,w_0}}\diffA(z)
\le 2m^{-\frac12}\lVert g\rVert_{mQ}.
\end{multline*}
Now if $q(0)=0$, that is, if $g$ vanishes to order $n+1$ or higher
at $w_0$, then 
$\chi_0\kernel_{m,n,w_0}^{\langle \kappa\rangle}$ and $g$ are approximately
orthogonal in $L^2_{mQ}$. 

For further details regarding the above computations, we refer to 
Subsection 4.8 of \cite{HW}. 
\\
\\
\noindent {\sc The $\bar\partial$-correction scheme.}
The approximate normalized reproducing kernels are not globally defined,
and are consequently not elements of our Bergman spaces of entire functions. 
However, by applying the H{\"o}rmander-type $\bar\partial$-estimate of 
Proposition~\ref{prop:horm}, we obtain a solution $u=u_{m,n,w_0}$ to the equation
\[
\bar\partial u=\kernel_{m,n,q_0}^{\langle \kappa\rangle}\bar\partial\chi_{0}
\]
which exponential decay of the norm of $u_{m,n,w_0}$ in $L^2_{mQ}$.
By the proposition, it vanishes to order $n$ at the root point $w_0$, and
has exponentially small norm in $L^2_{mQ}=L^2(\C,\e^{-2mQ})$. 
The function 
\[
\kernel^\star_{m,n,w_0}:=\chi_{0}\kernel_{m,n,w_0}
^{\langle \kappa\rangle}-u_{m,n,w_0}
\]
is also an approximate normalized partial Bergman reproducing kernel 
of the correct accuracy, but this time it at least is an element of the 
right space,
\[
\kernel^\star_{m,n,w_0}\in A^2_{mQ,n,w_0}.
\]
We denote by $\Pop_{m,n+1,w_0}$ the orthogonal projection onto the subspace
$A^2_{mQ,n+1,w_0}$ of functions vanishing to order at least $n+1$, and put
\[
\tilde{\kernel}_{m,n,w_0}=\kernel^\star_{m,n,w_0}
-\Pop_{m,n+1,w_0}\kernel^\star_{m,n,w_0}.
\]
Here, we have the norm estimate
\begin{equation}\label{eq:norm-orth-proj}
\lVert \Pop_{m,n+1,w_0}\kernel^\star_{m,n,w_0}\rVert_{mQ} =\sup_{g\in A^2_{mQ,n+1,w_0}}
\frac{|\langle g,\kernel^\star_{m,n,w_0}\rangle_{mQ}|}{\lVert g\rVert_{mQ}}
=\Ordo(m^{-\kappa-\frac13})
\end{equation}
which shows that the correction is very small.
By construction, $\kernel_{m,n,w_0}^{\langle \kappa\rangle}$ vanishes precisely 
to the order $n$ at the root point $w_0$. Moreover, the small perturbations 
$u_{m,n,w_0}$ and $\Pop_{m,n+1,w_0}\kernel^\star_{m,n,w_0}$ vanish at least 
to order $n$ at $w_0$. 
It follows that $\tilde{\kernel}_{m,n,w_0}$ vanishes precisely to the 
correct order, that is
to say,
\[
\tilde{\kernel}_{m,n,w_0}(z)=C(z-w_0)^n+\Ordo(\lvert z-w_0\rvert^{n+1}),
\]
holds near $w_0$ for some complex constant $C\ne0$. The constant $C$ 
is close to being positive real,
since the $\bar\partial$-correction $u_{m,n,w_0}$ is small. Indeed, we have
$C=(1+\Ordo(\e^{-\alpha_1m}))\,C_1$ where the constant $C_1>0$ may depend 
on all the 
parameters but the parameter $\alpha_1>0$ is a uniform constant. 
Since the function $\tilde{\kernel}_{m,n,w_0}$ is automatically orthogonal 
to $A^2_{mQ,n+1,w_0}$,
it follows that $\tilde{\kernel}_{m,n,w_0}$ equals a scalar multiple of 
the true root function $\kernel_{m,n,w_0}$: 
\[
\tilde{\kernel}_{m,n,w_0}=c\,\kernel_{m,n,w_0},
\]
for some complex constant $c\ne0$. In view of the above, we conclude that 
$c=(1+\Ordo(\e^{-\alpha_1m}))\,c_1$, where $c_1>0$ may depend on all the 
parameters.
As $\tilde{\kernel}_{m,n,w_0}(w_0)$ is approximately real, it follows that
 $c=c'\gamma$, where $c'$ is real and positive, while 
 $\gamma=1+\Ordo(m^{-\kappa-\frac12})$.
It follows from \eqref{eq:norm-orth-proj} that we have the estimate
\[
\big\lVert \tilde{\kernel}_{m,n,w_0}-\chi_{0}\kernel_{m,n,w_0}^{\langle \kappa\rangle}
\big\rVert_{mQ}=
\Ordo(m^{-\kappa-\frac{1}{3}})
\]
and since 
\begin{equation}\label{eq:norm-approx-kern}
\|\chi_0\kernel_{m,n,w_0}^{\langle\kappa\rangle}\|_{mQ}=1+\Ordo(m^{-\kappa-\frac13}) 
\end{equation}
we obtain that positive constant $c_1$ has the asymptotics 
$c_1=1+\Ordo(m^{-\kappa-\frac1{3}})$, which allows to say that 
$\tilde{\kernel}_{m,n,w_0}$ and the true root function $\kernel_{m,n,w_0}$,
which differ by a multiplicative constant, are very close. 
It now follows that
\[
\big\lVert \kernel_{m,n,w_0}-
\chi_{0}\kernel_{m,n,w_0}^{\langle \kappa\rangle}\big\rVert_{mQ}=
\Ordo(m^{-\kappa-\frac{1}{3}}),
\]
so that $\kernel_{m,n,w_0}$ has the desired asymptotic expansion in norm. 
In view of Proposition~\ref{prop:growth}, the pointwise expansion 
is essentially immediate from the $L^2$-estimate, at least in the region 
$\Omega_{\tau,w_0,m}$ where
\[
\mathrm{dist}_{\C}(z,\Omega_{\tau,w_0})\le A\,m^{-\frac12}(\log m)^{\frac12},
\] 
which is where the functions $\breve{Q}_{\tau,w_0}$ and $\hat{Q}_{\tau,w_0}$
are comparable in the sense that
\[
0\le m(\hat{Q}_{\tau,w_0}-\breve{Q}_{\tau,w_0})\le A^2D\log m
\]
for some fixed positive constant $D$ depending only on $Q$.
The only remaining issue is that the error terms are slightly worse 
than claimed. 
However, by replacing $\kappa$ with an integer larger than 
$\kappa+2+A^2D$ and by deriving the expansion 
with the indicated higher accuracy, we conclude that the desired 
error terms may be obtained as well.

Turning to the norm control on the set $\Omega_{\tau,w_0,m}$, we note that 
by elementary Hilbert space methods, we have that
\begin{multline*}
\int_{\Omega_{\tau,w_0,m}}|\kernel_{m,n,w_0}|^2\e^{-2mQ}\diffA
=\int_{\Omega_{\tau,w_0,m}}|\kernel_{m,n,w_0}^{\langle \kappa\rangle}|^2\e^{-2mQ}\diffA
\\+\Ordo\Big(\lVert \kernel_{m,n,w_0}-
\chi_0\kernel_{m,n,w_0}^{\langle \kappa\rangle} \rVert_{2mQ}\Big).
\end{multline*}
We need to calculate the integral on the right-hand side:
\begin{multline*}
\int_{\Omega_{\tau,w_0,m}}|\kernel_{m,n,w_0}^{\langle \kappa\rangle}|^2\e^{-2mQ}\diffA=\\
\int_{\C}|\chi_0\,\kernel_{m,n,w_0}^{\langle \kappa\rangle}|^2\e^{-2mQ}\diffA
- \int_{\mathrm{supp}(\chi_0)\setminus\Omega_{\tau,m,w_0}}
|\chi_0\kernel_{m,n,w_0}^{\langle \kappa\rangle}|^2\e^{-2mQ}\diffA,
\end{multline*}
where we use that $\chi_0=1$ on $\Omega_{m,\tau,w_0}$ provided that 
$m$ is big enough.
In view of \eqref{eq:norm-approx-kern} the first integral on the 
right-hand side equals $1+\Ordo(m^{-\kappa-\frac13})$.
For any $z\in\mathrm{supp}(\chi_0)\setminus\Omega_{\tau,w_0,m}$ we have 
the bound
$2m(Q-\breve{Q}_{\tau,w_0})(z)\ge A^2D\log m$ where $D$ is the positive 
constant encountered previously. 
Consequently, we have the estimate
\begin{multline}
\int_{\mathrm{supp}(\chi_0)\setminus\Omega_m}
|\chi_0\kernel_{m,n,w_0}^{\langle\kappa\rangle}|^2\e^{-2mQ}\diffA
\\
\le C_0m^{\frac12}|\mathrm{supp}(\chi_0)\setminus\Omega_m|_{\mathrm{A}}\e^{-A^2D\log m}
=\Ordo(m^{-A^2D+\frac12}).
\end{multline}
If $A$ is chosen large enough, it follows that
\[
\int_{\Omega_{\tau,w_0,m}}|\kernel_{m,n,w_0}|^2\e^{-2mQ}\diffA=1+\Ordo(m^{-\kappa-\frac13}).
\]
This completes the outline of the proof.
\end{proof} 

\begin{proof}[Proof sketch of Theorem~\ref{thm:main-prel}]
The proof of Theorem~\ref{thm:main-prel} is entirely analogous to the 
above proof of Theorem~\ref{thm:main1},
essentially amounting to putting $\tau=0$ in the latter context. 
In the setting of Theorem~\ref{thm:main-prel}, there exists already a 
forbidden region around the point $w_0$,
and hence permits us to consider $\tau=0$. Indeed, the reason why we 
required that $\tau>0$ in the context of 
Theorem~\ref{thm:main1} was to allow for the instance when the 
off-spectral component $\Omega_{\tau,w_0}$ 
shrinks down to the point $\{w_0\}$ as $\tau\to0$.
\end{proof}

\subsection{Interface asymptotics of the Bergman density}
In this section we show how to obtain the error function transition 
behavior of Bergman densities at interfaces, where the interface may occurs
as a result of a region of negative curvature (understood as where
$\hDelta Q<0$ holds in terms of the potential $Q$) or as a consequence
of dealing with partial Bergman kernels. 
Here, we focus on the the partial Bergman kernel analysis. In fact, we
may think of the first instance of the full Bergman kernel as a special case
and maintain that it is covered by the presented material.

The following Corollary of the main theorem summarizes the asymptotics of 
normalized off-spectral partial Bergman kernels in a suitable form. The domains 
$\Omega_{\tau,w_0,m}$ are as in Theorem~\ref{thm:main1}, for a given positive 
parameter $A$ chosen suitably large.

\begin{cor}\label{cor:main1}
Under the assumptions of Theorem~\ref{thm:main1}, we have the asymptotics
\begin{multline*}
\lvert \kernel_{m,n,w_0}(z)\rvert^2\e^{-2mQ(z)}
\\
=\pi^{-\frac12}m^{\frac12}
\lvert \varphi'_{\tau,w_0}(z)\rvert\,
\e^{-2m(Q-\breve{Q}_{\tau,w_0})(z)}\big\{\e^{2\mathrm{Re}\,\calH_{Q,\tau,w_0}(z)}
+\Ordo(m^{-1})\big\},
\end{multline*}
on the domain $\Omega_{\tau,w_0,m}$, as $n=\tau m\to+\infty$ while 
$\tau\in I_0$,
where $\mathcal{H}_{Q,\tau,w_0}$ is the bounded holomorphic function on 
$\Omega_{\tau,w_0}$ whose real part equals $\frac14\log(2\hDelta Q)$ on the 
boundary, and is real-valued at the root point $w_0$.
\end{cor}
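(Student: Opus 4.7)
The plan is to unpack the asymptotic expansion of Theorem~\ref{thm:main1} by taking modulus squared, and to translate the various ingredients (the factor $(\varphi_{\tau,w_0})^n \e^{m\calQ_{\tau,w_0}}$, and the leading coefficient $\calB_{0,\tau,w_0}$) into the quantities appearing in the statement via the harmonic identity \eqref{eq:Qbreve101} and the boundary-modulus prescription of $\calB_{0,\tau,w_0}$.

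First, I would apply Theorem~\ref{thm:main1} with accuracy parameter $\kappa=1$ to get, on $\Omega_{\tau,w_0,m}$,
\[
|\kernel_{m,n,w_0}(z)|^2 = m^{\frac12}|\varphi_{\tau,w_0}'(z)|\,
|\varphi_{\tau,w_0}(z)|^{2n}\,\e^{2m\Re\calQ_{\tau,w_0}(z)}\,
\big\{|\calB_{0,\tau,w_0}(z)|^2+\Ordo(m^{-1})\big\}.
\]
Using $n=\tau m$ together with the identity \eqref{eq:Qbreve101}, namely
\[
\breve{Q}_{\tau,w_0}(z) = \tau\log|\varphi_{\tau,w_0}(z)| + \Re \calQ_{\tau,w_0}(z),
\]
I would replace the factor $|\varphi_{\tau,w_0}(z)|^{2n}\e^{2m\Re\calQ_{\tau,w_0}(z)}$ by $\e^{2m\breve{Q}_{\tau,w_0}(z)}$. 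Multiplying by $\e^{-2mQ(z)}$ then produces the exponential factor $\e^{-2m(Q-\breve{Q}_{\tau,w_0})(z)}$ appearing in the target formula. The error term $\Ordo(m^{-1})$ survives this manipulation uniformly on $\Omega_{\tau,w_0,m}$ since $|\calB_{0,\tau,w_0}|$ is bounded below away from zero there.

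The remaining step is to identify $|\calB_{0,\tau,w_0}(z)|^2$ with the quantity $\pi^{-\frac12}\e^{2\Re \calH_{Q,\tau,w_0}(z)}$ (up to the stated normalization). Since $\calB_{0,\tau,w_0}$ is zero-free and continuous up to the boundary on the simply connected domain $\Omega_{\tau,w_0}$, a single-valued branch of $\log\calB_{0,\tau,w_0}$ exists as a holomorphic function there. Its real part $\Re\log\calB_{0,\tau,w_0}=\log|\calB_{0,\tau,w_0}|$ is then the harmonic extension of its boundary values, which by Theorem~\ref{thm:main1} equal $\tfrac14\log(\hDelta Q)-\tfrac14\log\pi$ on $\partial\Omega_{\tau,w_0}$. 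Comparing with the boundary prescription of $\calH_{Q,\tau,w_0}$ and invoking uniqueness of harmonic extensions, one sees that $2\Re\log\calB_{0,\tau,w_0}(z)$ differs from $2\Re\calH_{Q,\tau,w_0}(z)$ by an explicit real constant throughout $\Omega_{\tau,w_0}$. Exponentiating yields the advertised formula on $\Omega_{\tau,w_0}$; the extension to $\Omega_{\tau,w_0,m}$ follows because both $\calB_{0,\tau,w_0}$ and $\calH_{Q,\tau,w_0}$ are holomorphic on the larger neighborhood $\Omega^{\circledast}_{\tau,w_0}$ supplied by Theorem~\ref{thm:main1}.

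The only non-routine point is verifying that each of the two sides of the identity $|\calB_{0,\tau,w_0}|^2=\mathrm{const}\cdot\e^{2\Re\calH_{Q,\tau,w_0}}$ really extends past $\partial\Omega_{\tau,w_0}$ so that the estimate is valid on the whole set $\Omega_{\tau,w_0,m}$, which protrudes into the spectrum by a margin of order $m^{-1/2}(\log m)^{1/2}$. This is, however, guaranteed by the real-analytic smoothness of $\partial\Omega_{\tau,w_0}$ and the strict subharmonicity of $Q$ in a neighborhood of the free boundary, which together permit both $\calQ_{\tau,w_0}$, $\breve{Q}_{\tau,w_0}$ and $\calH_{Q,\tau,w_0}$ to be continued holomorphically (respectively harmonically) across $\partial\Omega_{\tau,w_0}$. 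With these ingredients in place, assembling the estimate and absorbing the multiplicative $(1+\Ordo(m^{-1}))$ factor from the expansion completes the argument.
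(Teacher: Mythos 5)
Your proposal is correct and follows essentially the same route as the paper's own (one-line) proof: apply Theorem~\ref{thm:main1} with accuracy $\kappa=1$, take moduli, use the decomposition \eqref{eq:Qbreve101} to produce the factor $\e^{-2m(Q-\breve{Q}_{\tau,w_0})}$, and identify $\lvert\calB_{0,\tau,w_0}\rvert^2$ with a constant multiple of $\e^{2\Re\calH_{Q,\tau,w_0}}$ by harmonicity of $\log\lvert\calB_{0,\tau,w_0}\rvert$ and the boundary prescriptions. One small caveat: if you actually evaluate your ``explicit real constant,'' the boundary data $\lvert\calB_{0,\tau,w_0}\rvert=\pi^{-\frac14}(\hDelta Q)^{\frac14}$ versus $\Re\calH_{Q,\tau,w_0}=\frac14\log(2\hDelta Q)$ yield $\lvert\calB_{0,\tau,w_0}\rvert^2=(2\pi)^{-\frac12}\e^{2\Re\calH_{Q,\tau,w_0}}$, so the prefactor comes out as $(2\pi)^{-\frac12}$ rather than the stated $\pi^{-\frac12}$ --- a $\sqrt{2}$ normalization discrepancy that sits in the corollary's statement itself, not in your argument.
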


\begin{proof}
In view of the decomposition \eqref{eq:Qbreve101}, this is just the
assertion of Theorem \ref{thm:main1} with accuracy $\kappa=1$.
\end{proof}

We proceed with a sketch of the error function asymptotics at interfaces,
in particular we point out why we may proceed exactly as is done in the 
proof of Theorem~1.4.1 of \cite{HW}.

\begin{proof}[Proof sketch of Corollary~\ref{cor:interface}]
We expand the partial Bergman kernel $K_{m,n,w_0}$ along the diagonal in terms 
of the root functions $\kernel_{m,n',w_0}$, for $n'\ge n$.
We keep $\tau=\frac{n}{m}$ throughout.
In view of Theorem \ref{thm:expansion}, we have
\begin{equation}
\label{eq:sum-density}
K_{m,n,w_0}(z_m(\xi),z_m(\xi))\,\e^{-2mQ(z_m(\xi))}=\sum_{n'=n}^{+\infty}
\lvert \kernel_{m,n',w_0}(z_m(\xi))\rvert^2\e^{-2mQ(z_m(\xi))},
\end{equation}
where $z_0\in\partial\Omega_{\tau,w_0}$ and where $z_m(\xi)$
gives the rescaled coordinate implicitly by
\[
z_m(\xi)=z_0+\nu\frac{\xi}{\sqrt{2m\hDelta Q(z_0)}}.
\]
The rescaled Bergman density is then obtained by
\[
\varrho_m(\xi)=\frac{1}{2m\hDelta Q(z_0)}
\sum_{n\ge \epsilon m}\lvert\kernel_{m,n,w_0}(z_m(\xi))\rvert^2\e^{-2mQ(z_m(\xi))}.
\]
In view of the assumed $(I_0,w_0)$-admissibility, we may apply the 
asymptotic expansion in the main result, specifically in the form of 
Corollary~\ref{cor:main1}.
Since Proposition~\ref{prop:free-bdry} tells us how the smooth Jordan curves 
$\partial\Omega_{\tau,w_0}$ propagate, a Taylor series expansion
of the function $Q-\breve{Q}_{\tau,w_0}$ allows us to write
the partial Bergman density approximately as a sum of translated Gaussians
\begin{equation}
\label{eq:sum-density1}
\varrho_m(\xi)=\frac{1}{\sqrt{2\pi}}
\sum_{j\ge 0}\frac{\gamma_0}{\sqrt{m}}
\e^{-\frac12(2\Re \xi + j\frac{\gamma_0}{\sqrt{m}})^2}
+\Ordo\big(m^{-\frac12}(\log m)^3\big),
\end{equation}
where $\gamma_0=\gamma_{z_0,w_0,Q}$ is a positive constant. 
As in the proof of Theorem~1.4.1 of \cite{HW}, we proceed to 
interpret the above sum \eqref{eq:sum-density1} as a Riemann sum for the 
integral formula for the error function:
\[
\mathrm{erf}(2\mathrm{Re}\, \xi)=\frac{1}{\sqrt{2\pi}}\int_{0}^{\infty}
\e^{-\frac12(2\mathrm{Re}\,(\xi)+t)^2}\diff t.
\]
This proof is complete.
\end{proof}

\section{The foliation flow for more general area forms}
\label{s:off-spectral-general}
\subsection{More general area forms}
It will be desirable to obtain some flexibility on the part 
of the weight $\e^{-2mQ}$
in the expansion of Theorem~\ref{thm:main1}. In particular, in the following 
subsections we will discuss various situations in which one needs 
asymptotics for root functions and orthogonal polynomials with respect to
measures
\[
\e^{-2mQ}V\,\diffA,
\]
where $V$ is a positive $C^2$-smooth function which is real-analytic 
in a neighborhood of the fixed smooth spectral interface
of interest, which meet the polynomial growth bound
\begin{equation}
C_1(1+|z|^2)^{-N} \le V(z)\le C_2(1+|z|^2)^{N},
\label{eq:polgrowthV}
\end{equation}
for some positive constants $C_1$ and $C_2$ and some fixed integer $N<+\infty$.
We also require that for some positive constant $C_3$, it holds that
\begin{equation}\label{eq:subharm-V}
\hDelta \log V(z)\le C_3\hDelta Q(z),\qquad z\in\C.
\end{equation}
In particular,
this covers working with the spherical area measure 
\[
\diffA_{\mathbb{S}}(z):=(1+|z|^2)^{-2}\diffA(z)
\] 
in place of planar area measure simply by considering $V(z)=(1+|z|^2)^{-2}$. 
Working with the spherical area measure has the 
advantage of invariance with respect to rotations and inversion.
For a more general conformal factor $V$, we factor 
$V\diffA=V_{\mathbb{S}}\diffA_{\mathbb{S}}$,
where $V_{\mathbb{S}}(z)=(1+|z|^2)^2V(z)$, and see that our weighted measure is
\[
\e^{-2mQ}V_{\mathbb{S}}\,\diffA_{\mathbb{S}},
\]
which has a more invariant appearance. If we write $\nu(z)=z^{-1}$, 
the spaces of polynomials of degree at most $n$ with respect to the $L^2$-space
with measure $\e^{-2mQ}V_{\mathbb{S}}\,\diffA_{\mathbb{S}}$ becomes isometrically 
isomorphic to the $L^2$-space of rational functions on the sphere $\mathbb{S}$
with a pole of order at most $n$ at the origin, with respect to the
$L^2$-space with measure $\e^{-2mQ\circ\nu}V_{\mathbb{S}}\circ\nu\,
\diffA_{\mathbb{S}}$. This provides an extension of the scale of root functions
to zeros of negative order (i.e. poles), and the apparent similarities between
orthogonal polynomials and root functions may be viewed in this light.  
This analogy goes even deeper than that. Assuming that $0$ is an off-spectral
point for the weighted $L^2$-space with measure 
$\e^{-2mQ\circ\nu}V_{\mathbb{S}}\circ\nu\,\diffA_{\mathbb{S}}$, we may multiply
by a suitable power of the conformal mapping from the off-spectral region 
to the unit disk $\D$, which preserves the origin, to obtain a space of 
functions holomorphic in a neighborhood of the off-spectral region. 
H\"ormander-type estimates for the $\bar\partial$-equation then permit us
to correct the functions so that they are entire, with small cost in norm.  

We note that more general area forms appear naturally from working with 
perturbations of the 
potential $Q$. Indeed, if we consider $\tilde Q=Q-m^{-1}h$ for
some smooth function $h$ of modest growth, we have that 
\[
\e^{-2m\tilde Q}\diffA=\e^{-2m Q}\e^{2h}\diffA,
\] 
which corresponds precisely to the conformal factor $V=\e^{2h}$.

\subsection{The asymptotics of root functions and 
orthogonal polynomials for more general area forms}
Our analysis will show that the root function asymptotics of 
Theorem~\ref{thm:main1} holds also in the context of a general area form, 
with only a slight change in the structure of the coefficients 
$\calB_{j,\tau,w_0}$.
Let $A^2_{mQ,V}$ denote the weighted Bergman space of entire functions with
respect to the Hilbert space norm
\[
\|f\|_{mQ,V}^2:=\int_\C|f|^2\e^{-2mQ}V\diffA<+\infty.
\]
The corresponding Bergman kernel is denoted by $K_{m,V}$. We also need the 
partial Bergman spaces $A^2_{mQ,V,n,w_0}$, consisting of the functions in
$A^2_{mQ,V}$ that vanish at $w_0$ to order $n$ or higher. These are closed 
subspaces of $A^2_{mQ,V}$ which get smaller as $n$ increases: 
$A^2_{mQ,V,n+1,w_0}\subset A^2_{mQ,V,n,w_0}$. The successive difference spaces
$A^2_{mQ,V,n,w_0}\ominus A^2_{mQ,V,n+1,w_0}$ have dimension at most $1$. If the 
dimension equals $1$, we single out an element $\kernel_{m,n,w_0,V}\in 
A^2_{mQ,V,n,w_0}\ominus A^2_{mQ,V,n+1,w_0}$ of norm $1$, which has positive derivative
of order $n$ at $w_0$. In the remaining case when the dimension equals $0$ we
put  $\kernel_{m,n,w_0,V}=0$. As before, we call $\kernel_{m,n,w_0,V}$ 
\emph{root functions}, and observe that these are the same objects we 
defined earlier for $V=1$ in terms of an extremal problem.

\begin{thm}\label{thm:twist-root}
Under the assumptions of Theorem \ref{thm:main1} and the above-mentioned
assumptions on $V$, with respect to the interface $\partial\Omega_{\tau,w_0}$,
we have, using the notation of the same theorem, for fixed accuracy and 
a given positive real $A$, the asymptotic expansion of the root function  
\begin{multline*}
\kernel_{m,n,w_0,V}(z)\\
=m^{\frac14}(\varphi_{\tau,w_0}'(z))^{\frac12}(\varphi_{\tau,w_0}(z))^n
\e^{m\calQ_{\tau,w_0}}
\Big\{\sum_{j=0}^{\kappa}m^{-j}\calB_{j,\tau,w_0,V}(z)
+\Ordo\big(m^{-\kappa-1}\big)\Big\},
\end{multline*}
on the domain $\Omega_{\tau,w_0,m}$ which depends on $A$, 
where $\tau=\frac{n}{m}$, and the implied constant is uniform.
Here, the main term $\calB_{0,\tau,w_0}$ is zero-free and smooth up to the 
boundary on $\Omega_{\tau,w_0}$, positive at $w_0$, with prescribed modulus
\[
[\calB_{0,\tau,w_0,V}(z)|=\pi^{-\frac14}(\hDelta Q(z))^{\frac14} V(z)^{-\frac12},\qquad
z\in\partial\Omega_{\tau,w_0}.
\]
\end{thm}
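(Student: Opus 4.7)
The plan is to follow the scheme of proof of Theorem~\ref{thm:main1} essentially verbatim, with the conformal factor $V$ entering only through a suitable modification of the orthogonal foliation flow and a mild adjustment of the boundary modulus of the leading coefficient. First I would apply the canonical positioning operator $\Vop_{m,n,w_0}$ of \eqref{eq:c-pos} to transfer the problem to a neighborhood of the unit disk. Because $\Vop_{m,n,w_0}$ acts isometrically between weighted $L^2$-spaces, the weight $\e^{-2mQ}V\diffA$ transforms into $\e^{-2mR_{\tau,w_0}}\tilde{V}\diffA$, where $\tilde{V}=V\circ\varphi_{\tau,w_0}^{-1}$. The hypothesis that $V$ is real-analytic in a neighborhood of the interface $\partial\Omega_{\tau,w_0}$ ensures that $\tilde{V}$ is real-analytic near $\T$ and admits a polarization $\tilde{V}(z,w)$ that is holomorphic in $(z,\bar{w})$ on a fattened diagonal annulus, so $\tilde{V}$ is compatible with the polarization machinery of Definition~\ref{def:class-W}.

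Next, I would prove a weighted analogue of Lemma~\ref{lem:main-flow} in which the flow equation \eqref{eq:flow-eq} is replaced by
\[
\lvert f_{s}\circ\psi_{s,t}(\zeta)\rvert^{2}\,\tilde{V}\circ\psi_{s,t}(\zeta)\,\e^{-2s^{-1}R\circ\psi_{s,t}(\zeta)}\Re\bigl(\bar{\zeta}\partial_{t}\psi_{s,t}(\zeta)\overline{\psi_{s,t}'(\zeta)}\bigr)=\e^{-s^{-1}t^{2}}\bigl\{(4\pi)^{-\tfrac12}+\Ordo(s^{\kappa+\tfrac12}+t^{2\kappa+1})\bigr\}.
\]
The iterative construction of the functions $B_{j,\tau,w_0,V}$ and of the reparametrizations $\hat\psi_{j,l}$ then proceeds exactly as in \cite{HW}, with the polarization $\tilde{V}(z,w)$ appearing as an additional analytic factor at every order. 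Matching moduli on $\T$ at leading order gives $\lvert B_{0}\rvert^{2}\tilde{V}=\pi^{-1/2}(\hDelta R)^{1/2}$, which on pulling back yields the advertised boundary modulus $\lvert \calB_{0,\tau,w_0,V}\rvert=\pi^{-1/4}(\hDelta Q)^{1/4}V^{-1/2}$ on $\partial\Omega_{\tau,w_0}$. The higher coefficients $\calB_{j,\tau,w_0,V}$ are obtained by the same algorithm as in Theorem~\ref{thm:comput-coeff}, with the terms involving $\tilde V$ contributing additional explicit pieces to each polarization identity.

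With the modified flow in hand, I would define the approximate root function $\kernel^{\langle\kappa\rangle}_{m,n,w_0,V}=m^{1/4}\Vop_{m,n,w_0}[\sum_{j=0}^{\kappa}m^{-j}B_{j,\tau,w_0,V}]$ and repeat the computation of \eqref{eq:comput-fol-coord}--\eqref{eq:flowcalc102}. The mean value property of holomorphic functions on $\T$ is unchanged, so the extra analytic factor $\tilde V$ is absorbed into the identity and the approximate orthogonality against $A^{2}_{mQ,V,n+1,w_0}$ still follows, with unit norm up to an error of $\Ordo(m^{-\kappa-1/3})$. The passage to a true entire function in $A^{2}_{mQ,V,n,w_0}$ is then carried out by the $\bar\partial$-surgery scheme of Subsection~\ref{ss:d-bar}: cut off by $\chi_{0}$, solve $\bar\partial u=\kernel^{\langle\kappa\rangle}_{m,n,w_0,V}\,\bar\partial\chi_{0}$ with vanishing to order $n$ at $w_0$, and project out the component in $A^{2}_{mQ,V,n+1,w_0}$.

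The main obstacle is the adaptation of the H\"ormander-type $\bar\partial$-estimate (Proposition~\ref{prop:horm}) to the measure $\e^{-2mQ}V\diffA$. This is precisely where the two hypotheses on $V$ are used: the polynomial growth bound \eqref{eq:polgrowthV} guarantees that an $L^{2}$-estimate with respect to $\e^{-2mQ}V$ is comparable, at infinity, with one against $\e^{-2mQ}$; and the inequality \eqref{eq:subharm-V} ensures that $m\hat{Q}_{\tau,w_0}-\tfrac12\log V$ remains strictly subharmonic with $\hDelta$ comparable to $m\hDelta Q$ on the bulk, which is what the $L^{2}$-estimate requires. Granted these, the $\bar\partial$-correction produces an exponentially small perturbation, and the rest of the argument, including the pointwise control on $\Omega_{\tau,w_0,m}$ via the off-spectral growth bound of Proposition~\ref{prop:growth}, goes through unchanged.
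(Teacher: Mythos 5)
Your proposal follows essentially the same route as the paper: transfer via canonical positioning to a neighborhood of the unit disk, run a $W$-modified orthogonal foliation flow (this is exactly the paper's Lemma~\ref{lem:main-flow-general} with $W=V\circ\varphi_{\tau,w_0}^{-1}$), read off $\lvert B_0\rvert^2W=\pi^{-1/2}(\hDelta R)^{1/2}$ on $\T$, and then perform the $\bar\partial$-surgery with a $V$-adapted H\"ormander estimate. The one place where your write-up is slightly imprecise is the weight used in the $\bar\partial$-estimate: the expression $m\hat{Q}_{\tau,w_0}-\tfrac12\log V$ is equal to $mQ_m$ only on the coincidence set and need not be subharmonic off it (there $\hat{Q}_{\tau,w_0}$ is harmonic, while $\hDelta\log V$ may be positive); the correct object, as in the paper, is the obstacle solution $\hat{Q}_{\tau,m}$ for the perturbed potential $Q_m=Q-\tfrac{1}{2m}\log V$ (together with the forced logarithmic singularity $\tau\log\lvert z-w_0\rvert$ at $w_0$), which is subharmonic by construction, agrees with the old obstacle function up to $\Ordo(m^{-1})$, and plugs directly into H\"ormander's estimate to give the exponentially small correction.
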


The proof of this theorem is analogous to that of Theorem \ref{thm:main1},
given that we have explained how to modify the orthogonal foliation flow with
respect to the general area form in Lemma \ref{lem:main-flow-general}. 
The lemma is applied with $s=m^{-1}$. We omit the necessary details.

We turn next to the computation of the coefficients $\calB_{j,\tau,w_0,W}$ 
in the above
expansion. 
We recall that $R_{\tau,w_0}$ is the potential 
induced by $Q$ in the canonical positioning procedure, and we put analogously 
\[
W_{\tau,w_0}(z)=V\circ\varphi_{\tau,w_0}^{-1}(z),\qquad z\in\D(0,\hdelta).
\]
For the formulation, we need the orthogonal projection $\Pop_{H^2_{0}}$ of 
$L^2(\T)$ onto the Hardy space $H^2_0$ of functions $f$ in the Hardy space 
$H^2$ that vanish at the origin.

\begin{thm}\label{thm:comput-coeff}
In the asymptotic expansion of root functions in 
Theorem~\ref{thm:twist-root}, the coefficient functions $\calB_{j,\tau,w_0,V}$ 
are obtained by 
\[
\calB_{j,\tau,w_0,V}=(\varphi_{\tau,w_0}')^{\frac12}B_{j,\tau,w_0,V}\circ
\varphi_{\tau,w_0},\qquad j=1,2,3,\ldots.
\]
If $H_{\tau,w_0,V}$ denotes the unique bounded holomorphic function on $\D$, 
whose real part meets
\[
\Re H_{\tau,w_0,V}=\frac{1}{4}\log(4\hDelta R_{\tau,w_0}) + 
\frac12\log(W_{\tau,w_0}), \qquad \text{on } \T,
\]
with $\Im H_{\tau,w_0,V}(0)=0$, the functions $B_{j,\tau,w_0,V}$ 
may be obtained algorithmically as
$$
B_{j,\tau,w_0,V}=c_{j} \e^{H_{\tau,w_0,V}} - \e^{H_{\tau,w_0,V}}
\Pop_{H^2_{0}}\big[\e^{\bar{H}_{\tau,w_0,V}}F_{j}\big]
$$
for some real constants $c_{j}=c_{j,\tau,w_0,V}$ and real-analytically smooth 
functions $F_j=F_{j,\tau,w_0,V}$ on the unit circle $\T$. Here, both the 
constants $c_j$ and the functions $F_j$ may be computed iteratively in 
terms of $B_{0,\tau,w_0,V},\ldots B_{j-1,\tau,w_0,V}$.
\end{thm}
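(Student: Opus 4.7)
The plan is to derive the algorithmic description of the $B_{j,\tau,w_0,V}$ by matching coefficients of powers of $s=m^{-1}$ in the generalized foliation-flow equation (the $W_{\tau,w_0}$-weighted analogue of \eqref{eq:flow-eq} from Lemma~\ref{lem:main-flow-general}), exactly paralleling the scheme carried out for the orthogonal polynomial case in \cite{HW}. Working in canonical position, we substitute the ansatz $f_s=\sum_{j}s^j B_{j,\tau,w_0,V}$ together with the analogous expansion of $\psi_{s,t}$ into the flow equation, take logarithms, and expand in powers of $s$ at $t=0$ on $\T$. The factor $(\varphi'_{\tau,w_0})^{1/2}$ in $\calB_{j,\tau,w_0,V}=(\varphi'_{\tau,w_0})^{1/2}B_{j,\tau,w_0,V}\circ\varphi_{\tau,w_0}$ then emerges from the change-of-variables Jacobian in the canonical positioning operator $\Vop_{m,n,w_0}$.

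At leading order $s^0$, after recognizing that $R_{\tau,w_0}$ is quadratically flat on $\T$ and that the geometric factor $\Re(\bar\zeta\partial_t\psi_{0,0}(\zeta)\overline{\psi_{0,0}'(\zeta)})$ is determined by the Laplacian $\hDelta R_{\tau,w_0}$ on $\T$, the flow equation yields the boundary relation
\[
|B_{0,\tau,w_0,V}(\zeta)|^{2}W_{\tau,w_0}(\zeta)=\pi^{-\frac12}(\hDelta R_{\tau,w_0}(\zeta))^{\frac12},\qquad \zeta\in\T.
\]
Combined with the requirements that $B_{0,\tau,w_0,V}$ be holomorphic in a neighborhood of $\bar\D$, zero-free there, and real positive at the origin, this determines $B_{0,\tau,w_0,V}=\e^{H_{\tau,w_0,V}}$ uniquely, with $H_{\tau,w_0,V}$ as in the statement.

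For $j\ge1$, the $s^j$ coefficient of the logarithm of the flow equation, evaluated on $\T$ at $t=0$, is a linear equation in $B_{j,\tau,w_0,V}$ of the form
\[
2\Re\!\big(B_{j,\tau,w_0,V}\,\bar B_{0,\tau,w_0,V}\big)=-|B_{0,\tau,w_0,V}|^{2}F_{j,\tau,w_0,V}\quad\text{on }\T,
\]
where $F_{j,\tau,w_0,V}$ is a real-analytic function on $\T$ depending polynomially on $B_{0,\tau,w_0,V},\ldots,B_{j-1,\tau,w_0,V}$ and on the already-determined flow coefficients $\hat\psi_{k,l}$ with $k<j$. Exploiting that $\bar B_{0,\tau,w_0,V}=\e^{\bar H_{\tau,w_0,V}}$ on $\T$, this becomes a standard Schwarz problem: find $B_{j,\tau,w_0,V}$ holomorphic in $\D$ with prescribed real part of $B_{j,\tau,w_0,V}\e^{\bar H_{\tau,w_0,V}}$ on $\T$. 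Solving by Herglotz integration and re-packaging through the Hardy projection $\Pop_{H^2_{0}}$, using $\e^{H_{\tau,w_0,V}}\e^{\bar H_{\tau,w_0,V}}=|B_{0,\tau,w_0,V}|^{2}$ on $\T$, gives precisely
\[
B_{j,\tau,w_0,V}=c_{j}\,\e^{H_{\tau,w_0,V}}-\e^{H_{\tau,w_0,V}}\Pop_{H^2_{0}}\!\big[\e^{\bar H_{\tau,w_0,V}}F_{j,\tau,w_0,V}\big],
\]
in which the arbitrary constant left by the Schwarz problem must, by the reality of $B_{j,\tau,w_0,V}$ at the origin, be the real constant $c_{j}$.

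The constant $c_{j}$ is pinned down iteratively at the next step: together with the flow coefficients $\hat\psi_{j,l}$ it is determined by the coefficient of $s^{j}$ in the expansion of the flow equation at higher orders in $t$ (equivalently, by the normalization condition on $B_{j,\tau,w_0,V}$ that emerges from the $L^{2}$-unit-norm requirement on the root function $\kernel_{m,n,w_0,V}$, in the sense of \eqref{eq:norm-h-flow}). The main technical point is bookkeeping: verifying that the hierarchy of equations is triangular, that at each step the Schwarz problem is solvable inside the uniform family $\mathfrak{W}(\hdelta,\sigma)$ with real-analytic boundary data, and that both $F_{j,\tau,w_0,V}$ and $c_{j,\tau,w_0,V}$ depend only on lower-order data. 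All of this is essentially the argument of \cite{HW}; the only modification is that the presence of $W_{\tau,w_0}$ contributes an additional summand $\frac12\log W_{\tau,w_0}$ to the real part of $H_{\tau,w_0,V}$ on $\T$ and, by linearity, propagates as known terms through each $F_{j,\tau,w_0,V}$ without altering the solvability structure.
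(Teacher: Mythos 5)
Your overall strategy—iteratively matching powers of $s$ in the logarithm of the generalized flow equation, reducing each step to a Schwarz problem solved by the Herglotz/Hardy projection, and tracking the triangular dependence on lower-order data—is indeed the same approach the paper takes (via Steps~1--4 in the proof of Lemma~\ref{lem:main-flow-general} together with the reference to Theorem~1.3.7 of \cite{HW}). However, there is a concrete error in the identification of $B_{0,\tau,w_0,V}$ that undermines the passage from the Schwarz problem to the stated formula.

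You correctly obtain from the leading order of the flow equation that $\lvert B_{0,\tau,w_0,V}\rvert^2\,W_{\tau,w_0}=\pi^{-\frac12}(\hDelta R_{\tau,w_0})^{\frac12}$ on $\T$, i.e.\ $\lvert B_{0,\tau,w_0,V}\rvert=\pi^{-\frac14}(\hDelta R_{\tau,w_0})^{\frac14}W_{\tau,w_0}^{-\frac12}$, which matches the boundary modulus in Theorem~\ref{thm:twist-root}. But then you assert $B_{0,\tau,w_0,V}=\e^{H_{\tau,w_0,V}}$ ``with $H_{\tau,w_0,V}$ as in the statement.'' Since the statement defines $\Re H_{\tau,w_0,V}=\tfrac14\log(4\hDelta R_{\tau,w_0})+\tfrac12\log W_{\tau,w_0}$ on $\T$, one has $\lvert\e^{H_{\tau,w_0,V}}\rvert=(4\hDelta R_{\tau,w_0})^{\frac14}W_{\tau,w_0}^{\frac12}$ there, so
\[
\frac{\lvert B_{0,\tau,w_0,V}\rvert}{\lvert\e^{H_{\tau,w_0,V}}\rvert}
=(4\pi)^{-\frac14}\,W_{\tau,w_0}^{-1}\quad\text{on }\T,
\]
which is non-constant whenever $W_{\tau,w_0}$ is. Hence $B_{0,\tau,w_0,V}$ is not a constant multiple of $\e^{H_{\tau,w_0,V}}$, and in particular $\bar B_{0,\tau,w_0,V}\neq\e^{\bar H_{\tau,w_0,V}}$ on $\T$. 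This is exactly the substitution you rely on in the step ``Exploiting that $\bar B_{0,\tau,w_0,V}=\e^{\bar H_{\tau,w_0,V}}$ on $\T$,'' so the reduction to a Schwarz problem weighted by $\e^{\bar H_{\tau,w_0,V}}$ does not follow as claimed. What your computation actually yields after dividing $2\Re(B_{j}\bar B_0)=-\lvert B_0\rvert^{2}F_j$ by $\lvert B_0\rvert^2$ is a Schwarz problem for $B_j/B_0$ with real datum $-\tfrac12 F_j$, giving $B_j=-\tfrac12\hat F_j(0)\,B_0-B_0\,\Pop_{H^2_0}[F_j]$. To rewrite this in the form $c_j\,\e^{H}-\e^{H}\Pop_{H^2_0}[\e^{\bar H}F_j]$ one needs a genuine change of data: the nonconstant holomorphic factor $B_0/\e^{H}$ must be absorbed, and the $F_j$ appearing inside the projection is not the raw coefficient from the log-flow expansion but a quantity built from $B_{j-k}W_{\tau,w_0}$ and the operators $\Mop_k$ from \cite{HW}, as indicated after the theorem statement. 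Your proof does not carry out this identification, so the stated formula is not actually derived; this is the gap you would need to close.
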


One may further derive concrete expressions for the constants $c_j$
and the real-analytic functions $F_j$ in the above result, 
in terms of the rather complicated explicit differential operators
$\Lop_k$ and $\Mop_k$ as defined in equation (1.3.4) and Lemma 3.2.1 in 
\cite{HW}. We should mention that the definition of the operator $\Mop_k$ 
contains a parameter $l$, which is allowed to assume only non-negative 
values. However, the same definition works also for $l< 0$, which is 
necessary for the present application. In terms of the operators
$\Mop_k$ and $\Lop_k$, we have
\[
F_j(\theta)=\sum_{k=1}^{j}\Mop_{k}[B_{j-k,\tau,w_0,V} W_{\tau,w_0}]
\]
and 
\[
c_j=-\frac{(4\pi)^{-\frac14}}{2}\!\!\!\sum_{(i,k,l)\in\indset_{j}} 
\int_{\T}\frac{\Lop_{k}
[r B_{i,\tau,w_0,V}(r\e^{\imag\theta}) \bar{B}_{l,\tau,w_0,V}(r\e^{\imag\theta}) 
W_{\tau,w_0}(r\e^{\imag\theta})]}{(4\hDelta R_{\tau,w_0}
(r\e^{\imag\theta}))^{\frac12}}
\big\vert_{r=1}\diffs(\e^{\imag\theta}).
\]
Here, the index set $\indset_j$ is defined as 
\[
\indset_j=\{(i,k,l)\in\mathbb{Z}_{+,0}^3:i,l<j,\,\,\,i+k+l=j\},
\] 
where we use the notation $\mathbb{Z}_{+,0}:=\{0,1,2,\ldots\}$.
This theorem is obtained in the same fashion as Theorem~1.3.7 in \cite{HW}
in the context of orthogonal polynomials, and we do not write down
a proof here. 

\subsection{The flow modified by a conformal factor}

We proceed first to modify the book-keeping slightly by formulating an analogue
of Definition~\ref{def:class-W}, which applies to weights after canonical 
positioning.

\begin{defn}\label{def:class-W-mod}
Let $\hdelta$ and $\sigma$ be given positive numbers, with $\hdelta>1$.
A pair $(R,W)$ of non-negative $C^2$-smooth weights defined on $\D(0,\hdelta)$ 
is said to belong to the class 
$\mathfrak{W}_\circledast(\hdelta,\sigma)$ if $R\in \mathfrak{W}(\hdelta,\sigma)$
and if the weight $W$ meets the following conditions:
\begin{enumerate}[(i)]
\item $W$ is real-analytic and zero-free in the neighborhood 
$\mathbb{A}(\hdelta^{-1},\hdelta)$ of the unit 
circle $\T$, 
\item The polarization $W(z,{w})$ of $W$ extends to a bounded
holomorphic function of $(z,\bar w)$ on the $2\sigma$-fattened 
diagonal annulus $\hat{\mathbb{A}}(\sigma, \hdelta)$, which is also 
bounded away from $0$.
\end{enumerate}
A collection $S$ of pairs $(R,W)$ is said to be a uniform family
in $\mathfrak{W}_\circledast(\hdelta,\sigma)$ if the weights $R$ with 
$(R,W)\in S$ are confined to a uniform family in 
$\mathfrak{W}(\hdelta,\sigma)$, while $W(z,{w})$
is uniformly bounded and bounded away from $0$ in 
$\hat{\mathbb{A}}(\hdelta,\sigma)$.
\end{defn}

Fix a pair $(\hdelta_0,\sigma_0)$. Just as before, we let $(\hdelta_1,\sigma_1)$
denote a possibly more restrictive pair of positive reals with $\hdelta_1>1$
such that the relevant polarizations are hermitian-holomorphic and uniformly
bounded on $\hat{\mathbb{A}}(\hdelta_1,\sigma_1)$.
In connection with this definition, we recall the bound
\[
\hdelta_1\le\sqrt{1+\sigma_1^2}+\sigma_1,
\] 
which guarantees that that if $f(z,{w})$ is holomorphic in 
$(z,\bar w)$ on the set $\hat{\mathbb{A}}(\sigma_1, \hdelta_1)$, then the function 
$f_{\T}(z)=f(z,\bar{z}^{-1})$ may be continued holomorphically to the annulus
$\mathbb{A}(\hdelta_1^{-1},\hdelta_1)$. 

We proceed with the main result of this section.

\begin{lem}\label{lem:main-flow-general}
Fix an accuracy parameter $\kappa$ and let 
$(R,W)\in \mathfrak{W}_\circledast(\hdelta_0,\sigma_0)$.
Then there exist a radius $\hdelta_2$ with $\hdelta_1>\hdelta_2>1$, 
bounded holomorphic functions $f_{s}$ on $\D(0,\hdelta_1)$ of the form
\[
f_{s}=\sum_{j=0}^{\kappa}s^{j}
B_{j}+\Ordo(s^{\kappa+1}),\qquad z\in\D(0,\hdelta'),
\]
and normalized conformal mappings $\psi_{s,t}$ on $\D(0,\hdelta_2)$ given by 
\[
\psi_{s,t}=\psi_{0,t}+\sum_{\substack{(j,l)\in\indsett_{2\kappa+1}\\ j\ge 1}}s^j t^l 
\hat{\psi}_{j,l}
\] 
such that for $s,t$ small enough it holds that the domains 
$\psi_{s,t}\big(\D\big)$ increase with
$t$, while they remain contained in $\D(0,\hdelta_1)$. 
Moreover, for $\zeta\in\T$, we have
\begin{multline}\label{eq:flow-eq-W}
\lvert f_{s}\circ \psi_{s,t}(\zeta)\rvert^2\,\e^{-2s^{-1}R\circ\psi_{s,t}}
\Re\big(\bar{\zeta}\partial_t\psi_{s,t}(\zeta)
\overline{\psi_{s,t}'(\zeta)}\big)
W\circ\psi_{s,t}(\zeta)
\\=\e^{-s^{-1}t^2}\Big\{(4\pi)^{-\frac12}
+\Ordo\big(\lvert s\rvert^{\kappa+\frac12}
+\lvert t\rvert^{2\kappa+1}\big)\Big\}.
\end{multline}
For small positive $s$, when $t$ varies in the interval $[-\beta_s,\beta_s]$ 
with $\beta_s:=s^{1/2}\log \frac{1}{s}$, the flow of loops 
$\{\psi_{s,t}(\T)\}_t$ cover a neighborhood of the circle $\T$ of width 
proportional to $\beta_s$ smoothly.
In addition, the main term $B_{0}$ is zero-free, positive at the origin, 
and has modulus $|B_0|=\pi^{-\frac14}(\hDelta R)^{\frac14}W^{-\frac12}$ on $\T$, 
and the other terms $B_{j}$ are all real-valued at the origin. 
The implied constant in \eqref{eq:flow-eq-W} is uniformly bounded, provided 
that 
$(R,W)$ is confined to a uniform family of 
$\mathfrak{W}_\circledast(\hdelta_0,\sigma_0)$.
\end{lem}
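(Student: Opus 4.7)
The plan is to follow the scheme of the proof of Lemma~4.1.2 in \cite{HW}, carrying the weight $W$ through the computation as an additional multiplicative factor. I would take the logarithm of the flow equation \eqref{eq:flow-eq-W} so that the four factors---the modulus squared of $f_s\circ\psi_{s,t}$, the Gaussian profile $\e^{-2s^{-1}R\circ\psi_{s,t}}$, the weighted normal velocity, and $W\circ\psi_{s,t}$---all enter additively. Substituting the ansatz $f_s = \sum_{j\le\kappa} s^j B_j$ and $\psi_{s,t} = \psi_{0,t} + \sum_{j\ge 1,\,l} s^j t^l \hat\psi_{j,l}$, and expanding the resulting identity in a joint Taylor series in $s$ and $t$, I would match coefficients bidegree by bidegree to recover the unknowns iteratively.

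At the leading order, the flow $\psi_{0,t}$ is constructed exactly as in \cite{HW} so that $R\circ\psi_{0,t}(\zeta) = \tfrac12 t^2 + \Ordo(t^3)$ for $\zeta\in\T$, which matches the Gaussian $\e^{-s^{-1}t^2}$ on the right-hand side of \eqref{eq:flow-eq-W}. Using the quadratic flatness $R = (1-|z|^2)^2 R_0$ and the relation $R_0|_\T = \tfrac12 \hDelta R|_\T$, this pins down $\Re(\bar\zeta\,\partial_t\psi_{0,0}(\zeta))$ as an explicit function of $\hDelta R$ on $\T$. The leading-order equation at $s = t = 0$ then reads
\[
|B_0(\zeta)|^2 \,\Re\bigl(\bar\zeta\,\partial_t\psi_{0,0}(\zeta)\,\overline{\psi_{0,0}'(\zeta)}\bigr)\,W(\zeta) = (4\pi)^{-\frac12}, \qquad \zeta\in\T,
\]
from which one reads off $|B_0|=\pi^{-\frac14}(\hDelta R)^{\frac14}W^{-\frac12}$ on $\T$. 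The function $B_0$ is then recovered as the unique zero-free outer function with this prescribed boundary modulus and $B_0(0)>0$.

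For the inductive step, at each bidegree $(j,l)\in\indsett_{2\kappa+1}$ the expanded flow equation yields a linear equation involving the next unknown in the list, with all previously produced quantities already determined. For $l \ge 1$, the Fourier coefficients of this equation on $\T$ recover the Taylor coefficients of the conformal perturbation $\hat\psi_{j,l}$. For $l = 0$, the equation prescribes $\log|B_j|$ on $\T$ modulo an additive real constant, which is fixed by the normalization $B_j(0)\in\R$; the holomorphic $B_j$ is then recovered by the standard outer function construction. This is precisely the algorithm formalized in Theorem~\ref{thm:comput-coeff}; the presence of $W$ affects the explicit form of the constants $c_j$ and of the functions $F_j$, but not the structure of the scheme.

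The main obstacle will be ensuring that every function produced in the iteration extends holomorphically to a fixed neighborhood of $\T$, with bounds uniform over pairs $(R,W)$ in a uniform family of $\mathfrak{W}_\circledast(\hdelta_0,\sigma_0)$. This is where the hypothesis is used: the polarizations of $\log W$, $\log\hDelta R$, $\hat R$, and $\log(-z\partial_z\hat R)$ all extend as bounded hermitian-holomorphic functions on the fattened diagonal annulus $\hat{\mathbb{A}}(\hdelta_1,\sigma_1)$, so each outer function constructed in the iteration extends holomorphically to $\mathbb{A}(\hdelta_1^{-1},\hdelta_1)$ with uniform bounds; similarly each $\hat\psi_{j,l}$ extends as a holomorphic function on a fixed disk $\D(0,\hdelta_2)$. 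The smooth covering of a $\beta_s$-neighborhood of $\T$ by the loops $\{\psi_{s,t}(\T)\}_t$ follows from positivity of the leading normal velocity together with the implicit function theorem, and the uniform remainder estimate in \eqref{eq:flow-eq-W} follows from Taylor's theorem applied to the joint expansion. For the detailed bookkeeping, which is insensitive to the presence of the extra factor $W$, I would refer to the corresponding arguments in \cite{HW}.
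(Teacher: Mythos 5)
Your proposal follows the same route as the paper: take logarithms of the flow equation, substitute the ansatz $f_s=\sum_j s^j B_j$ (or better, $h_s=\log f_s=\sum_j s^j b_j$) and $\psi_{s,t}=\psi_{0,t}+\sum_{j\ge1}s^jt^l\hat\psi_{j,l}$, expand in a joint Taylor series in $(s,t)$, solve bidegree by bidegree using the Herglotz/outer-function construction, and keep uniform control through the polarizations guaranteed by membership in $\mathfrak{W}_\circledast(\hdelta_0,\sigma_0)$. One small imprecision: the leading flow $\psi_{0,t}$ must satisfy $R\circ\psi_{0,t}(\zeta)=\tfrac12t^2$ \emph{exactly} on $\T$ (the loop is a genuine level curve of $R$), not merely up to $\Ordo(t^3)$; the exactness is what keeps the quantity $\tfrac{2}{s}\bigl(R\circ\psi_{s,t}-\tfrac12t^2\bigr)$ smooth and $\Ordo(1)$ in $s$, which the bidegree bookkeeping relies on. Since you invoke the construction "exactly as in \cite{HW}," this is evidently a slip of the pen rather than a gap in the argument.
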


In order to obtain this lemma, we need to modify the algorithm which gives 
the original result. We proceed to sketch an outline of this modification. 
The omitted details are available in \cite{HW}, and we try to guide the reader 
for easy reading.

We recall the following index sets from \cite{HW}.
For an integer $n$, we introduce
\begin{equation}
\label{eq:index-set}
\indsett_n=\big\{(j,l)\in\mathbb{Z}_{+,0}^2\,:\;2j+l\le n\big\}.
\end{equation}
We observe that if $(j,l)\in\indsett_n$, then $2j\le n$, and that we have
the equivalence 
\begin{equation}
\label{eq:index-set2.0}
(j,l)\in\indsett_{n+1}\,\,\,\,\text{and}\,\,\,\, j\ge1
\,\,\,\Longleftrightarrow\,\,\,
(j-1,l+1)\in\indsett_{n}\,\,\,\,\text{and}\,\,\,l\ge0.
\end{equation} 
We endow the set $\indsett_n$ with the ordering $\prec$ induced by 
the lexicographic ordering, so we agree that $(j,l)\prec(a,b)$ if $j<a$ 
or if $j=a$ and $l<b$.

\begin{proof}[Proof of Lemma~\ref{lem:main-flow-general}]
The conformal mappings $\psi_{s,t}$ are assumed to have the form
\[
\psi_{s,t}=\psi_{0,t}+
\sum_{\substack{(j,l)\in\indsett_{2\kappa+1}\\ j\ge 1}}
s^{j}t^l\hat{\psi}_{j,l}
\]
for some bounded holomorphic coefficients $\hat{\psi}_{j,l}$ and a conformal
mapping
\[
\psi_{0,t}=\sum_{l=0}^{+\infty}t^l \hat{\psi}_{0,l}.
\] 
We make the following initial observation.
In the limit case $s=0$, the flow equation \eqref{eq:flow-eq-W} 
of the lemma forces $\psi_{0,t}$ to be a 
mapping from $\D$ onto the interior of suitably chosen level curves of 
$R$, and from this we may obtain the coefficients $\hat{\psi}_{0,l}$. 
Indeed, if we take logarithms of both sides of the equation
and multiply by $s$ we obtain
\begin{equation}
\label{eq:logflow}
s\log \lvert f_s\circ\psi_{s,t}\rvert^2 -2R\circ\psi_{s,t} 
+ s\log (1-t)\log J_\Psi 
+ s\log  W\circ\psi_{s,t}=-t^2+\Ordo(s),
\end{equation}
where the $J_\Psi$ denotes the associated Jacobian
\[
J_{\Psi_s}((1+t)\zeta):=\Re\big(\bar{\zeta}\partial_t\psi_{s,t}
\overline{\psi_{s,t}'(\zeta)}\big),\qquad \zeta\in\T,
\]
for $t$ in the interval $[-\beta_s,\beta_s]$, so that $J_{\Psi_s}$ gets defined
on the annulus $\mathbb{A}(1-\beta_s,1+\beta_s)$, provided that the coefficient 
functions which define $\psi_{s,t}$ can be found.  
Assuming some reasonable stability with respect to the variable $s$ as 
$s\to0^+$ in \eqref{eq:logflow}, we obtain in the limit that
\begin{equation}
\label{eq:logflow2}
2R\circ\psi_{0,t}(\zeta)=t^2,\qquad \zeta\in\T.
\end{equation}
In particular, the loop $\psi_{0,t}(\T)$ is a part of the level set 
where $R=\frac{1}{2}t^2$. This level set consists of two disjoint simple 
closed curves, one on either side of $\T$, at least for small enough $t$
and locally near $\T$. 
For $t>0$, we choose the curve outside the unit circle, while for $t<0$
we choose the other one. We normalize the mapping $\psi_{0,t}$ so that it
preserves the origin and has positive derivative there.
In this fashion, the coefficients $\hat{\psi}_{0,l}$ get determined uniquely 
by the level set condition. 
We note that the smoothness of the level curves was worked out in some 
detail in Proposition~4.2.5 in \cite{HW}.
Moreover, the coefficient functions $\hat{\psi}_{0,l}$ are given in terms 
of Herglotz integrals as in Proposition~4.6.1 of \cite{HW}, with the 
obvious modifications.

Our next task is to obtain iteratively the coefficients 
$B_{j}$ for $j=0,1,2,\ldots$ and the higher order corrections to the
conformal mapping, given in terms of the coefficients 
$\hat{\psi}_{j,l}$ for $j=1,\ldots,\kappa$. 
It turns out to be advantegeous to work instead with $h_s=\log f_s$ as 
the basic object of study, and make the ansatz
\begin{equation}\label{eq:def-h}
h_s(z)=\sum_{j=0}^{\kappa} s^j b_j(z),\qquad z\in\D(0,\hdelta_1),
\end{equation}
where $b_j$ are bounded holomorphic functions on $\D(0,\hdelta_1)$
with $\Im b_j(0)=0$. It follows that the coefficient function $B_j$ may
be expressed as a multivariate polynomial in the coefficients 
$b_0,\ldots,b_j$, for each $j=0,\ldots,\kappa$.  
The coefficient functions $b_j$ are obtained by differentiating the 
logarithm of the flow equation \eqref{eq:flow-eq-W}. 
To set things up correctly, we write
\[
\omega_{s,t}(\zeta)=\lvert f_s\circ\psi_{s,t}\rvert^2 
\e^{-2s^{-1}(R\circ\psi_{s,t}-\frac{1}{2}t^2)}
(W\circ\psi_{s,t})\Re\big(\bar{\zeta}\partial_t\psi_{s,t}
\overline{\psi_{s,t}'}
\big)
\]
for $\zeta\in\T$, where we recall that $R\circ\psi_{0,t}=\frac{1}{2}t^2$,
and put
\begin{multline}\label{eq:def-varpi}
\varpi_{s,t}=\log\omega_{s,t}
=2\Re h_s\circ\psi_{s,t}-
\frac{2}{s}\big(R\circ\psi_{s,t}-\tfrac12 t^2\big)
\\
+\log\Re(\bar\zeta\partial_t\psi_{s,t}\overline{\psi_{s,t}'})+
\log W\circ\psi_{s,t}.
\end{multline}
We need to show that 
\begin{equation}\label{eq:flow-omega}
\varpi_{s,t}(\zeta)=
-\frac12\log(4\pi)+\Ordo(\lvert s\rvert^{\kappa+\frac12}
+\lvert t\rvert^{2\kappa+1}),
\qquad \zeta\in\T
\end{equation}
when $s\to0$ while $|t|\le s^{\frac12}\log \frac{1}{s}$.
Since $\varpi_{s,t}$ should be smooth in the parameters $s$ and $t$, 
by the multivariate Taylor formula, this is equivalent to having the system of 
equations
\begin{multline}
\label{eq:main-flow}
\hat{\varpi}_{j,l}(\zeta)
=\frac{\partial_{s}^j\partial_t^l\varpi_{s,t}(\zeta)}{j!l!}
\Big\vert_{s=t=0}
\\
=
\begin{cases}
-\frac12\log(4\pi)&\text{ for }\zeta\in\T \text{ and } (j,l)=(0,0),\\
0& \text{ for }\zeta\in\T \text{ and } 
(j,l)\in\indsett_{2\kappa}\setminus(0,0),
\end{cases}
\end{multline}
fulfilled. 
If, in addition, we can show that the functions $b_j$ and $\hat{\psi}_{j,l}$
remain holomorphic and uniformly bounded in the appropriate domains 
provided that $(R,W)$ remains confined to a uniform family of 
$\mathfrak{W}_\circledast(\hdelta_0,\sigma_0)$, the result follows.

\medskip

\noindent{\sc How to solve for the unknown coefficient functions.} 
We proceed to solve the system \eqref{eq:main-flow}. The approach is to 
first express the Taylor coefficients $\hat{\varpi}_{j,l}$ in terms of our 
unknown coefficient functions $b_0,\ldots,b_\kappa$ and $\hat{\psi}_{j,l}$ for 
$(j,l)\in\indsett_{2\kappa+1}$, and then, as a second step, to insert the 
system of equations \eqref{eq:main-flow}. 
We determine the unknowns by an iterative procedure. At a given step in
the iteration, some of the coefficient functions will be already found.
We split the equation \eqref{eq:main-flow} into a term containing 
precicely one unknown coefficient function and a second term which contains 
only already determined coefficient functions. Here, we refrain from giving 
a complete account, which is available in \cite{HW} modulo minor modifications
to fit the present setup. 
The higher order Taylor coefficients of the function $\varpi_{s,t}$ with 
respect of $s$ and $t$ are given as follows. When $j\ge 0$ and $l\ge 1$, we 
have
\begin{equation}
\label{eq:hatphi-det}
0=\hat{\varpi}_{j,l}(\zeta)
=-2(4\hDelta R(\zeta))^{\frac12}\,\Re(\bar\zeta\hat{\psi}_{j+1,l-1}(\zeta))
+\frakF_{j,l,W}(\zeta),
\qquad \zeta\in\T,
\end{equation}
while for $j\ge 1$ and $l=0$, we have
\begin{equation}\label{eq:Bj-det}
0=\hat{\varpi}_{j,0}(\zeta)
=2\Re b_j(\zeta)
+\frakF_{j,0,W}(\zeta),\qquad \zeta\in\T.
\end{equation}
Here, we inserted the equation \eqref{eq:main-flow}
for added convenience. The expressions $\frakF_{j,l,W}$ are real-valued 
real-analytic functions which are uniformly bounded while $(R,W)$ 
remains in a uniform family 
in $\mathfrak{W}_\circledast(\hdelta_0,\sigma_0)$, and may be explicitly 
written down using the multivariate Fa{\`a} di Bruno's formula. 
The crucial point for us is the dependence structure of the functions 
$\frakF_{j,l,W}$, which remains the same as in the algorithm for the 
orthogonal polynomials: 

\medskip

\noindent {\em {\rm($\frakF$-i)} 
For $l\ge 1$, the function $\frakF_{j,l,W}$ is an expression 
in terms of the functions $b_0,\ldots,b_j$ as well $\hat{\psi}_{p,q}$ for 
indices $(p,q)\in\indsett_{2\kappa+1}$ with $(p,q)\prec (j+1,l-1)$, and also 
involves $R$ and $W$,}

\medskip

\noindent whereas 

\medskip

\noindent {\em {\rm($\frakF$-ii)} For $l=0$, the function $\frakF_{j,0,W}$ 
is an expression in terms of $b_0,\ldots,b_{j-1}$ and 
$\hat{\psi}_{p,q}$ for indices $(p,q)\in\indsett_{2\kappa+1}$ with 
$(p,q)\prec(j+1,0)$, and also involves $R$ and $W$.}

\medskip

The dependence is basically multivariate polynomial dependence, with the
need to allow also for partial derivatives of some of the given expressions.
This aspect is described in great detail in terms of the polynomial 
complexity classes introduced in Section~4.8 of \cite{HW}.
That ($\frakF$-i)-($\frakF$-ii) hold follows by noticing that Propositions 
4.2.5, 4.4.1 and 4.6.1 in \cite{HW} remain basically unchanged, 
while Propositions~4.7.1, 4.9.1, 4.10.1 and 4.12.1 in \cite{HW} require
 the obvious modifications related to our replacing the weight 
$\e^{-2mR}$ on the exterior disk $\D_{\e}(0,\rho)$ by a weight $\e^{-2mR} W$
on the disk $\D(0,\rho)$.
In particular, it is important that the real-analytic function $W$ is strictly 
positive in a fixed neighborhood of the unit circle $\T$, 
so that $\log W$ is a real-analytic function
in the same region.
A natural approach to the computations is to write 
\[
\varpi^{\mathrm{I}}_{s,t}=2\Re h_s\circ\psi_{s,t}
-\frac{2}{s}\big(R\circ\psi_{s,t}-\tfrac12 t^2\big)
+\log\big(\bar\zeta\partial_t\psi_{s,t}\overline{\psi_{s,t}'(\zeta)}\big)
\]
which is essentially the expression which gets expanded in \cite{HW},
and introduce the modification $\varpi_{s,t}^{\mathrm{II}}=\log W\circ\psi_{s,t}$, 
since then 
$\varpi_{s,t}=\varpi_{s,t}^{\mathrm{I}}+\varpi_{s,t}^{\mathrm{II}}$.
If we introduce the notation
\[
\hat{\varpi}_{j,l}^{\mathrm{I}}=
\frac{\partial_{s}^j\partial_t^l\hat{\varpi}_{j,l}^{\mathrm{I}}}{j!l!}
\Big\vert_{s=t=0},
\qquad \hat{\varpi}_{j,l}^{\mathrm{II}}
=\frac{\partial_{s}^j\partial_t^l\hat{\varpi}_{j,l}^{\mathrm{II}}}{j!l!}
\Big\vert_{s=t=0},
\]
it follows immediately that $\hat{\varpi}_{j,l}^{\mathrm{I}}$ are as in 
Proposition~4.12.1 of \cite{HW}, while the latter may be computed explicitly
using the multivariate Fa{\`a} di Bruno formula \cite{bruno}. The leading 
behavior of $\hat{\varpi}_{j,l}$ comes from the contribution of 
$\hat{\varpi}_{j,l}^{\mathrm{I}}$, which is as in \cite{HW},
while it is easily verified that $\hat{\varpi}_{j,l}^{\mathrm{II}}$ for $(j,l)\in
\indsett_{2\kappa}$ is an expression
in terms of the coefficient functions $\hat{\psi}_{p,q}$ with $p\le j$ and 
$q\le l$ and of some partial derivatives of $\log W$. But then in particular
$(p,q)\in\indsett_{2\kappa}$ with $(p,q)\prec(j,l+1)$.
It is now immediate that the remainders $\frakF_{j,l,W}$ have the indicated 
properties.

\medskip

\noindent{\sc Sketch of the algorithm.}
We sketch the solution algorithm below, and indicate where 
the necessary modifications to the corresponding steps in \cite{HW} 
are required.
We need the  notation $\Gamma_t$ for the loop with  
\[
R(z)=\tfrac{1}{2}t^2,\qquad z\in\Gamma_t.
\]
The loop is a perturbation of the unit circle $\T$ for $t$ close to $0$. 
However, for $t\ne0$ there are two such loops, one which is inside the 
circle $\T$, and one which is outside. For $t>0$ we choose the outside loop, 
whereas for $t<0$ we instead choose the inside loop. This way, the 
domain enclosed by $\Gamma_t$ grows with $t$.  

\medskip

\noindent{\sc Step 1.} Take as above $\psi_{0,t}$ to be the conformal 
mapping $\psi_{0,t}:\D\to D_{t}$, where $\psi_{0,t}(0)=0$ and $\psi_{0,t}'(0)>0$,
and $D_{t}$ denotes the bounded domain enclosed by the curve $\Gamma_t$.
It follows from the smoothness of the flow of the level curves $\Gamma_t$ 
(for details see Proposition~4.2.5 in \cite{HW}) that we have an expansion 
\[
\psi_{0,t}=\sum_{l=0}^{+\infty}t^l\hat{\psi}_{0,l},
\]
which determines the coefficient functions $\hat{\psi}_{0,l}$ 
for all $l=0,1,2,\ldots$ (see Proposition~4.6.1 \cite{HW}).
In particular, we have 
\begin{equation}\label{eq:def-hat-psi-01}
\Re(\bar\zeta \hat{\psi}_{0,1}(\zeta))=(4\hDelta R(\zeta))^{-\frac12},\qquad
\zeta\in\T.
\end{equation}

\medskip

\noindent{\sc Step 2.} The equation \eqref{eq:main-flow} 
for $(j,l)=(0,0)$ together with \eqref{eq:def-hat-psi-01} gives that
\[
2\Re b_0(\zeta)-\frac12\log(4\hDelta R(\zeta))\,
+\log W(\zeta)=-\frac12\log (4\pi),\qquad \zeta\in\T,
\]
which in its turn determines $b_{0}$. Indeed, the only holomorphic 
function which is real-valued at the origin and meets this equation is 
\begin{equation}
\label{eq:b0formula}
b_{0}=
-\frac14\log(4\pi)+\frac14\Hop_{\D}\big[\log(4\hDelta R)-2\log W\big]
\quad\text{on }\,\,\T,
\end{equation}
where the Herglotz operator $\Hop_{\D}$ is given by
\[
\Hop_\D [f](z):=\int_\T \frac{1+\bar w z}{1-\bar w z}\,f(w)\,\diffs(w),\qquad
z\in\D.
\]
The definition of the Herglotz operator is extended to the boundary $\T$ via 
nontangential boundary values, and, whenever possible, across the circle $\T$
by analytic continuation. 
As a consequence of the assumptions on the potential $Q$ and the conformal 
factor $V$ as well as the regularity of the conformal mapping 
$\varphi_{\tau,w_0}$, the function $W$ is real-analytic and positive 
in a neighborhood of $\T$, and moreover the pair $(R,W)$
meets the regularity requirements of Definition \ref{def:class-W-mod}.
This means that essentially we are in the same setting as explained in 
\cite{HW}. 
For instance, we may conclude that the function $b_{0}$ given by 
\eqref{eq:b0formula} extends as a bounded holomorphic function on a disk
$\D(0,\hdelta_1)$ with radius $\hdelta_1>1$.

\medskip

\noindent We proceed to Step 3 with $j_0=1$. We note that 
$(j,l)\in\indsett_{2\kappa+1}$ entails that $0\le j\le\kappa$.

\medskip

\noindent{\sc Step 3.} 
To begin with, we have an integer $1\le j_0\le\kappa$ 
for which we have already successfully determined the coefficient functions 
$b_j$ for $0\le j\le j_0-1$ as well as $\hat{\psi}_{j,l}$ for all 
$(j,l)\in\indsett_{2\kappa+1}$ with $(j,l)\prec(j_0,0)$. Note that is known
to be so for $j_0=1$, by Steps 1 and 2. 
In this step, we intend to determine all the coefficient functions 
$\hat{\psi}_{j,l}$ with $(j,l)\in\indsett_{2\kappa+1}$ and 
$(j,l)\prec(j_0+1,0)$, and keep track of the equations 
\eqref{eq:main-flow} with $(j,l)\in\indsett_{2\kappa}$ that get solved
along the way. The induction hypothesis includes the assumption that 
the system of equations \eqref{eq:main-flow} holds for all 
$(j,l)\in\indsett_{2\kappa}$ with $l\ge1$ and $(j,l)\prec(j_0-1,1)$ (this
is vacuous for $j_0=1$), as well as for $(j,l)$ with $0<j<j_0$ and $l=0$.
 
We need only find $\hat{\psi}_{j,l}$ for $(j,l)\in\indsett_{2\kappa+1}$ 
with $j=j_0$. We do this by induction in the parameter $l$, 
starting with $l=0$.
Assume for the moment that it has been carried out for all $l=0,\ldots,l_0-1$.
The coefficient $\hat{\psi}_{j_0,l_0}$ which we are looking for appears as 
the leading term in the equation \eqref{eq:hatphi-det} corresponding to 
$(j,l)=(j_0-1,l_0+1)$, which reads
\[
-2(4\hDelta R)^{\frac12}\Re(\bar{\zeta}\hat{\psi}_{j_0,l_0})+\frakF_{j_0-1,l_0+1}=0
\quad\text{on }\,\T,
\]
where $\frakF_{j_0-1,l_0+1}$ is an expression in the already determined
coefficient functions, by property ($\frakF$-i).
We solve for the coefficient function $\hat{\psi}_{j_0,l_0}$ in terms of the
Herglotz operator:
\[
\hat{\psi}_{j_0,l_0}=\tfrac12\zeta\Hop_{\D}
\bigg[\frac{\frakF_{j_0-1,l_0+1,W}}{(4\hDelta R)^{\frac12}}\bigg].
\]
As for the first step $l_0=0$, the above formula applies in that case as well.
What is important is that then, the function $\frakF_{j_0-1,1,W}$ only depends 
data known at the beginning of Step 3, in view of ($\frakF$-i). By the 
correspondence \eqref{eq:index-set2.0} and the induction hypothesis, we have 
made sure that the system of equations \eqref{eq:main-flow} holds for all 
pairs $(j,l)\in\indsett_{2\kappa}$ with $l\ge1$ and $(j,l)\prec(j_0,1)$ 
as well as for $(j,l)$ with $0<j<j_0$ and $l=0$. This completes Step 3. 

\medskip

\noindent{\sc Step 4.} 
After having completed Step 3, we find ourselves in the following situation: 
the coefficient functions $b_j$ are known for $j< j_0$, while $\hat{\psi}_{j,l}$ 
are all known for $(j,l)\in\indsett_{2\kappa+1}$ with $(j,l)\prec(j_0+1,0)$. 
We proceed to determine $b_{j_0}$ using the equation \eqref{eq:Bj-det} with 
index $(j,l)=(j_0,0)$. That equation asserts that 
\[
2\Re b_{j_0}+\frakF_{j_0,0,W}=0\quad\text{on }\,\,\,\T,
\]
where $\frakF_{j_0,0,W}$ depends on the known data, by property ($\frakF$-ii). 
We solve for $b_{j_0}$ using the formula
\[
b_{j_0}=-\tfrac12\Hop_{\D_\e}[\frakF_{j_0,0}],
\]
and we get a function $b_{j_0}$ 
with $b_{j_0}(0)\in\R$. In view of Step 3, this choice makes sure that 
\eqref{eq:main-flow} holds for all pairs $(j,l)$ with $l=0$ and $0<j\le j_0$, 
as well as for all $(j,l)\in\indsett_{2\kappa}$ with $l\ge1$ and 
$(j,l)\prec(j_0,1)$. 
This completes Step 4, and we have extended the set of known data so 
that we may proceed to Step 3 with $j_0$ replaced with $j_0+1$.
Here, we should insert a word on smoothness. If $f$ is real-analytically 
smooth along $\T$, then $\Hop_\D[f]$ gets to be holomorphic in a larger disk 
$\D(0,\hdelta)$ for some $\hdelta>1$. Hence real-analytic smoothness carries 
over to the next step in the iterative procedure.
 
\medskip

The above algorithm continues until all the unknowns 
have been determined, up to the point where the whole index set
$\indsett_{2\kappa+1}$ has been exhausted. 
In the process, we have in fact solved the system of equations 
\eqref{eq:main-flow} for all indices $(j,l)\in \indsett_{2\kappa}$.
This means that if we form $h_s$ and $\psi_{s,t}$ in terms of the functions 
$b_j$ and $\hat{\psi}_{j,l}$ obtained with the above algorithm, and put 
$f_s=\exp(h_s)$, we find by exponentiating the Taylor series expansion 
of $\varpi_{s,t}$ in the parameters $s$ and $t$ that the flow equation 
\eqref{eq:flow-eq-W} holds. This completes the sketch of the proof of the 
lemma.
\end{proof}

\subsection{Orthogonal polynomial asymptotics for a 
general area form}

The results concerning root functions for more general area forms
apply also to the setting of orthogonal polynomials.
Although many things are pretty much the same, we make an effort to explain
what the precise result is in this context.

We need an appropriate notion of admissibility of the potential $Q$ which 
applies to the setting of orthogonal polynomials.
We recall that the spectral droplet $\calS_\tau$ is the contact set
\[
\calS_\tau=\{z\in\C:\,\hat Q_\tau(z)=Q(z)\},
\]
which is typically compact, where $\hat Q_\tau$ is the function
\[
\hat Q_\tau(z):=\sup\big\{q(z):\,q\in\mathrm{SH}(\C), \,\,q\le Q\,\,\,\text{on}
\,\,\,\C,\,\,\,q(z)\le\tau\log(|z|+1)+\Ordo(1)\big\}. 
\]

\begin{defn}
We say that the potential $Q$ is 
$\tau$-admissible if the following conditions are met:
\begin{enumerate}[(i)]
\item $Q:\C\to\R$ is $C^2$-smooth, 
\item $Q$ meets the growth bound
$$
\tau_Q:=\liminf_{|z|\to+\infty}\frac{Q(z)}{\log|z|}>\tau>0,
$$
\item The unbounded component $\Omega_\tau$ of the complement of the
spectral droplet $\calS_\tau$ is simply connected on the Riemann sphere 
$\hat{\C}$, with real-analytic Jordan curve boundary,
\item $Q$ is strictly subharmonic and real-analytically smooth in a 
neighborhood of the boundary $\partial\Omega_\tau$.
\end{enumerate}
\end{defn}

As before, we consider measures $\e^{-2mQ}V\diffA$, where the conformal factor
$V$ is assumed to be nonnegative, positive near the curve 
$\partial\Omega_\tau$, and real-analytically smooth in a neighborhood of 
$\partial\Omega_\tau$ with at most polynomial growth or decay at infinity 
\eqref{eq:polgrowthV}.
We denote by $\phi_\tau$ the surjective conformal mapping
\[
\phi_\tau:\Omega_\tau\to\D_\e,
\]
which preserves the point at infinity and has $\phi_\tau'(\infty)>0$. 
The function $\calQ_\tau$ is defined as the bounded 
holomorphic function on $\Omega_\tau$ whose real part equals $Q$ on 
the boundary $\partial\Omega_\tau$, and whose 
imaginary part vanishes at infinity.

The orthogonal polynomials $P_{m,n,V}$ have degree $n$, positive leading 
coefficient, and unit norm in 
$A^2_{mQ,V}$. They have the additional property that
\[
\langle P_{m,n,V},P_{m,n',V}\rangle_{mQ,V}=0,\qquad n\ne n'.
\]
We will work with $\tau=\frac{n}{m}$.

\begin{thm}\label{thm:twist-onp}
Suppose $Q$ is $\tau$-admissible for $\tau\in I_0$, where $I_0$ is a compact 
interval of the positive half-axis. Suppose in addition that $V$ meets the
above regularity requirements.
Given a positive integer $\kappa$ and a positive real $A$, there exists
a neighborhood $\Omega^{\circledast}_{\tau}$ of the closure
of $\Omega_{\tau}$ and bounded holomorphic functions $\calB_{j,\tau,V}$
on $\Omega^{\circledast}_{\tau}$, as well as 
domains $\Omega_{\tau,m}=\Omega_{\tau,m,\kappa,A}$ with 
$\Omega_{\tau}\subset\Omega_{\tau,m}\subset
\Omega^{\circledast}_{\tau}$ which meet
\[
\mathrm{dist}_{\C}(\Omega_{\tau,m}^c,
\Omega_{\tau})\ge Am^{-\frac12}(\log m)^{\frac12},
\]
such that the orthogonal polynomials enjoy the expansion
\[
P_{m,n,V}(z)
=m^{\frac14}(\phi_{\tau}'(z))^{\frac12}(\phi_{\tau}(z))^n
\e^{m\calQ_{\tau}}
\Big\{\sum_{j=0}^{\kappa}m^{-j}\calB_{j,\tau,V}(z)+
\Ordo\big(m^{-\kappa-1}\big)\Big\},
\]
on $\Omega_{\tau,m}$ as $n=\tau m\to+\infty$ while $\tau\in I_0$,
where the error term is uniform.
Here, the main term $\calB_{0,\tau,V}$ is zero-free and smooth up to the 
boundary on $\Omega_{\tau}$, positive at infinity, with prescribed modulus
\[
\lvert \calB_{0,\tau,V}(\zeta)\rvert=\pi^{-\frac14}[\hDelta Q(\zeta)]^{\frac14}
V(\zeta)^{-\frac12},\qquad\zeta\in\partial\Omega_\tau.
\]
\end{thm}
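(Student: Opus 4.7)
The plan is to follow the strategy developed in \cite{HW} for orthogonal polynomial asymptotics, with two modifications: canonical positioning uses the exterior conformal mapping $\phi_\tau\colon\Omega_\tau\to\D_\e$ onto the exterior disk, and the area form carries the conformal factor $V$. The latter is absorbed by invoking the more general foliation flow of Lemma~\ref{lem:main-flow-general} in place of its $W\equiv 1$ counterpart; apart from this, there are no genuinely new ideas.

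First, I would set up canonical positioning. The operator
\[
\Vop_{m,n}[v](z):=\phi_\tau'(z)\,(\phi_\tau(z))^n\,\e^{m\calQ_\tau(z)}\,v(\phi_\tau(z))
\]
is an isometry from $L^2$ against $\e^{-2mR_\tau}W_\tau$ on a neighborhood of $\overline{\D_\e}$ to $L^2(\e^{-2mQ}V,\Omega_\tau)$, where $R_\tau:=(Q-\breve Q_\tau)\circ\phi_\tau^{-1}$ and $W_\tau:=V\circ\phi_\tau^{-1}$. The $\tau$-admissibility of $Q$ together with the regularity of $V$ (the polynomial bounds \eqref{eq:polgrowthV}, the curvature bound \eqref{eq:subharm-V}, and real-analyticity near $\partial\Omega_\tau$) imply that, after the inversion $\nu(\zeta)=\zeta^{-1}$, the pulled-back pair $(R_\tau\circ\nu,W_\tau\circ\nu)$ lies in a uniform family of $\mathfrak{W}_\circledast(\hdelta_0,\sigma_0)$, uniformly for $\tau\in I_0$; this is the analog of Lemma~\ref{lem:R-weight} in the exterior setting.

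Second, I would apply Lemma~\ref{lem:main-flow-general} with $s=m^{-1}$ to obtain holomorphic functions $f_{m,n,V}^{\langle\kappa\rangle}=\sum_{j=0}^{\kappa}m^{-j}B_{j,\tau,V}$ and conformal mappings $\psi_{m,n,t}$ satisfying the flow identity \eqref{eq:flow-eq-W}. Setting
\[
\calB_{j,\tau,V}:=(\phi_\tau')^{1/2}\,(B_{j,\tau,V}\circ\nu)\circ\phi_\tau,\qquad P_{m,n,V}^{\langle\kappa\rangle}:=m^{1/4}\,\Vop_{m,n}\big[f_{m,n,V}^{\langle\kappa\rangle}\circ\nu\big]
\]
produces a candidate of the form required by the theorem. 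The prescribed boundary modulus $|\calB_{0,\tau,V}|=\pi^{-1/4}(\hDelta Q)^{1/4}V^{-1/2}$ on $\partial\Omega_\tau$ follows from $|B_0|=\pi^{-1/4}(\hDelta R)^{1/4}W^{-1/2}$ on $\T$ supplied by the lemma, after a short chain-rule computation using the quadratic flatness of $R_\tau\circ\nu$ on $\T$. Combining the flow identity with the mean value property applied to $q_{m,n}\circ\psi_{m,n,t}$, mimicking the computation \eqref{eq:comput-fol-coord}--\eqref{eq:flowcalc102}, shows that $\chi_0\,P_{m,n,V}^{\langle\kappa\rangle}$ has unit norm up to $\Ordo(m^{-\kappa-1/3})$ in $L^2_{mQ,V}$ and is approximately orthogonal in $L^2_{mQ,V}$ to every polynomial of strictly lower degree; here $\chi_0$ is a smooth cut-off supported in a fixed neighborhood of the closure of $\Omega_\tau^{\mathrm c}$.

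Third, a $\bar\partial$-correction promotes this cut-off approximation to an actual polynomial, and the main obstacle is to then pass from the resulting $L^2$-control to pointwise asymptotics. I would solve
\[
\bar\partial u = P_{m,n,V}^{\langle\kappa\rangle}\,\bar\partial\chi_0
\]
using a H\"ormander-type $L^2$-bound against the weight $\e^{-2m\hat Q_\tau}V$; the inequality \eqref{eq:subharm-V} is exactly what ensures that $\hDelta(mQ-\tfrac12\log V)\ge c\,m\,\hDelta Q$ for large $m$, so the classical H\"ormander bound still goes through. The solution $u$ is exponentially small in $L^2_{mQ,V}$ because $\bar\partial\chi_0$ is supported where $Q-\breve Q_\tau$ is bounded below by a positive constant, while the growth bound $\tau_Q>\tau$ forces $u(z)=\Ordo(|z|^{n-1})$ at infinity; consequently $\chi_0\,P_{m,n,V}^{\langle\kappa\rangle}-u$ is entire with polynomial growth of degree at most $n$. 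Subtracting its orthogonal projection onto the span of polynomials of degree $<n$ (exponentially small by approximate orthogonality) and renormalizing to unit norm with positive leading coefficient yields the true orthogonal polynomial $P_{m,n,V}$, which differs from $\chi_0\,P_{m,n,V}^{\langle\kappa\rangle}$ by $\Ordo(m^{-\kappa-1/3})$ in $L^2_{mQ,V}$. Finally, to upgrade this to the pointwise expansion on the enlarged domain $\Omega_{\tau,m}$, I would invoke the analog of Proposition~\ref{prop:growth}, which costs a factor of size $m^{1/2}\e^{m\hat Q_\tau}$; on $\Omega_{\tau,m}$ we have $m(\hat Q_\tau-\breve Q_\tau)\le A^2 D\log m$, so the cost is only polynomial in $m$ and is absorbed by starting the whole argument from a larger initial $\kappa$.
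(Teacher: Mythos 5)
Your outline follows the paper's strategy almost step for step: canonical positioning via the exterior map (with the inversion $\nu(\zeta)=\zeta^{-1}$ to land in the interior-disk normalization), the conformal factor absorbed by Lemma~\ref{lem:main-flow-general} applied to a uniform family in $\mathfrak{W}_\circledast(\hdelta_0,\sigma_0)$, the norm and approximate-orthogonality computations via the flow identity and the mean value property, and the final passage from $L^2$ to pointwise control by enlarging $\kappa$. The boundary-modulus computation for $\calB_{0,\tau,V}$ is also correct. Up to the $\bar\partial$-step, this is the paper's proof.

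The gap is in the $\bar\partial$-correction. You propose to run H\"ormander's estimate against the weight $\e^{-2m\hat Q_\tau}V=\e^{-2(m\hat Q_\tau-\frac12\log V)}$ and justify it by noting that \eqref{eq:subharm-V} gives $\hDelta\big(mQ-\tfrac12\log V\big)\ge c\,m\,\hDelta Q$ for large $m$. That inequality only controls the weight on the coincidence set, where $\hat Q_\tau=Q$. Off the droplet, $\hat Q_\tau$ is harmonic, so $\hDelta\big(m\hat Q_\tau-\tfrac12\log V\big)=-\tfrac12\hDelta\log V$, and \eqref{eq:subharm-V} does not force this to be nonnegative (off $\calS_\tau$ one may well have $\hDelta Q>0$ and hence $\hDelta\log V>0$ is permitted). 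H\"ormander's theorem requires the weight to be subharmonic on all of $\C$, not merely on the support of the datum $P^{\langle\kappa\rangle}_{m,n,V}\,\bar\partial\chi_0$, so the assertion that ``the classical H\"ormander bound still goes through'' is unjustified as stated. The paper's resolution is different: it sets $Q_m=Q-\frac{1}{2m}\log V$, solves the obstacle problem for $Q_m$ to get $\hat Q_{\tau,m}$ (which is subharmonic by construction, with $\hDelta\hat Q_{\tau,m}\ge0$ everywhere), and runs H\"ormander's estimate against $\e^{-2m\hat Q_{\tau,m}}$; the price is that one must know the new coincidence set $\calS_{\tau,m}$ stays within $\Ordo(m^{-1})$ of $\calS_\tau$, which is supplied by the quantitative free-boundary stability of \cite{ss-quant}. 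You also need the growth bounds \eqref{eq:polgrowthV} at this point to ensure $Q_m$ still has the growth needed for the degree bound $u(z)=\Ordo(|z|^{n-1})$. Finally, a small slip: the cut-off $\chi_0$ should equal $1$ on a neighborhood of $\overline{\Omega_\tau}$ and vanish on a compact subset of the interior of $\calS_\tau$, so that $\bar\partial\chi_0$ is supported in the bulk; ``supported in a neighborhood of the closure of $\Omega_\tau^{\mathrm c}$'' says the opposite of what you need.
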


In view of Lemma~\ref{lem:main-flow-general}, the construction of 
approximately orthogonal quasi-polynomials may be carried out in the same way
as in the case when $V=1$. 

\subsection{$\bar\partial$-surgery in the context of 
more general area forms}
As for the $\bar\partial$-surgery technique in the presence of a 
conformal factor $V$,
we need to mention some modifications that are required. We recall 
that $V$ meets the
conditions \eqref{eq:polgrowthV} and \eqref{eq:subharm-V} and put 
\[
Q_m=Q-\frac{1}{2m}\log V.
\] 
We note that by \eqref{eq:subharm-V}, the modified potential $Q_m$ is 
subharmonic and that 
$Q_m$ is strictly subharmonic uniformly in a neighborhood of 
$\partial\Omega_\tau$, 
provided that $m$ is large enough. Here, $\Omega_\tau$ is the unbounded 
component of the off-spectral set $\calS_\tau^c$. 
Moreover, it follows from the growth and decay bounds 
\eqref{eq:polgrowthV} that $Q_m$ meets the required growth bound for 
large enough $m$.
We consider the solution $\hat{Q}_{\tau,m}$ to the obstacle problem
\[
\hat{Q}_{\tau,m}(z)=\sup\big\{u(z):u\in\mathrm{SH}_\tau(\C),\, u(z)\le 
Q_m(z)\text{ on }\C\big\}.
\]
The standard regularity theory applies, and implies that $\hat{Q}_{\tau,m}$ 
is $C^{1,1}$-smooth.
By our smoothness assumptions on $\calS_\tau$, as well as the assumed 
regularity of the potential $Q$ and the conformal factor $V$, the 
coincidence set
\[
\calS_{\tau,m}:=\big\{z\in\C: \hat{Q}_{\tau,m}(z)=Q(z)\big\}
\]
moves only very little away from $\calS_\tau$ for large $m$.
Indeed, by the main theorem of \cite{ss-quant}, the free boundary moves with
a smooth normal velocity under a smooth perturbation, 
and it follows that $\partial\calS_{\tau,m}$ 
is contained in a $\Ordo(m^{-1})$-neighborhood of $\partial\calS_\tau$.
We note that $\hat{Q}_{\tau,m}$ is automatically harmonic off $\calS_{\tau,m}$
and that $\hDelta \hat{Q}_{\tau,m}\ge 0$ holds generally.

We now apply H{\"o}rmander's classical $\bar\partial$-estimate with the 
obstacle solution $\hat{Q}_{\tau,m}$ as potential. 
Let $f\in L^\infty(\calS_{\tau,m})$, which vanishes whenever 
$\hDelta\hat{Q}_{\tau,m}=\hDelta Q_m=0$ on $\calS_{\tau,m}$. 
Then there exists a solution $u$ to the problem
\[
\bar\partial u=f
\]
which meets the $L^2$-bound
\[
\int_\C |u|^2\e^{-2m\hat{Q}_{\tau,m}}\diffA\le \frac{1}{2m}\int_{\calS_{\tau,m}}|f|^2
\frac{\e^{-2m Q}V}{\hDelta Q-\frac{1}{2m}\hDelta \log V}\diffA,
\]
provided that the right-hand side is finite.
Moreover, since $u$ is holomorphic off the compact set $\calS_{\tau,m}$,
this estimate implies a polynomial growth bound $u(z)=\Ordo(|z|^{n-1})$ as 
$|z|\to+\infty$.

After these modifications, the $\bar\partial$-surgery may be performed 
as in the earlier context of the planar area measure $\diffA$. For the 
details, we refer to Section~4.9 in \cite{HW} and the comments in 
Section~\ref{ss:d-bar} above.

\end{document}